\title{Vector bundles on Fano varieties of genus ten}
\author{ Micha\l\ Kapustka\qquad}
\address{Institute of Mathematics, Jagiellonian University of Krak\'ow,
ul.\L ojasiewicza 6, 30-348 Krak\'ow, Poland}
\email{Michal.Kapustka@im.uj.edu.pl}
\thanks{This project was completed while the first author visited
University of Oslo supported by an EEA Scholarship and Training fund
in Poland}
\author{ Kristian Ranestad}
\address{Department of Mathematics, University of Oslo,
PB 1053 Blindern, 0316 Oslo, Norway}
\email{ranestad@math.uio.no}
\keywords{Moduli of vector bundles, Fano varieties,  K3 surfaces}
\subjclass[1991]{Primary: 14D20; Secondary: 14J45, 14J28}
 \theoremstyle{plain} \newtheorem{thm}{Theorem}[section]
 \newtheorem{lemm}[thm]{Lemma}
\newtheorem{prop}[thm]{Proposition}
\newtheorem{cor}[thm]{Corollary}
\theoremstyle{definition}
\newtheorem{rem}[thm]{Remark}
 \numberwithin{equation}{section}
\numberwithin{equation}{section}
\theoremstyle{definition}
\begin{document}
\begin{abstract}In this note we describe a unique linear embedding of a prime Fano 4-fold $F$
    of genus 10 into the Grassmannian $G(3,6)$. We use this to construct some moduli spaces of bundles on sections of $F$.
    In particular the moduli space of bundles with Mukai vector $(3,L,3)$ on a generic polarized K3 surface $(S,L)$ is constructed as a
    double cover of $\mathbb{P}^2$ branched over a smooth sextic.

\end{abstract}
\maketitle
\section{Introduction}
Mukai showed that a polarized Fano $4$-fold $(F,L)$ of genus $10$ and index $2$ has a unique linear embedding as a hyperplane
section of the homogeneous variety $G_2\subset \mathbb{P}^{13}$,  the closed
orbit of the adjoint representation of the
simple Lie Group $\mathbb{G}_2$ \cite[ch 22]{FH}.  This is part of his famous linear section
theorem \cite [thm 2]{Mu1} and \cite {Mu2}, on smooth varieties whose general linear curve section
is a canonical curve of genus at most $10$.
Kuznetsov \cite[sec 6.4, App. B]{Kuz} showed that every hyperplane section of $G_{2}$ admits a pair of possibly isomorphic vector bundles
of rank $3$ with $6$ independent sections.
In this paper we show
\begin{thm}\label{embedding} Any smooth hyperplane section of $G_2$
admits a unique linear embedding as a linear section in the Grassmannian $G(3,6)$.
\end{thm}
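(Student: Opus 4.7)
The strategy is to realise the embedding via Kuznetsov's rank-$3$ bundle $\mathcal{E}$ on $F$ with $h^{0}(\mathcal{E})=6$. After showing $\mathcal{E}$ is globally generated (a cohomological check which I would carry out using Kuznetsov's explicit description, or by restricting a globally generated sheaf from $G_{2}$), the evaluation map $H^{0}(\mathcal{E})\otimes \mathcal{O}_{F}\twoheadrightarrow\mathcal{E}$ defines a morphism
\[
\phi_{\mathcal{E}}:F\longrightarrow G(3,6),\qquad x\longmapsto\bigl(H^{0}(\mathcal{E})\twoheadrightarrow \mathcal{E}(x)\bigr),
\]
with $\phi_{\mathcal{E}}^{*}\mathcal{O}_{G(3,6)}(1)=\det\mathcal{E}=L$, the polarisation of $F$.

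Next I would prove that $\phi_{\mathcal{E}}$ is a closed immersion by using the six sections to separate points and tangent vectors; this reduces, via the well-understood family of lines on $F$, to a splitting-type computation for $\mathcal{E}|_{\ell}$ on each line $\ell\subset F$. Because $\phi_{\mathcal{E}}^{*}\mathcal{O}(1)=L$ and $h^{0}(F,L)=13$, the image spans a $\mathbb{P}^{12}\subset \mathbb{P}^{19}$, so $\phi_{\mathcal{E}}(F)\subseteq G(3,6)\cap \mathbb{P}^{12}$. The main obstacle is showing equality here: a generic $\mathbb{P}^{12}$ cuts $G(3,6)$ in a surface, so the ambient $\mathbb{P}^{12}$ is highly non-generic. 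I would argue equality by a degree comparison (using $L^{4}=18$) combined with the irreducibility of $F$ and a careful analysis of the Pl\"ucker equations restricted to $\mathbb{P}^{12}$, ruling out excess or lower-dimensional components of the linear section that could spoil the identification.

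For uniqueness, any linear embedding $\psi:F\hookrightarrow G(3,6)$ gives, by pullback of the tautological quotient bundle, a rank-$3$ bundle on $F$ with six global sections and Chern data matching those of $\mathcal{E}$; by Kuznetsov's classification it must coincide with $\mathcal{E}$ or its dual partner, so $\psi$ agrees with $\phi_{\mathcal{E}}$ up to the action of $\mathrm{PGL}(6)$ on $G(3,6)$. The two possibilities in Kuznetsov's pair are interchanged by the Grassmannian duality $G(3,6)\simeq G(3,6)$, $V\mapsto V^{\perp}$, which identifies their images as the same linear section; this yields a single linear embedding and completes the argument.
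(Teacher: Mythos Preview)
Your bundle-theoretic outline is a natural strategy, but it differs substantially from the paper's route and has a genuine gap in the uniqueness step.

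\textbf{Uniqueness.} You write that any rank-$3$ bundle on $F$ with $c_{1}=L$ and six sections ``by Kuznetsov's classification \ldots\ must coincide with $\mathcal{E}$ or its dual partner.'' Kuznetsov's result, as cited here, only \emph{constructs} a pair of such bundles; it does not classify all rank-$3$ bundles with these invariants. Proving that these two are the only ones is essentially the content of the uniqueness assertion, and you are assuming it. The paper establishes this by an entirely different mechanism: it analyses the Hilbert scheme of conic sections on $F$ and shows that any linear embedding $\psi:F\hookrightarrow G(3,U)$ produces, via the intersections $\psi(F)\cap G(3,W)$ and $\psi(F)\cap F(u,3,U)$, two canonical projections $H_{F}\to\mathbb{P}(U)$ and $H_{F}\to\mathbb{P}(U^{*})$. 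These projections are then shown to be intrinsic to $F$ (the graph of a fixed Cremona transformation between two $\mathbb{P}^{5}$'s blown up along Veronese surfaces), which forces $\psi^{*}Q$ to agree with $\phi^{*}Q$ or $\phi^{*}E^{*}$ section by section. Without an independent moduli argument for bundles on the $4$-fold $F$, your uniqueness step does not go through.

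\textbf{Existence.} Your plan here is closer to workable, but the step you flag as the ``main obstacle'' --- that $\phi_{\mathcal{E}}(F)=G(3,6)\cap\mathbb{P}^{12}$ --- is not handled by a degree comparison alone: since the $\mathbb{P}^{12}$ is far from generic, the linear section could a priori acquire extra components (and indeed the paper later shows that nearby $\mathbb{P}^{13}$'s cut $G(3,6)$ in $F$ together with a $\mathbb{P}^{1}\times\mathbb{P}^{1}\times\mathbb{P}^{1}$). The paper avoids this entirely: it uses the orbit structure of the adjoint representation to reduce to an explicit pencil of hyperplane sections, and then exhibits, for each member, a concrete $3\times3$ matrix of linear forms whose Grassmann quadrics generate exactly the ideal of $F$. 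This verifies the linear-section property directly at the level of equations rather than via an excess-intersection argument.

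In short: your existence sketch could plausibly be completed with more work on the image, but your uniqueness argument presupposes a classification of bundles on $F$ that is not available from Kuznetsov and is precisely what the paper's conic-scroll analysis is designed to supply.
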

Together with Mukai's theorem this means that any smooth Fano fourfold of genus 10 and index 2 has the above property.
We also note that the variety $G_2$ does not admit a linear embedding
into $G(3,6)$ (cf. Corollary \ref{G not on g36}).
As a consequence, we get explicit models for moduli spaces of vector bundles on linear sections of $F$.
In particular, we construct moduli spaces of vector bundles on generic
K3 surfaces and Fano 3-folds of genus 10.
More precisely, let $(S,L)$ be a general K3 surface and let $(X,L)$
be a general prime Fano 3-fold of genus 10. By Mukai's linear
section theorem \cite{Muk1} the surface $S$ and the 3-fold $X$ are complete codimension
$3$ (resp. $2$)  linear section of $G_2$. For a linear section $Y$ of
$G_{2}$, we let
$\Pi_{Y}\subset (\mathbb{P}^{13})^*$ be the linear space orthogonal to the
linear span of $Y$.  A point in $\Pi_{Y}$ is therefore represented by a
hyperplane section $F$ of $G_2$ that contains $Y$.
 Now a generic point $[F]$ on $\Pi_{Y}$ corresponds to a linear embedding
of $Y$ into $G(3,6)$ coming from the embedding of $F$.
When $Y$ is a general complete linear section of dimension at least
$1$, we show that the pullbacks of the two universal bundles on
$G(3,6)$ by this embedding are stable vector bundles.

Let $M_S(3,L,3)$ denote the moduli space of stable sheaves $E$ on $S$
with Mukai vector
$$({\rm rk}E,c_1(E),\frac {1}{2}c_{1}(E)^2-c_{2}(E)+{\rm rk}E)=({\rm rk}E,c_1(E),\chi(E)-{\rm rk}E)=(3,L,3).$$
 We show
\begin{thm}\label{MS}
 The space $M_S(3,L,3)$ is a double cover of the plane $\Pi_{S}$ branched over the
intersection of $\Pi_{S}$ with the sextic dual variety
$\hat{G_2}$.  In particular, $M_S(3,L,3)$ is a K3 surface with a genus
$2$ polarization.
\end{thm}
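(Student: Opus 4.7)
The strategy is to exhibit $M_S(3,L,3)$ as a degree $2$ cover of $\Pi_S\cong\mathbb{P}^2$ by constructing, for each generic hyperplane section $F\supset S$, a pair of bundles on $S$ lying in the moduli space, and then identifying the branch locus with $\hat{G_2}\cap\Pi_S$.

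Fix a generic $[F]\in\Pi_S$ corresponding to a smooth hyperplane section of $G_2$. By Theorem \ref{embedding} there is a unique linear embedding $F\hookrightarrow G(3,6)$; pulling back the two universal rank-$3$ bundles $\mathcal{U}^*$ and $\mathcal{Q}$ from $G(3,6)$ gives a pair $(E_+^F,E_-^F)$ of rank-$3$ bundles on $F$, each with six independent sections (Kuznetsov). Restricting to $S\subset F$ yields two rank-$3$ bundles on $S$; linearity of $F\hookrightarrow G(3,6)$ forces $c_1=L$, a Schubert computation gives $c_2=9$, and Riemann--Roch on $S$ then yields $\chi=6$, so the Mukai vector is $(3,L,3)$ as required. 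Combined with the stability stated in the introduction, this produces, after globalising over $\Pi_S$, a morphism $\Phi\colon\widetilde{\Pi}_S\to M_S(3,L,3)$, where $\widetilde{\Pi}_S\to\Pi_S$ denotes the natural double cover whose points are pairs consisting of a hyperplane section together with a choice of one of the two Kuznetsov bundles.

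The heart of the argument, and the main obstacle, is to invert $\Phi$. For $[E]\in M_S(3,L,3)$ generic, stability together with Serre duality gives $h^2(E)=0$, hence $h^0(E)\geq\chi(E)=6$; a generic global-generation argument then produces a classifying morphism $f_E\colon S\to G(3,H^0(E)^*)\cong G(3,6)$. The key claim is that $f_E(S)$ is contained in a unique subvariety $F_E\subset G(3,6)$ that arises via Theorem \ref{embedding} from a hyperplane section of $G_2$, so that $\Psi([E])\!:=[F_E]\in\Pi_S$ defines the sought inverse; here the uniqueness of $F_E$ exploits Corollary \ref{G not on g36}, which excludes the degenerate possibility that the $4$-fold structure extends to a linear embedding of $G_2$ itself across $S$. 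Once $\Psi$ is in hand, $\Phi$ is bijective on closed points, and, since $M_S(3,L,3)$ is a smooth K3 surface by Mukai's general theory, $\Phi$ is an isomorphism.

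Finally, the two bundles $E_+^F,E_-^F$ on $F$ become isomorphic precisely when the linear embedding $F\hookrightarrow G(3,6)$ degenerates, which a local analysis identifies with $F$ being a singular hyperplane section of $G_2$, i.e.\ $[F]\in\hat{G_2}$. Thus the branch locus of $\widetilde{\Pi}_S\to\Pi_S$, and hence of the composition $M_S(3,L,3)\cong\widetilde{\Pi}_S\to\Pi_S$, is $\Pi_S\cap\hat{G_2}$, a plane sextic since $\deg\hat{G_2}=6$; its smoothness for generic $(S,L)$ follows from Bertini, or alternatively from the smoothness of $M_S(3,L,3)$. The pullback of $\mathcal{O}_{\Pi_S}(1)$ under the resulting double cover has self-intersection $2$, yielding the claimed genus $2$ polarization on the K3 surface $M_S(3,L,3)$.
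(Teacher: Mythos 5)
Your first half (the pair of bundles attached to each smooth $[F]\in\Pi_S$, the Mukai-vector check, stability) matches the paper, but the step you yourself flag as ``the heart of the argument'' --- inverting $\Phi$ --- is a genuine gap, and it is also a detour the paper deliberately avoids. You assert, without proof, the key claim that for a generic $[E]\in M_S(3,L,3)$ the image $f_E(S)\subset G(3,6)$ lies on a unique fourfold $F_E$ arising from Theorem \ref{embedding}; Corollary \ref{G not on g36} does not deliver this, since it only rules out a linear embedding of $G_2$ itself and says nothing about whether the classifying image of $S$ extends to a genus-$10$ Fano fourfold linear section at all. That extension problem is hard: in the paper it is exactly the content of the proof of Theorem \ref{MX} in the threefold case, where it consumes the entire rank-$12$-quadric machinery of Section \ref{sec Geometry and general construction} (Proposition \ref{description of F as space in quadric of rank 12} and the lemma that all quadrics through $\langle F\rangle\cup G(3,6)$ have rank $12$) and even then yields only one irreducible component of the moduli space. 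The paper's proof of Theorem \ref{MS} sidesteps inversion entirely: since $L$ is indivisible, every semistable sheaf with vector $(3,L,3)$ is stable and $M_S(3,L,3)$ is a smooth K3 surface by \cite{MukK3}, in particular irreducible of dimension $2$; the constructed family is a $2$-dimensional algebraic family of pairwise non-isomorphic stable bundles (Propositions \ref{stability} and \ref{injectivity}), hence automatically an \emph{open} subset of the moduli space, and the resulting rational map $M_S(3,L,3)\dashrightarrow\Pi_S$, two-to-one over the complement of the sextic, extends to the double cover. No classifying-map argument for an arbitrary $E$ is needed.

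Your treatment of the branch locus is also incomplete. You say the two bundles coincide when the embedding ``degenerates,'' by ``a local analysis,'' but Theorem \ref{embedding} only applies to smooth hyperplane sections, so over $[F]\in\hat{G_2}\cap\Pi_S$ your $\widetilde{\Pi}_S$ is not even defined, and bijectivity of $\Phi$ over the branch locus is unsupported. The paper fills this in via \cite{Unpr}: a generic nodal hyperplane section projects from its node to a codimension-$2$ section of $LG(3,6)\subset G(3,6)$, and over such points the bundle is the pullback of the universal quotient bundle on $LG(3,6)$, which is isomorphic to the dual of the universal subbundle --- this is the precise reason the two sheets come together along $\hat{G_2}\cap\Pi_S$, and it simultaneously supplies the bundle on $S$ over branch points. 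Your Bertini argument for smoothness of the plane sextic is fine (using that ${\rm Sing}(\hat{G_2})$ has codimension two in $\hat{G_2}$, Remark \ref{dual to g2}), as is the genus-$2$ polarization at the end, but as written the proposal proves only that an open part of $M_S(3,L,3)$ maps two-to-one onto $\Pi_S\setminus\hat{G_2}$, not the theorem.
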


Thus $S$ and $M_S(3,L,3)$ is an explicit geometric example of a Mukai dual pair of $K3$ surfaces (cf. \cite{MukK3}).
Furthermore, let
$M_{X}(3,L,\sigma,2)$ be the moduli space of stable rank $3$ vector
bundles $E$ on $X$ with Chern classes $c_{1}(E)=L$, $c_{2}(E)=\sigma$
where $\sigma$ is the class of a curve of degree $9$ and genus 2 on $X$
and ${\rm deg}(c_{3}(E))=2$.
\begin{thm}\label{MX}
     The moduli space $M_{X}(3,L,\sigma,2)$ has an irreducible
     component $M_X$ that is a double cover of the line $\Pi_{X}$ branched over the
intersection of $\Pi_{X}$ with the sextic dual variety
$\hat{G_2}$.  In particular, $M_{X}$ is a genus
$2$ curve.
\end{thm}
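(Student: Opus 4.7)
The plan is to imitate the proof of Theorem~\ref{MS} one dimension down, now with $\Pi_X\simeq\mathbb{P}^1$. Every point $[F]\in\Pi_X$ represents a smooth hyperplane section $F$ of $G_2$ containing $X$; by Theorem~\ref{embedding}, $F$ has a unique linear embedding into $G(3,6)$. Pulling back the two universal rank $3$ bundles on $G(3,6)$ and restricting to $X\subset F$ produces two rank $3$ bundles $E_1([F])$ and $E_2([F])$ on $X$, each coming equipped with six sections inherited from $H^0(F,\cdot)$. By the stability statement announced after Theorem~\ref{embedding} for linear sections of dimension at least one, both bundles are stable, and a Schubert-calculus computation on the restriction $X\hookrightarrow G(3,6)$ identifies the invariants as $c_1=L$, $c_2=\sigma$ and $\deg c_3=2$, so they define points of $M_X(3,L,\sigma,2)$.

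I would next assemble these data into a double cover $\Phi\colon\widetilde{\Pi}_X\to\Pi_X$ parametrising pairs $([F],E_i([F]))$, together with a modular morphism $\widetilde{\Pi}_X\to M_X(3,L,\sigma,2)$. The covering involution of $\widetilde{\Pi}_X$ swaps the two universal bundles of $G(3,6)$, so its fixed locus is the set of $[F]$ for which $E_1([F])\simeq E_2([F])$. By the same mechanism as in the K3 case used for Theorem~\ref{MS}, this coincides with the locus where $F$ fails to admit two distinct linear $G(3,6)$-structures, which is exactly $\Pi_X\cap\hat{G_2}$. Since $\hat{G_2}$ is a sextic hypersurface, this intersection consists of $6$ points on the line $\Pi_X$, and Riemann--Hurwitz then forces $\widetilde{\Pi}_X$ to be a smooth curve of genus~$2$.

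Finally I would identify $\widetilde{\Pi}_X$ with an irreducible component $M_X\subset M_X(3,L,\sigma,2)$. The key step is a reconstruction: from a bundle $E$ obtained this way, the six-dimensional space $H^0(X,E)$ defines a morphism $X\to G(3,6)$ whose image should be contained in a unique smooth hyperplane section $F$ of $G_2$, thereby recovering $[F]\in\Pi_X$ and inverting $\Phi$. To promote the image to an open subset of a component, one also needs a deformation-theoretic check that $\mathrm{ext}^1(E,E)-\mathrm{ext}^2(E,E)=1$ at a generic $[E]$. The main obstacle is the reconstruction together with its uniqueness: the $G(3,6)$-embedding of $X$ must extend to a smooth hyperplane section $F$ of $G_2$, and that extension must be unique. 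This is precisely the point at which the uniqueness part of Theorem~\ref{embedding} does the heavy lifting, since it guarantees that the extension, if it exists, is pinned down by the embedding data on $X$ alone.
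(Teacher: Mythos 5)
Your first two steps (the pencil of pairs of bundles over $\Pi_X$, coincidence of the pair over the six points of $\Pi_X\cap\hat{G_2}$, genus $2$ by Riemann--Hurwitz) match the paper's setup, but the decisive step is wrong as proposed. You assign the ``heavy lifting'' in the reconstruction to the uniqueness part of Theorem \ref{embedding}, and that theorem cannot do this job: it presupposes the fourfold $F$ and says only that a \emph{given} smooth hyperplane section of $G_2$ embeds into $G(3,6)$ in essentially one way. It says nothing about whether an abstract bundle $E\in M_X(3,L,\sigma,2)$ --- equivalently, an embedded threefold $X\subset G(3,6)$ obtained from $\phi_E\colon\wedge^3H^0(X,E)\to H^0(X,L)$ --- \emph{extends} to such an $F$ at all. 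Existence of this extension is exactly what must be proved to show the genus $2$ family is dense in a component, and it is where the paper does its real work: for a bundle with $\phi_E$ surjective and defining an embedding (these two open conditions replace your deformation-theoretic criterion), one restricts to a generic K3 hyperplane section $S\subset X$, invokes the surface case to get the \emph{unique} Fano fourfold $F\supset S$ cut out by a $\mathbb{P}^{12}$, and then proves $X\subset F$ using the rank-$12$ quadric geometry of Section \ref{sec Geometry and general construction}: every quadric through $\langle F\rangle\cup G(3,6)$ has rank $12$, a general $\mathbb{P}^{14}$ containing $\langle F\cup X\rangle$ meets such a quadric in two $\mathbb{P}^{13}$'s, and a degree count (components of degree $18$ inside a total intersection of degree $42$, with the singular locus meeting $G(3,6)$ in a $\mathbb{P}^1\times\mathbb{P}^1\times\mathbb{P}^1$) derives a contradiction if $\langle F\cup X\rangle$ were $13$-dimensional. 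Nothing in your sketch substitutes for this argument, so the component statement is unproven.

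Two secondary points. First, $\mathrm{ext}^1(E,E)-\mathrm{ext}^2(E,E)=1$ only bounds the local dimension of the moduli space from below, so even granting it, the complete genus $2$ curve could a priori sit inside a larger component; you would need $\mathrm{ext}^1(E,E)=1$ (or an openness statement like the paper's) to close this. Second, at the six points of $\Pi_X\cap\hat{G_2}$ the hyperplane section $F$ is nodal, so Theorem \ref{embedding} does not apply there; the paper instead uses the projection from the node to $LG(3,6)$ (cf. \cite{Unpr}), where the universal quotient bundle is isomorphic to the dual of the universal subbundle, to see that the two bundles genuinely coincide over the branch points, making the family complete.
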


In the proof of Theorem \ref{embedding} we study the Hilbert scheme of conic sections
and cubic surface scrolls in $F$ and $G_2$.  In particular we observe
that if the line through two points on $G_2$ is not contained in $G_2$,
then there is a unique conic section in $G_2$ through the two points.
Combining this with J. Sawons and D. Markushevichs results on Lagrangian fibrations on
the Hilbert scheme of points on a K3 surface, see \cite{Sawon} and \cite{DM}, we prove
\begin{cor}\label{h2}  The Hilbert scheme $H(X)$ of conic sections on a generic Fano
    $3$-fold $X$ of  genus 10 is isomorphic to the Jacobian of the genus $2$ curve $M_{X}$.
    \end{cor}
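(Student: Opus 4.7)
\textbf{Proof plan for Corollary~\ref{h2}.}

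The strategy is to realize $H(X)$ as a Lagrangian fiber of a Beauville--Mukai type fibration on the Hilbert square of the K3 surface $M_S$, where $S\subset X$ is a general K3 hyperplane section. First, I construct a rational map directly on $S^{[2]}$ using the conic incidence geometry on $G_2$. Then I identify this map, via the Fourier--Mukai equivalence between $S$ and $M_S$, with the Beauville--Mukai system on $M_S^{[2]}$, at which point Sawon's and Markushevich's results pin down the generic fiber as a Jacobian.

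The observation preceding the corollary (two points of $G_2$ not joined by a line lie on a unique conic of $G_2$) yields, for generic $\{p,q\}\in S^{[2]}$, a conic $C_{p,q}\subset G_2$. Generically, the plane span of $C_{p,q}$ is not contained in $\mathrm{span}(S)\cong\mathbb{P}^{10}$, so the condition $C_{p,q}\subset F$ cuts out a line $\ell_{p,q}$ inside $\Pi_S\cong\mathbb{P}^2$, giving a rational map
\[
\phi\colon S^{[2]}\dashrightarrow\Pi_S^\vee\cong\mathbb{P}^2,\qquad\{p,q\}\mapsto[\ell_{p,q}].
\]
Under the embedding $H(X)\hookrightarrow S^{[2]}$ sending $C$ to $C\cap S$, a pair $\{p,q\}$ lies in $\phi^{-1}([\Pi_X])$ iff $C_{p,q}\subset\bigcap_{F\in\Pi_X}F=X$; hence $H(X)=\phi^{-1}([\Pi_X])$ set-theoretically.

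Now by Theorem~\ref{MS}, the morphism $M_S\to\Pi_S\cong\mathbb{P}^2$ is the double cover given by the genus-$2$ polarization $L'$ on $M_S$, so $\Pi_S=|L'|^*$; lines in $\Pi_S$ correspond to members of $|L'|$, and $[\Pi_X]\in\Pi_S^\vee$ corresponds to $[M_X]\in|L'|$ since $M_X$ is the preimage of $\Pi_X$ under $M_S\to\Pi_S$. Via the birational equivalence $S^{[2]}\dashrightarrow M_S^{[2]}$ induced by the Fourier--Mukai transform, $\phi$ is identified with the classical Beauville--Mukai Lagrangian fibration $\pi\colon M_S^{[2]}\to|L'|$ sending $\{r,s\}$ to the unique curve of $|L'|$ through $r,s$. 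By Sawon's and Markushevich's analysis (\cite{Sawon}, \cite{DM}), the fiber of $\pi$ over a smooth member $[C]\in|L'|$ is $\mathrm{Jac}(C)$. Combining,
\[
H(X)\;\cong\;\phi^{-1}([\Pi_X])\;\cong\;\pi^{-1}([M_X])\;\cong\;\mathrm{Jac}(M_X).
\]

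The main obstacle is the identification of $\phi$ with $\pi$ through the Fourier--Mukai duality between $S$ and $M_S$: one must translate the conic incidence geometry on $G_2$ into the language of moduli of bundles on $S$ in a way compatible with both fibrations. A secondary issue is to confirm smoothness and irreducibility of $H(X)$ and of the target fiber, so as to promote the set-theoretic identification to an isomorphism of schemes rather than a merely birational correspondence.
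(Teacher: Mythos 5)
Your first step is exactly the paper's: the conic--incidence map from $S^{[2]}$ to $\Pi_S^{\vee}$ with $H(X)$ as the fiber over $[\Pi_X]$ is how the proof begins (the paper even upgrades $\phi$ to a morphism: since $S=G_2\cap \langle S\rangle$ contains no conic sections, the plane of $C_{p,q}$ never lies in $\langle S\rangle$, and Corollary \ref{conic on G} covers infinitesimally near points). The gap is in your second half. The map you call the Beauville--Mukai fibration, $\pi$ from $M_S^{[2]}$ to $|L'|$ sending $\{r,s\}$ to the unique curve through $r,s$, has fiber over a smooth member $[C]$ equal to $C^{[2]}=\mathrm{Sym}^2 C$, \emph{not} $\mathrm{Jac}(C)$: for $g(C)=2$, $\mathrm{Sym}^2 C$ is the blow-up of $\mathrm{Pic}^2(C)$ at the canonical point, and the exceptional $\mathbb{P}^1$ (the $g^1_2$) shows it is not an abelian surface. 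The system with genuine Jacobian fibers is the relative compactified Jacobian, the moduli space $M_{M_S}(0,L',\chi)$ of sheaves supported on curves in $|L'|$, which is only \emph{birational} to $M_S^{[2]}$ --- a flop along the $\mathbb{P}^2$ of length-2 subschemes contained in fibers of the double cover $M_S\to\Pi_S$, which is also precisely the indeterminacy locus of your $\pi$. Consequently the two middle links of your chain break: the rational map from $S^{[2]}$ to $M_S^{[2]}$ does not identify $\phi^{-1}([\Pi_X])$ with $\pi^{-1}([M_X])$, and indeed it cannot, since $\phi$ is an honest Lagrangian fibration whose general fiber is an abelian surface, while $\pi^{-1}([M_X])\cong\mathrm{Sym}^2(M_X)$ contains a rational curve.

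The paper avoids this trap by never passing to $M_S^{[2]}$: it quotes Sawon \cite[Theorem 2]{Sawon} for the existence of the regular Lagrangian fibration on $S^{[2]}$ itself, and Markushevich \cite[Theorem 4.3]{DM}, who identifies the base of that fibration with the genus-2 linear system on $M_S(3,L,3)$ and proves directly that its general fiber is the Jacobian of the corresponding curve; combined with the conic-incidence description of the fibers this yields $H(X)\cong\mathrm{Jac}(M_X)$. Your plan is repaired the same way: replace the Hilbert square of $M_S$ by the compactified Jacobian model (or simply invoke Markushevich's theorem, which already carries out the Fourier--Mukai translation you flag as your ``main obstacle''), keeping in mind that the flop relating the two models is exactly what converts the $\mathrm{Sym}^2$ fibers into Jacobians.
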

    A similar example is worked out in \cite{IR}.

In section 2 we formulate the main theorem and show an explicit embedding of $F$ into $G(3,6)$.
In section 3 we prove the uniqueness of the embedding by analyzing conic sections on $F$
which appear to be the zero loci of the generic sections of the considered bundles.
The key is to prove the following result on the Hilbert scheme of conic sections and of cubic surface scrolls on $F$.
\begin{prop}\label{conics and scrolls in F}  The Hilbert scheme of conic sections on $F$ is isomorphic to the graph of the Cremona transformation on $\mathbb{P}^5$
 defined by the linear system of quadrics in the ideal of a Veronese surface.

  The Hilbert scheme of cubic
 surface scrolls on $F$ is isomorphic to the union of two disjoint projective planes.
\end{prop}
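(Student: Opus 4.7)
\medskip
\noindent\emph{Proof plan.} My plan is to exploit the two rank-$3$ vector bundles $\mathcal E_1$ and $\mathcal E_2$ on $F$, each with $h^0=6$, supplied by Kuznetsov's construction; by Theorem \ref{embedding} they induce the two linear embeddings $F\hookrightarrow G(3,6)$. First, a direct Chern class calculation on $G(3,6)$ restricted to $F$ should show that the zero scheme of a generic global section of $\mathcal E_i$ is a smooth conic section of $F$, producing two rational maps
\[
\varphi_i\colon \mathbb P^5=\mathbb P\bigl(H^0(F,\mathcal E_i)\bigr)\dashrightarrow \operatorname{Hilb}^{2t+1}(F),\qquad i=1,2.
\]
Independently, a dimension count (using that through two general points of $G_2$ there is a unique conic, and that requiring its spanning $\mathbb P^2$ to lie in the hyperplane cutting $F$ imposes codimension $3$), together with the computation $\chi(N_{C/F})=5$ and the vanishing $H^1(N_{C/F})=0$ for a generic conic, should show that $\operatorname{Hilb}^{2t+1}(F)$ is smooth of dimension $5$.

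The heart of the proof is to show that every conic $C\subset F$ arises as the zero scheme of a section of each $\mathcal E_i$, so that both $\varphi_1$ and $\varphi_2$ are birational onto the Hilbert scheme. I would deduce this from the splitting type of $\mathcal E_i|_C$ on $C\cong \mathbb P^1$: the Chern class constraints force $\mathcal E_i|_C$ to be a specific direct sum of line bundles, and the $3$-dimensional subspace of $H^0(F,\mathcal E_i)$ vanishing on $C$ can then be recovered from the restriction sequence on $C$. The pair $(\varphi_1,\varphi_2)$ would then embed $\operatorname{Hilb}^{2t+1}(F)$ into $\mathbb P^5\times \mathbb P^5$ as the graph of a birational self-map $\psi\colon \mathbb P^5\dashrightarrow \mathbb P^5$.

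To match $\psi$ with the Cremona transformation defined by quadrics through a Veronese, I would make explicit the recovery of $s_2$ from $s_1$ when $Z(s_1)=Z(s_2)=C$, using the Koszul-type resolution $\wedge^{3}\mathcal E_i^{*}\to \wedge^{2}\mathcal E_i^{*}\to \mathcal E_i^{*}\to \mathcal I_C\to 0$ of the ideal sheaf; this should exhibit $\psi$ as defined by a $6$-dimensional linear system of quadrics. The indeterminacy locus of $\psi$ consists of sections whose zero scheme jumps to dimension $2$, which I would identify with cubic surface scrolls in $F$. A natural isomorphism $H^0(F,\mathcal E_i)\cong \mathrm{Sym}^2 U_i$ for a $3$-dimensional space $U_i$, forced by the symmetry of the construction, would then identify the indeterminacy locus with the Veronese surface $\mathbb P(U_i)\hookrightarrow \mathbb P(\mathrm{Sym}^2 U_i)=\mathbb P^5$ of rank-one symmetric tensors. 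The two Veroneses $V_i\cong \mathbb P^2$ parametrize the two disjoint families of cubic scrolls; disjointness is automatic because $\mathcal E_1\not\cong \mathcal E_2$ prevents a scroll from being the zero scheme of sections of both bundles. The main obstacle will be the dominance of $\varphi_i$ — constructing a section of $\mathcal E_i$ intrinsically from an arbitrary conic $C\subset F$ — together with the precise identification of the indeterminacy locus as a Veronese rather than some other irreducible degree-$4$ surface in $\mathbb P^5$.
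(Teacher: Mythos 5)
Your frame is in substance the paper's own: under the canonical identifications $H^0(F,\mathcal{E}_1)\cong U$ and $H^0(F,\mathcal{E}_2)\cong U^*$, the zero scheme of the section corresponding to $u$ (resp.\ to $\xi$) is the Schubert intersection $F\cap F(u,3,U)$ (resp.\ $F\cap G(3,\ker\xi)$), so your graph in $\mathbb{P}^5\times\mathbb{P}^5$ is exactly the embedding $C\mapsto ([u_C],[V_C])$ used in Proposition \ref{hilb scheme of conics}. But the two steps you defer as ``obstacles'' are precisely where the content lies, and the tools you propose fail there. First, Chern classes do not force a splitting type: a rank-$3$, degree-$2$ bundle on $\mathbb{P}^1$ has infinitely many splitting types a priori. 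What does work is global generation of $\mathcal{E}_i$ (inherited from the universal bundles on $G(3,6)$), which pins $\mathcal{E}_i|_C$ to $\mathcal{O}(2)\oplus\mathcal{O}^{\oplus 2}$ or $\mathcal{O}(1)^{\oplus 2}\oplus\mathcal{O}$, whence $h^0(\mathcal{E}_i|_C)=5<6$ and a nonzero section vanishing on $C$ exists. Even so, this only gives $C\subseteq Z(s)$; the graph structure needs that $Z(s)$ is exactly a conic or a cubic scroll and that the vanishing section is unique up to scalar. The paper obtains this from the classification of linear sections of $G(2,5)$ that can lie inside $G_2$ (Proposition \ref{embedded F intersected with G(2,5)}), which in turn rests on Corollary \ref{conic on G} (through two points of $G_2$ there is a line or a unique conic) and on Lemma \ref{scroll in G(3,6)}; none of this is visible from the restriction sequence on $C$ alone.

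The second and more serious gap is the Veronese identification. There is no ``natural isomorphism $H^0(F,\mathcal{E}_i)\cong \operatorname{Sym}^2U_i$ forced by symmetry'': $F$ is not homogeneous, and by the paper's own dimension count in Section \ref{sec Geometry and general construction} its stabilizer in $\operatorname{Sl}(6)$ is only two-dimensional, far from the $8$-dimensional $\operatorname{Sl}(3)$ such an equivariance would require. The paper instead verifies by explicit computation (Macaulay 2) in the pencil of models $H^{\lambda}_{12}$ that the jumping locus is a Veronese surface in each of $\mathbb{P}(U)$ and $\mathbb{P}(U^*)$ (Lemma \ref{lemm G(2,5) cap phi(F) in example} and Corollary \ref{cor G(2,5) cap phi(F) in example}), and it identifies the Cremona transformation not through a Koszul resolution but synthetically: the $\mathbb{P}^2$ of conics on each scroll is contracted by one projection and maps to a plane in the opposite $\mathbb{P}^5$; these planes sweep a hypersurface contracted by $\gamma^{-1}$, and since the base locus is the Veronese, $\gamma$ must be given by the quadrics through it. Finally, your disjointness argument is a non sequitur, demonstrably so: every conic $C\subset F$ is simultaneously the zero scheme of a section of $\mathcal{E}_1$ and of a section of $\mathcal{E}_2$ --- that is exactly what produces the graph --- so $\mathcal{E}_1\not\cong\mathcal{E}_2$ cannot by itself prevent common zero schemes. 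Disjointness of the two planes of scrolls follows instead from Lemma \ref{scroll in G(3,6)}: the degree ($1$ or $2$) of the union $X_S$ of planes parametrized by a scroll $S$ determines whether $S$ lies in a $G(3,B)$ or in an $F(u,3,U)$, and $F(A,3,B)=G(3,B)\cap F(A,3,U)$ is too small to contain a scroll.
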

We extend this analysis to get the following result suggested to us by Frederic Han.
\begin{prop}\label{lines in F}  The Hilbert scheme of lines on $F$ is isomorphic to a smooth divisor of bidegree $(1,1)$ in $\mathbb{P}^2\times\mathbb{P}^2$.
\end{prop}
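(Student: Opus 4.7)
The plan is to use the two components $\Pi_1,\Pi_2\cong\mathbb{P}^2$ of the Hilbert scheme of cubic scrolls on $F$ furnished by Proposition~\ref{conics and scrolls in F} to embed the Hilbert scheme of lines $\Sigma$ into $\mathbb{P}^2\times\mathbb{P}^2$ as a smooth $(1,1)$-divisor. Each cubic scroll $S\in\Pi_i$ is a Hirzebruch surface $\mathbb{F}_1$ embedded so that its ruling consists of lines on $F$; thus every scroll contributes a $\mathbb{P}^1$-family of lines, producing an incidence variety $I_i\subset \Sigma\times \Pi_i$ with $\mathbb{P}^1$-fibers over $\Pi_i$ and total dimension $3=\dim\Sigma$.

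The first and most delicate step is to show that the projection $I_i\to \Sigma$ is an isomorphism: every line $\ell\subset F$ lies on exactly one cubic scroll of each family. I would establish this by analyzing the restriction of the Kuznetsov bundle $\mathcal{E}_i$ to $\ell$; one expects the splitting $\mathcal{E}_i|_\ell\cong\mathcal{O}(1)\oplus\mathcal{O}^2$, so that the sections of $\mathcal{E}_i$ vanishing on $\ell$ cut out a $\mathbb{P}^2$ inside $\mathbb{P}(H^0(\mathcal{E}_i))=\mathbb{P}^5$, and would then identify, via the Veronese/Cremona description of Proposition~\ref{conics and scrolls in F}, a unique section in this $\mathbb{P}^2$ whose zero locus degenerates from a conic into a cubic scroll containing $\ell$. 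Uniqueness comes from a degree argument: two distinct scrolls in the same family sharing $\ell$ would give an excess intersection contradicting the class $[S]^2$ on $F$.

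Combining the two projections yields $\Psi=(\Psi_1,\Psi_2):\Sigma\to \Pi_1\times\Pi_2\cong\mathbb{P}^2\times\mathbb{P}^2$. Injectivity of $\Psi$ reduces to showing that for each $(S_1,S_2)\in\Pi_1\times\Pi_2$ the intersection $S_1\cap S_2\subset F$ contains a unique line on $F$, which I would check by computing the intersection class $[S_1]\cdot[S_2]$ in the Chow ring of $F$. For the bidegree, fix $[S_1]\in\Pi_1$; then $\Psi_1^{-1}([S_1])\cong\mathbb{P}^1$ is the ruling of $S_1$, and the restriction of $\Psi_2$ sends each ruling line to its unique $\Pi_2$-scroll. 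Showing that this induced map $\mathbb{P}^1\to\Pi_2\cong\mathbb{P}^2$ is a linear embedding (degree $1$) yields bidegree $(1,\ast)$; the symmetric argument exchanging $\Pi_1$ and $\Pi_2$ forces $\ast=1$.

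Finally, smoothness of $\Psi(\Sigma)$ follows from smoothness of $\Sigma$, which I would verify by computing the normal bundle $N_{\ell/F}$ of every line: since $F$ has index $2$ one has $\det N_{\ell/F}=\mathcal{O}_\ell$, and using the embedding $F\subset G(3,6)$ one checks that the splitting type is balanced enough that $H^1(N_{\ell/F})=0$, giving smoothness of $\Sigma$ in dimension $3$ everywhere. The main obstacle is the first step: existence and uniqueness of the scroll of each family through a given line, which is exactly where the interplay between the line geometry of $F$ and the degenerate-conic analysis of Proposition~\ref{conics and scrolls in F} must be controlled carefully.
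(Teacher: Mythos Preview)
Your overall strategy---map the Hilbert scheme of lines into $\Pi_1\times\Pi_2\cong\mathbb{P}^2\times\mathbb{P}^2$ via the two families of cubic scrolls---is the same as the paper's. The genuine gap is in your first step: the claim that every line $\ell\subset F$ lies on \emph{exactly one} cubic scroll of each family is false, and the degree argument you propose (via $[S]^2$) cannot rescue it. A line can be the exceptional curve of one scroll and simultaneously a ruling of a different scroll in the \emph{same} family; the paper records this explicitly in a remark following its proof. Concretely, in the paper's notation the line $l_1=\mathbb{P}(U_\ell)\subset\mathbb{P}(U)$ may meet the Veronese surface $V_1$ in a scheme of length two, and each point of that intersection indexes a scroll of the first family containing $\ell$. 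So the incidence projection $I_i\to\Sigma$ is not an isomorphism; it is only birational. A related oversight is that not every cubic scroll on $F$ is a smooth $\mathbb{F}_1$: some are cubic cones, and the paper has to treat these uniformly.

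What the paper does instead is route the argument through the Hilbert scheme $H_F$ of \emph{conics} and its two projections $p_i$ to $\mathbb{P}^5$. For each line $\ell$ it studies the curve $H_\ell\subset H_F$ of reducible conics containing $\ell$, shows that $p_i(H_\ell)=l_i$ is a line, and then defines $w_i(\ell)\in V_i$ not as ``the'' intersection point of $l_i$ with $V_i$ but as the \emph{residual} point---the one for which $\ell$ sits in the corresponding scroll as a ruling rather than as the exceptional curve. This selection rule is what makes the map $\ell\mapsto(w_1(\ell),w_2(\ell))$ well defined everywhere, and the paper then checks directly that fibres over each factor are $\mathbb{P}^1$'s mapping linearly to the other factor, giving bidegree $(1,1)$ and smoothness simultaneously. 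Your bundle-theoretic ideas for the splitting type of $\mathcal{E}_i|_\ell$ and for $H^1(N_{\ell/F})=0$ are reasonable auxiliary tools, but they do not by themselves resolve the non-uniqueness; you would still need a canonical way to single out one scroll among possibly two, and that is exactly the ``ruling versus exceptional'' dichotomy the paper exploits.
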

In section 3 we prove stability of the constructed bundles and injectivity of the map to the moduli space.
In section 4 we find a geometric way to describe all linear sections of $G(3,6)$ which are isomorphic to $F$.
More precisely $F$ is obtained as a component of the intersection of $G(3,6)$ with a maximal dimensional linear
space in a quadric of rank 12 containing $G(3,6)$. The second component being a $\mathbb{P}^1\times \mathbb{P}^1\times \mathbb{P}^1$.
An analogous construction is valid for the variety $G_2$ in $G(2,7)$, but there the second component is $\mathbb P^2\times \mathbb{P}^2$.
In the last section we deduce the results concerning moduli spaces of bundles on sections of $F$.

\section{The embedding}\label{sec embedding}
This section is devoted to proving the existence part of Theorem \ref{embedding} and analyzing the constructed example.
First we need a result on the orbits of the adjoint action of the simple Lie group $\mathbb{G}_2$.
\begin{lemm}\label{pencil of orbits} The projectivized adjoint representation of
$\mathbb{G}_2$ admits a pencil of invariant sextic hypersurfaces 
$\mathcal{D}=\{D_{\mu}\}_{\mu\in \mathbb{P}^1}$.
 Let $B$ be the base locus of this pencil.  
\begin{enumerate}
 \item There is a distinguished element $D_l\in \mathcal{D}$
 which is the discriminant variety of the adjoint variety
 \item For each sextic $D_{\mu}\in \mathcal{D}$ different from $D_l$ the set $D_{\mu}\setminus B$
  is an orbit of the representation.
\end{enumerate}
\end{lemm}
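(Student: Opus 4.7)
The plan is to combine Chevalley's invariant theory for $\mathbb{G}_2$ with an orbit analysis via the adjoint quotient. By Chevalley's restriction theorem, the ring of $\mathbb{G}_2$-invariant polynomials on the Lie algebra $\mathfrak{g}_2$ is a polynomial algebra $\mathbb{C}[q,f_6]$ on two generators of degrees $2$ and $6$, matching the exponents $1,5$ of $\mathbb{G}_2$; take $q$ to be the Killing form and $f_6$ a fundamental invariant of degree $6$. Consequently the space of invariant sextic polynomials is two-dimensional, spanned by $q^3$ and $f_6$, and projectivizing it produces the required pencil $\mathcal{D}=\{\mu q^3+\nu f_6=0\}$ of $\mathbb{G}_2$-invariant sextic hypersurfaces in $\mathbb{P}^{13}=\mathbb{P}(\mathfrak{g}_2)$, with base locus $B=\{q=f_6=0\}$; by Kostant's theorem this base locus is the projectivization of the nilpotent cone of $\mathfrak{g}_2$.

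For part (2), I would exploit the fact that the ratio $[q(x)^3:f_6(x)]$ is both $\mathbb{G}_2$- and $\mathbb{C}^*$-invariant, hence descends to a $\mathbb{G}_2$-equivariant rational map $\Phi\colon\mathbb{P}^{13}\to\mathbb{P}^1$ defined on the complement of $B$, whose fibers are exactly the sets $D_\mu\setminus B$. By the standard theory of the adjoint quotient $\chi\colon\mathfrak{g}_2\to\mathbb{A}^2=\mathrm{Spec}\,\mathbb{C}[q,f_6]$, the generic fiber of $\chi$ is a single regular semisimple orbit of dimension $14-2=12$; since such an orbit is not preserved by scalar multiplication, its image in $\mathbb{P}^{13}$ remains $12$-dimensional and fills an open subset of the $12$-dimensional hypersurface $D_\mu$. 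This forces $D_\mu\setminus B$ to be a single $\mathbb{G}_2$-orbit for every $[\mu:\nu]$ outside a closed subset of $\mathbb{P}^1$.

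For part (1), I would identify the exceptional $D_l$ with the projective dual variety $\hat{G_2}\subset\mathbb{P}^{13}$ of the adjoint variety. Since $G_2$ is $\mathbb{G}_2$-stable, so is its dual, and a direct computation (or consultation of the known classification of duals of adjoint varieties) shows that $\hat{G_2}$ is a sextic hypersurface and hence an element of $\mathcal{D}$. It is distinguished because $\hat{G_2}\setminus B$ splits into several $\mathbb{G}_2$-orbits: it is swept out by the reducible fibers of $\chi$ over the discriminant curve in $\mathbb{A}^2$, the image of the union of root hyperplanes in the Cartan subalgebra, which parameterize non-regular semisimple elements together with their nilpotent Jordan enrichments. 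The main obstacle will be making this last identification precise, matching the discriminant of the representation to a specific ratio $[\mu:\nu]$ and confirming that the orbit stratification of $\hat{G_2}\setminus B$ has more than one top-dimensional piece; both reduce to finite computations in the Cartan of $\mathfrak{g}_2$.
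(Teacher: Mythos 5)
Your construction of the pencil is essentially the paper's, in cleaner language: the paper also works from the ring of invariants of $\mathbb{G}_2$ (it writes it as $\mathbb{C}[Q,\Delta_l]$ with $Q$ the Killing quadric), so the two-dimensionality of the space of invariant sextics and the identification of the base locus $B$ with the projectivized nilpotent cone are common ground. The difference, and the gap, lies in how you treat the degenerate members. The discriminant of the adjoint quotient $\chi\colon\mathfrak{g}_2\to\mathbb{A}^2$, i.e.\ the image of the union of the root hyperplanes of the Cartan subalgebra $\mathfrak{t}$, is \emph{not} a single weighted curve: it has two components, the images of the long-root walls and of the short-root walls. Equivalently, the degree-$12$ invariant $\delta_2$ (the lowest coefficient of the characteristic polynomial of $\operatorname{ad}$) factors as $\Delta_l\cdot\Delta_s$, where $\Delta_l|_{\mathfrak{t}}$ is the product of the six long roots and $\Delta_s|_{\mathfrak{t}}$ the product of the six short roots; this is Tevelev's theorem 8.25, which is exactly the input the paper uses. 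So \emph{two} members of your pencil, $D_l$ and $D_s$, lie over curves in $\mathbb{A}^2$ where the fibers of $\chi$ are reducible into several orbits, and your argument for part (2) — generic fibers of $\chi$ are single regular semisimple orbits — only establishes the single-orbit property for $\mu\notin\{l,s\}$, whereas the lemma asserts it for every $\mu\neq l$, including $\mu=s$.

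Worse, your proposed characterization of the distinguished member in part (1) ("$D_l$ is the member swept out by the reducible fibers over the discriminant curve") cannot distinguish $D_l$ from $D_s$: by that criterion $D_s$, which is the discriminant of the closure of the projectivized short-root orbit, is equally "distinguished," and your proof as written would exclude two points of $\mathbb{P}^1$ instead of one. To recover the lemma you need two further steps that the paper supplies: first, identify \emph{which} of the two special sextics is the discriminant (projective dual) of the adjoint variety — the paper does this through the factorization $\delta_2=\Delta_l\Delta_s$ and the restriction to $\mathfrak{t}$, where the long-root factor is the one cutting the dual of the highest-weight orbit; second, deal with $D_s\setminus B$ separately, since it is not covered by the regular-semisimple analysis (it contains, besides the dense family of regular elements over the short-wall curve, the subregular semisimple orbit, which is not nilpotent and hence not in $B$). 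The paper's route here is invariant-theoretic, via the closedness of orbits not contained in the discriminant; your adjoint-quotient framework stalls precisely at this member, and the transitivity of the weighted $\mathbb{C}^*$-action on the punctured curves $\{\mu a^3+\nu b=0\}\setminus\{0\}$, though routine, should also be stated rather than assumed.
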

\begin{proof}  By \cite[Ex.30]{SK}, the Lie
algebra {\gothfamily g}$_2$ is a subalgebra of {\gothfamily
gl}$(7,\mathbb{C})$, given by matrices of the following form:
\begin{displaymath}
A=\left(\begin{array}{ccccccc}g&h&i&0&f&-e&a\\
           j&k&l&-f&0&d&b\\
m&n&-g-k&e&-d&0&c\\
0&-c&b&-g&-j&-m&d\\
c&0&-a&-h&-k&-n&e\\
-b&a&0&-i&-l&g+k&f\\
2d&2e&2f&2a&2b&2c&0
          \end{array}\right).
\end{displaymath}
It is a simple Lie algebra of rank 2 and dimension 14. Let us denote by {\gothfamily t} its Cartan subalgebra spanned by $g,k$. Let
$$Q_x=\operatorname{det}(t.\operatorname{Id}-ad(x))=\sum_{i=1}^{14}\delta_i(x)t^i,$$
be the characteristic polynomial of the adjoint operator (in our case
it is the restriction of the adjoint operator on {\gothfamily
gl}$(7,\mathbb{C})$ to the subalgebra {\gothfamily g}$_2$, which is
given by a $14\times 14$ matrix with linear entries). By \cite[thm
8.25]{Tev} the polynomial $\Delta(x):=\delta_2(x)$ is a polynomial of degree 12
which can be decomposed into $\Delta(x)=\Delta_l(x).\Delta_s(x)$ in such a way that
$\Delta_l$ is the discriminant of the adjoint variety (the
projectivization of the orbit of the highest weight vector) and $\Delta_s$
is the discriminant of the variety
$\overline{\mathbb{P}(\mathcal{O}_s)}$, where $\mathcal{O}_s$ is the
orbit of any short root vector. The polynomials $\Delta_s$ and $\Delta_l$ restricted to {\gothfamily t} are just the products of short and long roots respectively. It follows that they are
irreducible of degrees equal to the numbers of long and short root
vectors respectively, which is $6$ in both cases. They define hypersurfaces $D_{s}$ and $D_{l}$, respectively,
that generate a pencil of invariant sextics,
which proves the first sentence of the lemma together with (1).
Moreover, the ring of invariants of $\mathbb{G}_{2}$ in the coordinate ring
 of the adjoint representation is $\mathbb{C}[Q,\Delta_{l}]$, where $Q=48(ad+be+cf)+16(g^2+k^2+(g+k)^2+jh+im+nl))$ is the quadric defined by the Killing form.
It follows that we have a $\mathbb{C}^2$ of closed orbits.
Using in addition the fact that all orbits of the representation which are not contained in the discriminant
are closed, this proves that the codimension 1 orbits fill the complement of the discriminant.
\end{proof}
\begin{rem}\label{dual to g2}
The sextic $D_l$ is identified with the dual variety of $G_2$ via the
Killing form and by \cite{Hol} it's singular locus is of codimension two.
\end{rem}

\begin{proof}[Proof of existence in Theorem \ref{embedding}]
Consider
first the whole variety $G_2$.
    It is a linear section of the Grassmannian $G(5,V)$ in it's
Pl\"ucker embedding parameterizing $5$-spaces
    in a $7$-dimensional vector space $V$ isotropic with respect to a
non-degenerate four-form $\omega$.  The span of $G_2$ is the projectivization of
 the adjoint representation of the simple Lie group $\mathbb{G}_2$.
From lemma \ref{pencil of orbits} this projectivized representation has a one-dimensional family of codimension 1 orbits.

In the above coordinates $(a,\dots,n)$ for the Lie-algebra $\mathbb P^{13}=\mathbb P($\text{\gothfamily g}$_2)$,
the variety $G_2$ is defined by the $4\times 4$ Pfaffians of the matrix
$$\left(\begin{array}{ccccccc}
0&-f&e&g&h&i&a\\
f&0&-d&j&k&l&b\\
-e&d&0&m&n&-g-k&c\\
-g&-j&-m&0&c&-b&d\\
-h&-k&-n&-c&0&a&e\\
-i&-l&g+k&b&-a&0&f\\
-a&-b&-c&-d&-e&-f&0
  \end{array}\right).
$$
Consider now a pencil of hyperplane sections given
by the hyperplanes $g=-\lambda (g+k)$.
Observe that this pencil corresponds by the Killing form to the projectivization of the Cartan subalgebra {\gothfamily t} from the proof of Lemma \ref{pencil of orbits}. The latter meets $D_l$ in the set of long roots while $D_s$ in the set of short roots. As these are disjoint, the corresponding line do not meet $B=D_l\cap D_s$. It follows now from Lemma \ref{pencil of orbits} that this pencil has a representative in each orbit of codimension 1 which is not the one contained in the discriminant $D_l$.

For the above pencil of hyperplane sections we explicitly construct a family of embeddings as linear sections
of the Grassmannian $G(3,6)$. To do this it is enough to observe that for each $\lambda \neq 0,-1$ the ideal generating the section of $G_2$ is also generated by the Grassmann quadrics corresponding to the following data.

$$ d, \left(\begin{array}{ccc}
 c& b&-\lambda e\\
-t&l&a\\
m&j&\lambda c
\end{array}\right),
\left(\begin{array}{ccc}
 -(1+\lambda)f&-e&b\\
-\lambda (1+\lambda)t&\lambda n&d\\
-(1+\lambda )i&-h&-f
\end{array}\right)
,(1+\lambda)a,$$
where $t=g+k=\frac{-g}{\lambda}$.
\end{proof}

\begin{rem} We can also find a different family of embeddings of sections of $G_2$ given by the hyperplanes $j=e+\lambda a$ for $\lambda \neq 0$. The embeddings are then described by the data
$$ f, \left(\begin{array}{ccc}
 \lambda b-d& e+\lambda a&b+\lambda l\\
e+\lambda a&g&a+\lambda i\\
b&a&l
\end{array}\right),$$
$$
\left(\begin{array}{ccc}
 h+\lambda b&-k&d-\lambda b\\
-k-\lambda f&-e-\lambda a&-c+\lambda ^2 f+\lambda g+\lambda k\\
d&-c&m-\lambda d
\end{array}\right)
,n-\lambda e.$$

For $\lambda=0$ the above map is no more an embedding but a projection from the only singular point $(0,\dots,1,\dots,0)$, where $i=1$, which is a node. The image of this map is then a proper codimension 2 section of $LG(3,6)\subset G(3,6)$ (for more details see \cite{Unpr}).

\end{rem}
The proof of uniqueness is more delicate and will be postponed to section \ref{unique},
 after we have given some results on the geometry of $G_2$ and its smooth hyperplane sections.
We start with the above pencil of examples.

\subsection{The example}\label{example}
 Let us choose a coordinate system $e_1,\dots,e_6$
on a $6$-dimensional vector space $U$ and denote by $e_{ijk}$, with $1\leq i<j< k\leq 6$, the
corresponding Pl\"ucker coordinates on $\mathbb{P}(\wedge^3U)$ . Let
$x_{ijk}$ denote the respective dual linear forms. The considered family
of hyperplane sections of $G_2$ is then described as a family of sections of $G(3,U)$
by the linear spaces $H^\lambda _{12}\subset \mathbb{P}(\bigwedge^3U)$ given by equations:

\begin{eqnarray*}
x_{456}-(1+\lambda)x_{125}=0\\
x_{134}+x_{356}=0\\
x_{126}-\lambda x_{234}=0\\
x_{123}+x_{346}=0\\
x_{124}-\lambda x_{256}=0\\
x_{156}-(1+\lambda)x_{345}=0\\
x_{146}+\lambda(1+\lambda)x_{235}=0\\.
\end{eqnarray*}

Consider $\Pi^\lambda _6\subset  \mathbb{P}(\bigwedge^3U^*)$ the orthogonal $\mathbb{P}^6$ to the space $H^{\lambda} _{12}$ for $\lambda\neq 0,-1$.
 Consider moreover the set of reducible $3$-forms
  $$\Omega=\{\alpha \colon \alpha=\beta \wedge v, \beta \in
(\bigwedge^2(U))^*, v\in U^* \}\subset(\bigwedge^3(U))^*.$$
   We can explicitly compute that for each $\lambda \neq 0,-1$ the intersection $\Pi^{\lambda} _6\cap \Omega$
is a scroll over a Veronese surface in two ways.
   Indeed, $\Omega\setminus G(3,U^*)$ has two canonical projections
onto $5$-dimensional projective spaces
   $\mathbb{P}(U)$ and $\mathbb{P}(U^*)$. Then we check that $\Pi^{\lambda} _6$
do not meet $G(3,U^*)$.
   Now, to see the scroll structure on the first projection, we need
only to solve the system of equations
    $v\wedge l=0$ with parameters $v$,$\lambda$ and indeterminate $l\in\Pi_6$.
It follows by computations in Macaulay 2 \cite{M2}, for details see \cite{comp},
     that for each $\lambda\neq 0,-1$ there is a Veronese surface of $v$'s such that this system
of equations has a nontrivial solution and the set of solution is
then a line.
     Analogously we consider the second projection. The following
follows:
\begin{lemm}\label{lemm G(2,5) cap phi(F) in example}
Let $Y$ be a variety isomorphic to $G(2,5)$ linearly embedded in
$G(3,U)$, then the linear span of $Y$ intersects each linear space  $H^{\lambda} _{12}$
in a plane or a $4$-dimensional projective space.
\end{lemm}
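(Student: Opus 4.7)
The plan is to reduce the statement to the explicit scroll analysis of $\Pi^\lambda_6 \cap \Omega$ carried out in the preceding paragraphs, via an orthogonality argument.

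First, by the standard classification of linear embeddings of Grassmannians, any linearly embedded $G(2,5)$ inside $G(3,U)$ is one of the two Schubert subvarieties $Y_v = \{V \in G(3,U) : v \in V\}$ for some $v \in U$, or $Y_W = \{V \in G(3,U) : V \subset W\}$ for some $5$-dimensional subspace $W \subset U$. Their linear spans in $\mathbb{P}(\wedge^3 U) = \mathbb{P}^{19}$ are the $\mathbb{P}^9$'s $L_v = \mathbb{P}(v \wedge \wedge^2 U)$ and $L_W = \mathbb{P}(\wedge^3 W)$, respectively. Using the perfect pairing between $\wedge^3 U$ and $\wedge^3 U^*$ together with the identity $\mathrm{span}(A,B)^{\perp} = A^{\perp} \cap B^{\perp}$, a dimension count gives
\[\dim(L \cap H^\lambda_{12}) = 3 + \dim(L^{\perp} \cap \Pi^\lambda_6),\]
so the lemma reduces to the assertion that $L^{\perp} \cap \Pi^\lambda_6$ is either empty or a projective line. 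The two orthogonals are immediate: $L_v^{\perp} = \{\alpha \in \mathbb{P}(\wedge^3 U^*) : \iota_v \alpha = 0\}$ and $L_W^{\perp} = \{\alpha \in \mathbb{P}(\wedge^3 U^*) : \alpha \wedge \phi = 0\}$, where $\phi \in U^*$ is any linear form with $\ker \phi = W$.

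The two cases now match exactly the two projections of $\Omega \setminus G(3,U^*)$ analysed above. For $Y = Y_W$, the intersection $L_W^{\perp} \cap \Pi^\lambda_6$ is precisely the solution set of the system $\alpha \wedge \phi = 0$ with $\alpha \in \Pi^\lambda_6$ treated by the first projection $\Omega \to \mathbb{P}(U^*)$; the quoted Macaulay~2 computation gives that this set is a projective line when $[\phi]$ lies on the Veronese $V \subset \mathbb{P}(U^*)$ produced by that projection, and is empty otherwise. For $Y = Y_v$ the intersection $L_v^{\perp} \cap \Pi^\lambda_6$ is the solution set of $\iota_v \alpha = 0$, handled symmetrically by the second projection $\Omega \to \mathbb{P}(U)$: it is a line when $[v]$ lies on the Veronese $V' \subset \mathbb{P}(U)$, and empty otherwise. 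In both cases $L^{\perp} \cap \Pi^\lambda_6$ is empty or a line, yielding the claimed dichotomy $\dim(L \cap H^\lambda_{12}) \in \{2,4\}$.

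The main obstacle is the precision of the Macaulay~2 verification referenced above: one has to confirm that for every $v \in U$ (respectively every $\phi \in U^*$), the kernel of the associated contraction map $\iota_v \colon \Pi^\lambda_6 \to \wedge^2 U^*$ (respectively the multiplication map $\alpha \mapsto \alpha \wedge \phi$ into $\wedge^4 U^*$) has vector-space dimension exactly $0$ or exactly $2$, never $1$ and never at least $3$. Equivalently, the rank of the corresponding $7 \times 15$ matrix must equal $7$ generically and drop to exactly $5$ on a Veronese locus, with no intermediate rank-$6$ stratum and no further rank drop. This precise determinantal behaviour cannot be read off from a general dimension count, but follows from the preceding explicit scroll description; combined with the Schubert classification of linearly embedded $G(2,5)$'s, it yields the lemma.
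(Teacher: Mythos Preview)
Your proof is correct and follows essentially the same route as the paper's own argument: classify the linearly embedded $G(2,5)$ as either $G(3,W)$ or $F(v,3,U)$, identify $\langle Y\rangle^{\perp}$ with the closure of a fiber of one of the two projections of $\Omega\setminus G(3,U^*)$, invoke the Macaulay~2 computation that this fiber meets $\Pi^{\lambda}_6$ in either the empty set or a line, and conclude by duality. Your version is somewhat more explicit in writing out the dimension formula $\dim(L\cap H^{\lambda}_{12})=3+\dim(L^{\perp}\cap\Pi^{\lambda}_6)$ and the concrete descriptions of $L_v^{\perp}$ and $L_W^{\perp}$, and your final paragraph honestly flags the same computational dependence the paper relies on, but there is no substantive difference in strategy.
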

\begin{proof}
 Observe that $Y$ is equal to one of the following
\begin{itemize}
 \item the Grassmannian $G(3,W)$ for some $5$-dimensional vector subspace $W\subset  U$,
 \item the partial Flag variety $F(w,3,U)$ of $3$-dimensional subspaces of $U$ that contains a given vector $w\in U$.
\end{itemize}
In both cases $<Y>^\perp \subset \Omega$ is the closure of a fiber of
one of the two projections of $\Omega\setminus G(3,U^*)$.
 But we checked above that the intersection of $\Pi^{\lambda}_6=(H^{\lambda}_{12})^{\perp}$
is either empty or it is a line. The assertion follows by duality.
\end{proof}
\begin{cor}\label{cor G(2,5) cap phi(F) in example}
 The set of $W$ (resp. $w$) for which the intersection $<G(3,W)>\cap
H^{\lambda}_{12}$ (resp. $F(w,3,U)\cap H^{\lambda}_{12}$ )
 is of dimension $4$ is a Veronese surface in $\mathbb{P}(U^*)$
(resp. $\mathbb{P}(U)$).
\end{cor}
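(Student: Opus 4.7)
The plan is to dualize the dichotomy from Lemma~\ref{lemm G(2,5) cap phi(F) in example} and identify the jumping locus on $Y$ with the base of the scroll structure on $\Pi^{\lambda}_{6}\cap\Omega$ that was already computed in Subsection~\ref{example}.

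First I would translate the two possibilities for $\dim(\langle Y\rangle\cap H^{\lambda}_{12})$ into statements about the orthogonal intersection. Since $\langle Y\rangle$ is a $\mathbb{P}^9$ and $H^{\lambda}_{12}$ is a $\mathbb{P}^{12}$ in $\mathbb{P}(\wedge^3 U)=\mathbb{P}^{19}$, the standard dimension formula together with the identification $\langle L_1,L_2\rangle^\perp=L_1^\perp\cap L_2^\perp$ yields
\[
 \dim(\langle Y\rangle\cap H^{\lambda}_{12})=\dim(\langle Y\rangle^\perp\cap\Pi^{\lambda}_{6})+3.
\]
Hence the plane case of Lemma~\ref{lemm G(2,5) cap phi(F) in example} corresponds to $\langle Y\rangle^\perp\cap\Pi^{\lambda}_{6}=\emptyset$, while the $\mathbb{P}^4$ case corresponds to $\langle Y\rangle^\perp\cap\Pi^{\lambda}_{6}$ being a line.

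Next I would invoke the description, already recorded in the proof of Lemma~\ref{lemm G(2,5) cap phi(F) in example}, of $\langle Y\rangle^\perp$ as the closure of a fiber of one of the two canonical projections $\Omega\setminus G(3,U^*)\to\mathbb{P}(U)$ or $\mathbb{P}(U^*)$: the $\mathbb{P}(U)$-projection when $Y=F(w,3,U)$ (as the contraction description $\langle F(w,3,U)\rangle^\perp=\{\alpha\in\wedge^3U^*:i_w\alpha=0\}$ makes transparent) and the $\mathbb{P}(U^*)$-projection when $Y=G(3,W)$. Under this identification, $\langle Y\rangle^\perp\cap\Pi^{\lambda}_{6}$ is nothing but the fiber of $\Pi^{\lambda}_{6}\cap\Omega$ over the point $w$ (resp.\ $W$).

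Finally, the explicit computation immediately preceding Lemma~\ref{lemm G(2,5) cap phi(F) in example} showed that $\Pi^{\lambda}_{6}\cap\Omega$ is, via either of these two projections, a scroll of lines over a Veronese surface in $\mathbb{P}(U)$ (resp.\ $\mathbb{P}(U^*)$). The base of this scroll is by definition the locus of base points over which the fiber of $\Pi^{\lambda}_{6}\cap\Omega$ is non-empty, so by the previous step this Veronese surface is exactly the set of $w$ (resp.\ $W$) for which $\langle Y\rangle\cap H^{\lambda}_{12}$ is $4$-dimensional. I do not expect a serious obstacle here: the corollary is essentially a rephrasing of the scroll computation via projective duality, and the only care needed is to match the two types of $Y$ with the two projections, which is already implicit in the proof of Lemma~\ref{lemm G(2,5) cap phi(F) in example}.
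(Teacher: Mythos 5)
Your proposal is correct and takes essentially the same route as the paper, whose proof of the corollary is just the one-line remark that it ``follows directly from the proof of the lemma and the discussion above it'' --- i.e.\ precisely the identification of $\langle Y\rangle^{\perp}$ with a fiber closure of one of the two projections of $\Omega\setminus G(3,U^*)$ made in the proof of Lemma \ref{lemm G(2,5) cap phi(F) in example}, combined with the Macaulay2 computation in Subsection \ref{example} showing the fiber of $\Pi^{\lambda}_{6}\cap\Omega$ is a line exactly over a Veronese surface. Your explicit duality bookkeeping $\dim(\langle Y\rangle\cap H^{\lambda}_{12})=\dim(\langle Y\rangle^{\perp}\cap\Pi^{\lambda}_{6})+3$, with the empty-versus-line dichotomy matching the plane-versus-$\mathbb{P}^4$ cases, simply spells out what the paper leaves implicit.
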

\begin{proof}
The proof follows directly from the proof of the lemma and the
discussion above it.
\end{proof}

\section{Conic sections and cubic surface scrolls on $G_2$ }\label{unique}
In this section we study the Hilbert scheme of conic sections and cubic surface scrolls on $G_2$.
Let $V$ be a $7$-dimensional vector space,  let $\omega\in\wedge^4 V$ be a $4$-form defining $G_2\subset \mathbb{P}(\wedge^2 V)$, and let
$\omega^*$ be the corresponding dual $3$-form.  Then, as in
\cite[fig.1]{ott}  and  \cite[Ex. 30]{SK}, we may choose coordinates
$V^*=<x_{0},...,x_{6}>$ such that  $\omega^*\in \wedge^3V^*$ is defined as
$$\omega^*=x_{0}\wedge x_{1}\wedge x_{2}+x_{3}\wedge x_{4}\wedge
x_{5}+(x_{0}\wedge x_{3}+x_{1}\wedge x_{4}+x_{2}\wedge x_{5})\wedge
x_{6}.$$
The variety $G_2\subset G(2,V)$ is defined by the Pfaffians of the skew
symmetric matrix $(x_{ij})\quad 0\leq i,j\leq 6, x_{ij}+x_{ji}=0$,
restricted to the $2$-vectors that are killed by $\omega^*$.  With
coordinates $x_{ij}=x_{i}\wedge x_{j}$ the variety $G_2$ is then given
 by the Pfaffians of
$$ \left(\begin{matrix}
0&-x_{56}&x_{46}&x_{03}&x_{04}&x_{05}&x_{06}\\
-x_{56}&0&-x_{36}&x_{13}&x_{14}&x_{15}&x_{16}\\
-x_{46}&x_{36}&0&x_{23}&x_{24}&x_{25}&x_{26}\\
-x_{03}&-x_{13}&-x_{23}&0&x_{26}&-x_{16}&x_{36}\\
-x_{04}&-x_{14}&-x_{24}&-x_{26}&0&x_{06}&x_{46}\\
-x_{05}&-x_{15}&-x_{25}&x_{16}&-x_{06}&0&x_{56}\\
-x_{06}&-x_{16}&-x_{26}&-x_{36}&-x_{46}&-x_{56}&0\\

\end{matrix}\right)
$$
with $x_{03}+x_{14}+x_{25}=0$  as in the previous section.

\begin{lemm}\label{planes on G_2}There are no planes on $G_2$.
\end{lemm}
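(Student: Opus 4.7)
The plan is to use the classification of planes in the ambient Grassmannian $G(2,V)$ and rule out each type using the defining $3$-form $\omega^*$. Any plane in $G(2,V)$ is either of $\alpha$-type $\{L \colon v \subset L \subset U\}$ for a flag of dimensions $1 \subset 4$, or of $\beta$-type $G(2,W)$ for some $3$-dimensional subspace $W \subset V$.

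First I would handle the $\alpha$-case. The condition $\{L \colon v \subset L \subset U\} \subset G_2$ translates to $\omega^*(v,u,\cdot) = 0$ for every $u \in U$, i.e., $U \subset \ker(\iota_v\omega^*)$. Since $v$ is automatically in this kernel, an $\alpha$-plane exists only if $\dim \ker(\iota_v\omega^*) \geq 4$, equivalently the skew form $\iota_v\omega^*$ has rank at most $2$. The statement to prove is therefore: for every $v = \sum a_i e_i \neq 0$, the $7 \times 7$ skew matrix $M(v)$ of $\iota_v\omega^*$ has rank at least $4$. This I would verify by exhibiting a finite list of $4 \times 4$ Pfaffians of $M(v)$ whose simultaneous vanishing forces $v = 0$: the Pfaffians on the index sets $\{1,2,3,6\},\{0,2,4,6\},\{0,1,5,6\},\{0,4,5,6\},\{1,3,5,6\},\{2,3,4,6\}$ give, up to sign, the squares $a_0^2,\ldots,a_5^2$, while the Pfaffian on $\{0,1,2,3\}$ contributes $a_0 a_6$; checking $v = e_6$ separately handles the remaining case.

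For the $\beta$-case, the condition $G(2,W) \subset G_2$ reads $\omega^*(v,w,\cdot) = 0$ for all $v, w \in W$, equivalently $W \subset \ker(\iota_v\omega^*)$ for every $v \in W$. Combined with the rank bound of the $\alpha$-case, this forces $\ker(\iota_v\omega^*) = W$ for every non-zero $v \in W$. The locus $\{v \in \mathbb{P}(V) : \mathrm{rank}(\iota_v\omega^*)\leq 4\}$ is a closed, $\mathbb{G}_2$-stable, non-empty, proper subvariety of $\mathbb{P}(V)$, so it must coincide with the $\mathbb{G}_2$-invariant quadric $Q$ (the unique non-trivial closed $\mathbb{G}_2$-orbit in $\mathbb{P}(V)$), on which $\mathbb{G}_2$ acts transitively. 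By equivariance it therefore suffices to test the constancy of $\ker(\iota_v\omega^*)$ at a single representative $v_0 \in Q$. Taking $v_0 = e_0$, a direct matrix computation gives $\ker(\iota_{e_0}\omega^*) = \langle e_0, e_4, e_5\rangle =: W_0$; repeating for $v = e_4 \in W_0$ yields instead $\ker(\iota_{e_4}\omega^*) = \langle e_0, e_2, e_4\rangle \neq W_0$, so the constancy fails and no $\beta$-plane can lie in $G_2$.

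The main obstacle is the $\alpha$-case, where one must assemble enough Pfaffians to jointly force $v = 0$; this is bookkeeping but clean given the explicit $\omega^*$. The $\beta$-case then follows immediately from $\mathbb{G}_2$-equivariance and one extra kernel computation.
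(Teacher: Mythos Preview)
Your argument is correct, and it takes a genuinely different route from the paper. The paper's proof is a single tangent-cone computation: at one chosen point $p\in G_2$ they show that $T_pG_2\cap G_2$ is a cone over a twisted cubic curve, which is an irreducible surface of degree~$3$ and hence contains no plane; homogeneity of $G_2$ then finishes. Your approach instead works in the ambient Grassmannian, splitting planes of $G(2,V)$ into $\alpha$- and $\beta$-types and eliminating each via the contraction $\iota_v\omega^*$. The paper's method is shorter and yields, as a by-product, the precise structure of the tangent cone (information the paper later reuses implicitly). Your method is more structural: it isolates exactly why each type of plane fails, identifies the rank stratification of $\iota_v\omega^*$ with the $\mathbb{G}_2$-invariant quadric in $\mathbb{P}(V)$, and would adapt more readily to other varieties cut out by a $3$-form on a Grassmannian of lines. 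Both rely on an explicit computation plus the transitivity of $\mathbb{G}_2$ (on $G_2$ in the paper's case, on the invariant quadric in yours).
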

\begin{proof}
We compute directly in any chosen point $p\in G_2$ that the intersection of $G_2$ with the tangent space to $G_2$ in
$p$ is a cone over a twisted cubic curve, hence do not contain a plane. Indeed, let $p=(0,\dots,0,1,0,\dots,0)$ where $x_{05}=1$.
Then the tangent space at $p$ is given by
$x_{13}=x_{14}=x_{16}=x_{23}=x_{24}=x_{26}=x_{36}=x_{46}=0$
and its intersection with $G_2$ is additionally given by the Pfaffians of
$$ \left(\begin{matrix}
0&-x_{56}&0&x_{03}&x_{04}&x_{05}&x_{06}\\
-x_{56}&0&0&0&0&x_{15}&0\\
0&0&0&0&0&x_{25}&0\\
-x_{03}&0&0&0&0&0&0\\
-x_{04}&0&0&0&0&x_{06}&0\\
-x_{05}&-x_{15}&-x_{25}&0&-x_{06}&0&x_{56}\\
-x_{06}&0&0&0&0&-x_{56}&0\\
\end{matrix}\right)
$$
with $x_{03}+x_{25}=0$
i.e. by
the $2\times 2$ minors of
$$ \left(\begin{matrix}
x_{56}&x_{06}&x_{04}\\
x_{15}&x_{56}&x_{06}\\
\end{matrix}\right)
$$
and $x_{03}+x_{25}=x_{03}^2=0$.
Therefore the tangent cone is a cone over a twisted cubic curve. We conclude by homogeneity.
\end{proof}

 \begin{lemm} Let
     $Q\in \mathbb{P}(V^{*})$ be the $\mathbb{G}_{2}$ invariant smooth quadric
     hypersurface, and let $U\subset V$, be a $6$-dimensional
subspace. Then the intersection $G_2\cap G(2,U)$ is a fourfold of
degree $6$ that spans a $\mathbb{P}^7$.  Furthermore
     \begin{itemize}
\item If $[U]\notin Q$, then $G_2\cap G(2,U)$ is a smooth hyperplane
section of  $\mathbb{P}^2\times \mathbb{P}^2$.
\item If $[U]\in Q$, then $G_2\cap G(2,U)$ is a hyperplane section of
$\mathbb{P}(O_{\mathbb{P}^2}(2)\oplus T_{\mathbb{P}^2}(-1)))$.
\end{itemize}
\end{lemm}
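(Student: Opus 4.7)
The plan is to describe $G_2\cap G(2,U)$ via the restriction of the defining $3$-form $\omega^*$ to the hyperplane $U$. Fix $v_0\in V\setminus U$ and the dual form $v_0^\vee\in V^*$ vanishing on $U$; then $\omega^* = \phi + v_0^\vee\wedge\sigma$ with $\phi := \omega^*|_U\in\wedge^3 U^*$ and $\sigma := \iota_{v_0}\omega^*|_U\in\wedge^2 U^*$. Expanding $\omega^*(w_1,w_2,v)$ for $w_1,w_2\in U$ and $v = u + tv_0$ shows that $W = \langle w_1,w_2\rangle$ lies in $G_2$ if and only if (a) the contraction $\phi(w_1,w_2,\,\cdot\,)\in U^*$ vanishes identically, and (b) $\sigma(w_1,w_2) = 0$; these are the six plus one conditions matching the seven linear relations that define $G_2$ inside $G(2,V)$. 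The $\mathbb{P}^7$-span then follows by checking that these seven linear forms remain linearly independent after restriction to $\wedge^2 U$, which is direct in the coordinates of the excerpt.

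Since $\mathbb{G}_2$ acts on $\mathbb{P}(V^*)$ with exactly two orbits, the complement of $Q$ and $Q$ itself, equivariance reduces the remaining work to one representative $U$ per orbit. For $[U]\notin Q$ the form $\phi$ lies in the open $\mathrm{GL}(U)$-orbit on $\wedge^3 U^*$ and admits the normal form $\phi = \alpha_1\wedge\alpha_2\wedge\alpha_3 + \beta_1\wedge\beta_2\wedge\beta_3$ relative to a canonical splitting $U = A\oplus B$ into two $3$-planes. For $w_i = a_i + b_i\in A\oplus B$, condition (a) reduces to $a_1\wedge a_2 = 0$ and $b_1\wedge b_2 = 0$, which forces $W = \langle a,b\rangle$ with $a\in A$ and $b\in B$. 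This identifies the $\phi$-locus in $G(2,U)$ with the Segre image $\mathbb{P}(A)\times\mathbb{P}(B)\hookrightarrow\mathbb{P}(A\otimes B) = \mathbb{P}^8$, and condition (b) becomes the single bilinear equation $\sigma(a,b) = 0$, cutting out a hyperplane section of $\mathbb{P}^2\times\mathbb{P}^2$. Smoothness amounts to nondegeneracy of $\sigma|_{A\times B}$; by $\mathbb{G}_2$-equivariance the failure locus in $\mathbb{P}(V^*)$ is a $\mathbb{G}_2$-invariant hypersurface, and thus must coincide with the quadric $Q$.

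The delicate step is the case $[U]\in Q$: here $\phi$ drops out of the open orbit into a smaller $\mathrm{GL}(U)$-orbit on $\wedge^3 U^*$, and in the appropriate normal form a distinguished direction in $U$ produced by the degeneration should supply the base $\mathbb{P}^2$ and the line-bundle summand $\mathcal{O}_{\mathbb{P}^2}(2)$ of the scroll $\mathbb{P}(\mathcal{O}_{\mathbb{P}^2}(2)\oplus T_{\mathbb{P}^2}(-1))$, while the rank-two summand $T_{\mathbb{P}^2}(-1)$ should come from the remaining interaction of $\phi$ with $\sigma$ on the complementary subspace. The main obstacle I anticipate is to verify this $\mathbb{P}^2$-bundle decomposition explicitly and to match the polarisation, in order to recognise the locus cut out by (a) and (b) as a hyperplane section of the stated scroll.
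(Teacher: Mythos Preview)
Your plan is essentially the paper's proof: both reduce via the two $\mathbb{G}_2$-orbits on $\mathbb{P}(V^*)$ to one representative $U$ per orbit and identify $G_2\cap G(2,U)$ through the pair of conditions $\iota_W\phi=0$ and $\sigma(W)=0$ arising from the splitting $\omega^*=\phi+v_0^\vee\wedge\sigma$. For $[U]\notin Q$ the paper takes $U=\{x_6=0\}$, so that $\phi=x_0\wedge x_1\wedge x_2+x_3\wedge x_4\wedge x_5$ is visibly in the open $\mathrm{GL}(6)$-orbit and $\sigma=x_0\wedge x_3+x_1\wedge x_4+x_2\wedge x_5$; condition (a) becomes the $2\times 2$ minors of the $3\times 3$ matrix $(x_{ij})$ with $i\in\{0,1,2\}$, $j\in\{3,4,5\}$, and (b) the hyperplane $x_{03}+x_{14}+x_{25}=0$. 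This is exactly your normal-form argument written out in coordinates, with nondegeneracy of $\sigma|_{A\times B}$ immediate from the expression rather than inferred indirectly from equivariance.

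The step you flag as the obstacle, $[U]\in Q$, is settled in the paper by an explicit coordinate computation rather than by the abstract bundle picture you outline. With $U=\{x_0=0\}$ one has $\phi=x_3\wedge x_4\wedge x_5+(x_1\wedge x_4+x_2\wedge x_5)\wedge x_6$ and $\sigma=x_1\wedge x_2+x_3\wedge x_6$; after imposing the linear conditions, the defining quadrics in the resulting $\mathbb{P}^7$ turn out to be the $2\times 2$ minors of a \emph{symmetric} $3\times 3$ matrix $M$ in six of the Pl\"ucker coordinates, together with the three entries of the product $(x_{13}\ x_{23}\ x_{12})\cdot M$ and one residual linear relation. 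These are then recognised directly as the standard quadratic equations of the linearly embedded scroll $\mathbb{P}(\mathcal{O}_{\mathbb{P}^2}(2)\oplus T_{\mathbb{P}^2}(-1))$, the minors of the symmetric $M$ producing the Veronese base and the three product entries encoding the rank-two summand. So your anticipated obstacle is overcome not by matching polarisations abstractly but simply by recognising these explicit equations.
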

\begin{proof}  The group $\mathbb{G}_{2}$ has two orbits on
$\mathbb{P}(V^{*})$, so for the intersection
$G_2\cap G(2,U)$ there are two isomorphism classes.
We compute these directly.  If $U\subset V$ is the $6$-dimensional subspace $x_{6}=0$, then $[U]\notin Q$ and the
intersection $G_2\cap G(2,U)$ is defined by
$\{\alpha\in G(2,U): (x_{0}\wedge x_{1}\wedge x_{2}+x_{3}\wedge
x_{4}\wedge x_{5})(\alpha)=(x_{0}\wedge x_{3}+x_{1}\wedge
x_{4}+x_{2}\wedge x_{5})(\alpha)=0$
so it is defined by the $2\times 2$ minors
of
$$
 \left(\begin{matrix}
x_{03}&x_{04}&x_{05}\\
x_{13}&x_{14}&x_{15}\\
x_{23}&x_{24}&x_{25}\\

\end{matrix} \right)
$$
with $x_{03}+x_{14}+x_{25}=0$, i.e.
a smooth fourfold Fano hyperplane section of  $\mathbb{P}^2\times
\mathbb{P}^2$.
On the other hand if $U\subset V$ is the $6$-dimensional subspace
$x_{0}=0$, then $[U]\in Q$ and the intersection $G_2\cap G(2,U)$ is defined by
$\{\alpha\in G(2,U): (x_{3}\wedge x_{4}\wedge x_{5}+(x_{1}\wedge
x_{4}+x_{2}\wedge x_{5})\wedge x_{6})(\alpha)=
(x_{1}\wedge x_{2}+x_{3}\wedge x_{6})(\alpha)=0,$ so it is defined by
the Pfaffians
of
$$ \left(\begin{matrix}

0&-x_{36}&x_{13}&-x_{25}&x_{15}&x_{16}\\
x_{36}&0&x_{23}&x_{24}&x_{25}&x_{26}\\
-x_{13}&-x_{23}&0&x_{26}&-x_{16}&x_{36}\\
x_{25}&-x_{24}&-x_{26}&0&0&0\\
-x_{15}&-x_{25}&x_{16}&0&0&0\\
-x_{16}&-x_{26}&-x_{36}&0&0&0\\

\end{matrix}\right)
$$
 i.e. by the $2\times 2$ minors
of the symmetric matrix
$$
\left(
\begin{matrix}
x_{24}&x_{25}&x_{26}\\
x_{25}&-x_{15}&-x_{16}\\
x_{26}&-x_{16}&x_{36}\\
\end{matrix}
\right)
$$
 and the three quadratic entries in the product matrix
$$
(x_{13}\quad x_{23}\quad x_{12})\cdot
\left(
\begin{matrix}
x_{24}&x_{25}&x_{26}\\
x_{25}&-x_{15}&-x_{16}\\
x_{26}&-x_{16}&x_{36}\\
\end{matrix}
\right)
$$
together with the linear relation $x_{12}+x_{36}=0$.
But these quadrics define the linear embedding of the
$\mathbb{P}^2$-bundle
$\mathbb{P}(O_{\mathbb{P}^2}(2)\oplus T_{\mathbb{P}^2}(-1))$, so the
lemma follows.

\end{proof}

\begin{lemm}\label{W in V}  Let  $W\subset V$, be a $5$-dimensional
subspace.
     One of the following holds
     \begin{itemize}
\item  the intersection $G_2\cap G(2,W)$ in $G(2,V)$ is proper.  It is a possibly singular conic section.
\item the intersection $G_2\cap G(2,W)$ in $G(2,V)$ is not proper.  It
is a possibly singular cubic surface scroll.
\end{itemize}
There is a $7$-fold subvariety in $G(5,V)$ of $5$-dimensional
subspaces $W\subset V$ such that the intersection $G_2\cap G(2,W)$ is a
cubic scroll.
\end{lemm}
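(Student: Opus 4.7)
My plan is to realize $G_2\cap G(2,W)$ as the vanishing locus in $G(2,W)\cong G(2,5)$ of a canonical section $s_W$ of a rank~$5$ vector bundle $\mathcal{E}_W$ built from $\omega^*$, and then analyze $s_W$ via Schubert calculus and an excess-intersection argument.

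Choose a splitting $V=W\oplus W'$ with $\dim W'=2$ and decompose $\omega^*=\eta_1+\eta_2+\eta_3$ according to its components in $\wedge^3W^*$, $\wedge^2W^*\otimes(W')^*$ and $W^*\otimes\wedge^2(W')^*$ (the fourth summand vanishes since $\dim W'=2$). For a $2$-plane $\alpha\subset W$ only $\eta_1$ and $\eta_2$ contribute to $\iota_\alpha\omega^*$, and the two conditions $\iota_\alpha\eta_1=0$ in $W^*$ and $\iota_\alpha\eta_2=0$ in $(W')^*$ define a section $s_W$ of $\mathcal{E}_W=(W/\mathcal{S})^*(1)\oplus\mathcal{O}(1)^{\oplus 2}$ on $G(2,W)$, of rank $3+2=5$; the first factor has rank $3$ because $\iota_\alpha\eta_1$ automatically annihilates $\mathcal{S}=\alpha$. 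Equivalently, letting $\phi_W\colon\wedge^2W\to V^*$ be $\alpha\mapsto\iota_\alpha\omega^*$, the intersection equals $G(2,W)\cap\mathbb{P}(\ker\phi_W)$.

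In the proper case $\phi_W$ is surjective, so $\mathbb{P}(\ker\phi_W)=\mathbb{P}^2$ and the intersection is planar. A short Schubert computation on $G(2,5)$ yields $c_5(\mathcal{E}_W)=2\sigma_{3,2}$, pairing with $\sigma_1$ to give degree $2$; a degree-$2$ curve in a plane is a (possibly singular) conic section.

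In the non-proper case $\phi_W$ drops rank and $G(2,W)\cap\mathbb{P}(\ker\phi_W)$ acquires a $2$-dimensional component. I would identify this surface as a cubic scroll by showing it is ruled by lines of $G_2$ contained in $G(2,W)$ (each coming from a pencil of $2$-planes in a distinguished $3$-plane $\Pi\subset W$ compatible with both $\eta_1$ and $\eta_2$) and by computing its degree $3$ via the excess/residual intersection formula applied to $\mathcal{E}_W$. The locus of such $W$ inside $G(5,V)=G(5,7)$ is $\mathbb{G}_2$-invariant and defined by a rank-drop condition on $\phi_W$; combining a dimension count on this rank-drop locus with $\mathbb{G}_2$-equivariance produces codimension $3$, hence dimension $7$.

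The main obstacle is the scroll identification: the bundle setup immediately gives a $2$-dimensional zero locus with a natural ruling, but matching it precisely to the geometry of a cubic surface scroll and verifying degree $3$ rigorously requires either an excess residual computation on $G(2,5)$ or a direct coordinate analysis based on normal forms of the degenerate restriction $\omega^*|_W$, together with a careful orbit-dimension count under $\mathbb{G}_2$.
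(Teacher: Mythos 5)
Your bundle-theoretic setup is correct as far as it goes, and it is genuinely different from the paper's proof, which instead embeds $W$ in a general $6$-dimensional $U$ and reduces everything to the two explicit normal forms of $G_2\cap G(2,U)$ (a hyperplane section of $\mathbb{P}^2\times\mathbb{P}^2$ when $[U]\notin Q$, of $\mathbb{P}(O_{\mathbb{P}^2}(2)\oplus T_{\mathbb{P}^2}(-1))$ when $[U]\in Q$). Your decomposition of $\omega^*$, the identification of the intersection as the zero locus of a section of $Q^*(1)\oplus\mathcal{O}(1)^{\oplus 2}$ on $G(2,5)$, and the computation $c_5=\sigma_{2,1}\cdot\sigma_1^2=2\sigma_{3,2}$ are all right. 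But there is a genuine gap already in the proper case: the assertion ``in the proper case $\phi_W$ is surjective'' is exactly what needs proof, and without it the conclusion fails. A $1$-dimensional zero locus of class $2\sigma_{3,2}$ could a priori be two \emph{skew} lines (every line on $G(2,5)$ has class $\sigma_{3,2}$, and disjoint pencils with skew spans exist), which is not a conic section; the Chern class cannot distinguish this, and if $\ker\phi_W=\wedge^2W\cap\ker(\iota_{\omega^*})$ has dimension $4$ the intersection can perfectly well still be $1$-dimensional as far as your argument shows, but then it sits in a $\mathbb{P}^3$ rather than a plane. Planarity/connectedness is precisely the content that the paper's Corollary \ref{conic on G} later extracts from this lemma, so it cannot be waved through.

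The other two assertions of the lemma are left as plans rather than proofs, and they are the heart of the matter. For the non-proper case you say you ``would identify this surface as a cubic scroll,'' but the work is to exclude every alternative: a plane, a quadric, a Veronese surface, a quartic cone, or a scroll together with residual curve components. (The paper does this by an explicit analysis of $X_U=G_2\cap G(2,U)$ inside the fourfold cone over a Veronese surface with vertex line $L_U$, showing that only linear sections through $L_U$ can be linear sections of $G(2,5)$, and that these are exactly cubic scrolls; note also that a Veronese surface genuinely does occur nearby in this geometry, so it is not excluded for formal reasons.) Excess intersection on $\mathcal{E}_W$ yields a class of total degree $2$ in dimension one supported on the surface, not the degree of the surface itself, so ``degree $3$'' does not fall out of the residual formula without substantial extra input. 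Finally, the dimension count is not a computation: the expected codimension of the rank $\leq 6$ locus of a $7\times 10$ matrix is $(7-6)(10-6)=4$, not $3$, so your family $\phi_W$ over $G(5,V)$ is necessarily non-generic and $\mathbb{G}_2$-equivariance alone produces no number. The paper instead gets dimension $7$ cleanly from the incidence $I=\{(W,U):\dim G(2,W)\cap G_2>1\}$: over each $U$ the bad $W$ form a surface (two $\mathbb{P}^2$'s in the first normal form, the $\mathbb{P}^2$ of $\ell\in\langle x_4,x_5,x_6\rangle$ in the second), so $\dim I=6+2=8$, and the fibers over $W$ are $1$-dimensional pencils of $U$'s. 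To salvage your approach you would need, at minimum, a rank analysis of $\phi_W$ tied to normal forms of $\omega^*|_W$ under $\mathbb{G}_2$, which is essentially the coordinate analysis you deferred.
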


\begin{proof}
   Let $W\subset V$, be a $5$-dimensional
subspace, and let $U$ be a general $6$-dimensional subspace of $V$ that contains $W$.
  We first assume that the intersection $G_2\cap G(2,U)$ is isomorphic
  to a hyperplane section of $\mathbb{P}^2\times \mathbb{P}^2$.
From the above proof, the variety $\mathbb{P}^2\times \mathbb{P}^2\subset G(2,U)$
  may be identified with $2$-dimensional subspaces that has a
  $1$-dimensional intersection with each of $3$-dimensional subspaces $U_{1}$ and $U_{2}$ that together
span $U$.
   The further intersection with $G(2,W)$ therefore depends on whether
$W$ contains $U_{1}$ or $U_{2}$.
    If neither is the case, the intersection $G_2\cap G(2,W)$ is a
linear section of the set of $2$-dimensional
    subspaces that intersect $U_{1}\cap W$ and $U_{2}\cap W$ so it is
a conic section.
    Similarly, if $W\supset U_{1}$, then  $G_2\cap G(2,W)$ is a smooth cubic
surface scroll.

If  $G_2\cap G(2,U)$ is a hyperplane section of
$\mathbb{P}(O_{\mathbb{P}^2}(2)\oplus T_{\mathbb{P}^2}(-1)))$, then
we may use the above equations to identify $G_2\cap G(2,W)$.
In fact if  $U$ is defined by $x_{0}=0$, then $G_2\cap G(2,U)$ spans a
$\mathbb{P}^7$ on which we may choose coordinates
$$x_{12},x_{13},x_{23},x_{15},x_{16},x_{24},x_{25},x_{26},$$ as above, in which
the equations defining the threefold $X_{U}=G_2\cap G(2,U)$ are the $2\times 2$-minors of
the symmetric matrix

$$
\left(
\begin{matrix}
x_{24}&x_{25}&x_{26}\\
x_{25}&-x_{15}&-x_{16}\\
x_{26}&-x_{16}&-x_{12}\\
\end{matrix}
\right)
$$
and the quadrics
$$
(x_{13}\quad x_{23}\quad x_{12})\cdot
\left(
\begin{matrix}
x_{24}&x_{25}&x_{26}\\
x_{25}&-x_{15}&-x_{16}\\
x_{26}&-x_{16}&-x_{12}\\
\end{matrix}
\right).
$$

So $X_{U}$ is a threefold contained in the fourfold cone over a Veronese
surface with vertex a line $L_{U}$. Since the Veronese surface
contains no lines, the only lines in $X_{U}$ are lines that intersect
$L_{U}$ and lie over a unique point on the Veronese surface.  On the
other hand, for each point on the Veronese surface the symmetric
matrix has rank $1$, so the three remaining quadrics restricted to
the corresponding plane in the cone have a common linear factor, so
they define  a line, except in one plane:  In the plane
$P_{U}: x_{15}=x_{16}=x_{24}=x_{25}=x_{26}=0$ the intersection with $X_{U}$
is the double line defined by $x_{12}^2=0$.
Let $W\subset U$ be a $5$-dimensional subspace.  The intersection
$G_2\cap G(2,W)$ is a complete linear section of both $G(2,W)$ and of
$X_{U}$.  Any complete linear surface section of $X_{U}$ is either a
Veronese surface, a cone over a linear section a Veronese surface, or
it contains the vertex line $L_{U}$.  Any complete surface linear
section of $G(2,5)$ that contains a Veronese surface, contains also a
plane.  A surface cone over a linear section of the Veronese surface is
either a quadric or a quartic surface, but neither are contained in
linear sections of $G(2,5)$ that do not also contain a plane, so only
linear sections of $X_{U}$ that contain $L_{U}$ can be linear
sections of $G(2,5)$.   These are easily seen to correspond to
subspaces $W$ defined by $x_{0}=\ell=0$, where $\ell\in
<x_{4},x_{5},x_{6}>$, and for each such $\ell$ the corresponding
intersection $G(2;W)\cap G_2$ is a cubic scroll.
When the intersection $G(2;W)\cap G_2$ is proper in $G(2,V)$ it is
clearly a conic as above.

  Setting up an incidence variety we find,
    $$I=\{(W,U): {\rm dim} G(2,W)\cap G_2>1\}\subset F=\{(W,U):W\subset
U\}$$
    We have seen that the fiber over each $U$ is $2$-dimensional, so
    ${\rm dim}I=6+2=8$.
    The fiber over each $W$ is clearly
$1$-dimensional,
     so there is a $7$-dimensional variety of $5$-dimensional
subspace $W\subset V$ such that $G(2,W)\cap G_2$ is a cubic surface
scroll.

    \end{proof}

    \begin{cor}\label{conic on G} Let $Z$ be a scheme of length $2$ in $G_{2}$ contained in a line $\ell$.
Then one of the following holds:
\begin{enumerate}
 \item The line $\ell$ is contained in $G_2$
 \item There is a unique conic section in $G_2$ that contains
$Z$.
\end{enumerate}
\end{cor}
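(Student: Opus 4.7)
The plan is to translate the statement into linear algebra in $V$ by passing to the $2$-subspaces $V_p, V_q \subset V$ corresponding to the two points of $Z$ (in the non-reduced case $V_q$ is replaced by the image in $V/V_p$ of a tangent direction $\phi \in T_p G_2$; the argument runs in parallel). The dichotomy in the statement matches the dichotomy on $\dim(V_p \cap V_q)$.

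If $\dim(V_p \cap V_q) = 1$, the line $\ell = \overline{pq}$ parameterizes the pencil of $2$-subspaces of the $3$-dimensional $V_p + V_q$ containing the common line, so $\ell \subset G(2, V)$. Since $L := \langle G_2 \rangle$ is linear and already contains $p$ and $q$, it contains $\ell$, hence $\ell \subset G(2, V) \cap L = G_2$; that is case (1). In case (2) we must therefore have $V_p \cap V_q = 0$, $W_4 := V_p + V_q$ of dimension $4$, and $\ell \cap G(2, V) = \{p, q\}$.

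In case (2) I claim the unique conic on $G_2$ through $\{p, q\}$ is $G_2 \cap G(2, W_4)$, which equals $\Lambda \cap Q$ for $Q = G(2, W_4) \subset \mathbb{P}(\wedge^2 W_4) = \mathbb{P}^5$ the Plücker quadric and $\Lambda := L \cap \mathbb{P}(\wedge^2 W_4)$. Because $p, q \in G_2$ we have $\omega^*|_{\wedge^3 V_p} = \omega^*|_{\wedge^3 V_q} = 0$, and every $3$-vector in $W_4 = V_p \oplus V_q$ has at least two factors in one of $V_p, V_q$; therefore $\omega^*|_{\wedge^3 W_4} = 0$, and the contraction $\omega^*\cdot : \wedge^2 W_4 \to V^*$ factors through the $3$-dimensional subspace $W_4^\perp \subset V^*$. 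The crux of the argument is the surjectivity of this induced map $\wedge^2 W_4 \to W_4^\perp$: by the $\mathbb{G}_2$-equivariance all case-(2) pairs lie in a single orbit, so surjectivity may be verified on a single explicit representative such as $(p, q) = (e_0 \wedge e_4, e_1 \wedge e_5)$ in the coordinates fixed at the start of the section, where the image is readily seen to equal $\langle x_2, x_3, x_6 \rangle = W_4^\perp$. With surjectivity in hand, $\Lambda$ is a projective plane and $\Lambda \cap Q$ is a plane conic on $G_2$ through $\{p, q\}$ (possibly degenerating to two lines meeting at a third point of $G_2$).

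For uniqueness, let $C \subset G_2$ be any conic section through $\{p, q\}$. The tautological sub-bundle $S$ on $G(2, V)$ restricts to a rank-$2$ bundle on $C$ with $\det(S|_C) = \mathcal{O}_C(-2)$; the dual $S^*|_C$ is globally generated of degree $2$ on $\mathbb{P}^1$ with $h^0 = 4$, and hence the subspace $W_C \subset V$ spanned by the $2$-planes parameterized by $C$ satisfies $\dim W_C \leq 4$. Combined with $V_p + V_q \subset W_C$ and $\dim(V_p + V_q) = 4$ we obtain $W_C = W_4$, and therefore $C \subset G_2 \cap G(2, W_4)$, forcing $C$ to equal the conic constructed above. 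The main obstacle of the argument is the surjectivity of $\omega^*\cdot : \wedge^2 W_4 \to W_4^\perp$: were it to fail, $\Lambda$ would have dimension at least $3$ and $\Lambda \cap Q$ would be a quadric surface containing a $1$-parameter family of conics through $\{p, q\}$, destroying uniqueness.
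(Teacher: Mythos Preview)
Your overall strategy is sound and genuinely different from the paper's: you work directly with the $4$-space $W_4=V_p+V_q$ and compute $G_2\cap G(2,W_4)=\Lambda\cap Q$, whereas the paper instead invokes Lemma~\ref{W in V} on $5$-dimensional $W$'s and reads off the unique conic inside the resulting conic-or-cubic-scroll. Your uniqueness argument via $h^0(S^*|_C)=4$ is clean and is essentially what the paper uses implicitly.

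However, the transitivity claim is false, and this is a genuine gap. The group $\mathbb{G}_2$ does \emph{not} act transitively on case-(2) pairs: the isomorphism type of the conic $G_2\cap G(2,W_4)$ (smooth vs.\ a pair of lines) is a $\mathbb{G}_2$-invariant of $W_4$, and both types occur. Your own representative $(e_0\wedge e_4,\,e_1\wedge e_5)$ has $\Lambda=\langle e_{04},e_{05},e_{15}\rangle$ and $Q|_\Lambda=e_{04}e_{15}$, a reducible conic; by contrast the pair $(e_0\wedge e_4,\,e_1\wedge e_3)$ is also case~(2), with $W_4=\langle e_0,e_1,e_3,e_4\rangle$, $\Lambda=\langle e_{04},e_{13},e_{03}-e_{14}\rangle$, and $Q|_\Lambda=e_{04}e_{13}-e_{03}^2$, a smooth conic. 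So checking one orbit representative does not suffice.

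The surjectivity of $\omega^*\cdot:\wedge^2 W_4\to W_4^{\perp}$ is nonetheless true in every case. One fix: if the map had rank $\le 2$, there would exist $v\notin W_4$ with $\omega^*(\alpha\wedge v)=0$ for all $\alpha\in\wedge^2 W_4$; combined with $\omega^*|_{\wedge^3 W_4}=0$ this forces $\omega^*|_{\wedge^3 W_5}=0$ for $W_5=W_4+\langle v\rangle$, which is impossible for the generic (i.e.\ $G_2$-type) $3$-form on $\mathbb{C}^7$. Alternatively, non-surjectivity would make $\Lambda\cap Q$ at least a quadric surface inside $G_2\cap G(2,W)$ for any $5$-dimensional $W\supset W_4$, contradicting Lemma~\ref{W in V}. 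Either patch repairs your argument; note that you should also invoke Lemma~\ref{planes on G_2} to exclude $\Lambda\subset Q$.
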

\begin{proof}
Assume that $\ell$ does not lie in $G_2$.
Then any conic section in $G_{2}$ through $Z$ spans a plane contained in
the linear span of any $G(2,5)$ in $G(2,7)$  that contains the $Z$.
But any $G(2,5)$ intersects $G_2$ in either a conic section or a cubic scroll.
 In either case there is a unique conic section in the intersections that contains $Z$.
 \end{proof}
 \begin{rem} Jarek Buczynski, private communication, showed that this property is common to the closed orbit of the
 adjoint representation of any semisimple Lie group. \end{rem}
  \begin{cor}\label{3 points define G(2,W)}  If three points $p,q,r\in G_2$ do not lie on a conic
    section, there is at most one $5$-dimensional subspace
    $W\subset V$ such that $p,q,r\in G(2,W)$.
    \end{cor}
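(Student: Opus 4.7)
The plan is to prove the contrapositive: if two distinct $5$-dimensional subspaces $W_1\neq W_2\subset V$ both contain the $2$-planes $p,q,r$, then $p,q,r$ lie on a conic section in $G_2$. Since $V$ is $7$-dimensional and $W_1\neq W_2$ are both $5$-dimensional, one has $\dim(W_1\cap W_2)\in\{3,4\}$, and the span $U:=\langle p,q,r\rangle\subset V$ satisfies $\dim U\le 4$ because $U\subset W_1\cap W_2$. I treat the cases $\dim U=4$ and $\dim U=3$ separately.

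Assume first $\dim U=4$, so $p,q,r$ lie on the quadric fourfold $G(2,U)\subset\mathbb{P}^5$. A line of $\mathbb{P}^5$ meeting this quadric in three points must be contained in it, but every line of $G(2,4)$ has the form $\{L:v\subset L\subset U_3\}$ with $U_3$ three-dimensional, which would force $U\subset U_3$, impossible. Hence $p,q,r$ span a plane $\pi\subset\mathbb{P}^5$; as they also lie in $G_2\subset\mathbb{P}^{13}$, this plane sits inside $\mathbb{P}^{13}$. If $\pi$ were contained in $G(2,U)$, then $\pi\subset G(2,V)\cap\mathbb{P}^{13}=G_2$, contradicting Lemma \ref{planes on G_2}; therefore $\pi\cap G(2,U)=\pi\cap G_2$ is a degree-$2$ plane curve, i.e.\ a conic section in $G_2$ through $p,q,r$.

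Assume now $\dim U=3$, so $p,q,r$ lie in the plane $G(2,U)\cong\mathbb{P}^2\subset G(2,V)$. The intersection $G_2\cap G(2,U)=\mathbb{P}^2\cap\mathbb{P}^{13}$ is linear in $\mathbb{P}^2$, contains three distinct points, and is not all of $\mathbb{P}^2$ by Lemma \ref{planes on G_2}; hence it is a line $\ell\subset G_2$ through $p,q,r$. To upgrade $\ell$ to a conic section I take a second line $\ell'\subset G_2$ through $p$: the proof of Lemma \ref{planes on G_2} identifies $G_2\cap T_pG_2$ with a cone over a twisted cubic, so lines of $G_2$ through $p$ form a one-parameter family and some $\ell'\neq\ell$ exists. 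Then $\ell\cup\ell'$ is a reducible conic section in $G_2$ containing $p,q,r$.

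The main subtlety is the second case: one cannot simply ``double'' $\ell$ into a conic section, as a non-reduced plane-curve structure on $\ell$ inside $G_2$ would require a plane in $G_2$, which is excluded by Lemma \ref{planes on G_2}; the remedy is to extract an extra line through one of the points from the tangent cone description in that same lemma. Combining both cases gives the contrapositive and the corollary.
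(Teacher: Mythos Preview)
Your argument is correct and follows the same idea as the paper's one-line proof, namely that if $p,q,r\in G(2,W_1)\cap G(2,W_2)$ with $W_1\neq W_2$, then they lie in $G(2,W_1\cap W_2)\cap G_2$, which is at most a conic section. You make this explicit by passing to the span $U\subset W_1\cap W_2$ of the three $2$-planes and splitting on $\dim U\in\{3,4\}$, whereas the paper simply asserts that $G(2,W)\cap G(2,W')\cap G_2$ is ``either a line or a conic section'' and stops there.

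The one genuine addition in your write-up is the treatment of the collinear case $\dim U=3$: the paper's proof leaves implicit that three points on a line of $G_2$ do lie on a conic section, while you actually construct a reducible conic $\ell\cup\ell'$ using the tangent-cone description from Lemma~\ref{planes on G_2}. This is a real (if minor) gap in the paper's sentence that you close cleanly. Your $\dim U=4$ argument is also a bit more explicit than necessary---one could just say $G(2,U)\cap G_2$ is a linear section of a smooth quadric containing no plane, hence at most a conic---but your version is fine.
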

    \begin{proof} If $W$ and $W'$ are distinct $5$-dimensional
        subspace then $G(2,W)\cap G(2,W')\cap G_2$ is either a line
        or a conic section.
        \end{proof}


     \begin{cor}\label{F cap G(2,5)=G_2 cap G(2,5)}  Let $F$ be a smooth hyperplane section of $G_2$ and
$W\subset V$ a $5$-dimensional subspace.
 Then  $F\cap G(2,W)$ is a conic section if and only if $F\cap G(2,W)=G_2\cap
G(2,W)$ and $G_2\cap G(2,W)$ is a conic section.
$F\cap G(2,W)$ is a cubic scroll if and only if $F\cap G(2,W)=G_2\cap
G(2,W)$ and $G_2\cap G(2,W)$ is a cubic scroll.
    \end{cor}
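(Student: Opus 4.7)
The plan is to reduce the corollary to the previous lemma together with a simple degree/dimension count. Write $F = G_2\cap H$ for a hyperplane $H\subset\mathbb{P}(\wedge^2 V)$. Then
\[
 F\cap G(2,W) \;=\; G_2\cap G(2,W) \cap H \;=\; X\cap H, \qquad X:=G_2\cap G(2,W).
\]
By Lemma \ref{W in V}, $X$ is either a (possibly singular) conic section of pure dimension $1$ and degree $2$, or a (possibly singular) cubic surface scroll of pure dimension $2$ and degree $3$. So in each case we only have to analyse how the additional hyperplane $H$ cuts $X$.

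The cubic scroll direction of the equivalence is immediate. The reverse implication is obvious; for the forward one, if $F\cap G(2,W)$ is a cubic scroll then it is a surface contained in $X$, and since $\dim X\le 2$ we must have $X$ a cubic scroll with $F\cap G(2,W)=X$ (two irreducible surfaces of the same dimension, one contained in the other).

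For the conic direction, suppose $F\cap G(2,W)$ is a conic section. It is contained in $X$ and has dimension $1$ and degree $2$. If $X$ is already a conic, then $X$ and $F\cap G(2,W)$ are equidimensional closed subschemes of the same degree with one contained in the other, hence equal. It remains to rule out the case where $X$ is a cubic scroll. In that case $F\cap G(2,W)=X\cap H$; if $X\subseteq H$ the intersection is the whole scroll, which is $2$-dimensional and not a conic, a contradiction, while if $X\not\subseteq H$ then $H$ meets $X$ properly, so $X\cap H$ has pure dimension $1$ and degree $3$, again not a conic. Thus $X$ must be a conic and equal to $F\cap G(2,W)$, proving the equivalence. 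The reverse implication for conics is trivial.

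The only delicate point is the degree argument when $X$ may be singular; here it is handled simply by the fact that Lemma \ref{W in V} guarantees $X$ has pure dimension and known degree in each alternative, so the standard inequality $\deg(X\cap H)=\deg X$ (for $H$ not containing $X$) applies, ruling out the mixed possibilities cleanly.
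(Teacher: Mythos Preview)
Your argument is correct and follows the same line as the paper's proof: write $F\cap G(2,W)=(G_2\cap G(2,W))\cap H$, invoke Lemma~\ref{W in V} to know that $X=G_2\cap G(2,W)$ is a conic or a cubic scroll, and observe that a proper hyperplane cut of either cannot again be a conic (respectively a cubic scroll). The paper's own proof compresses this into two sentences, whereas you have written out the degree and dimension dichotomy explicitly, but the content is identical.
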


    \begin{proof} Since $F$ defines a hyperplane section, it either
contains $G_2\cap G(2,W)$ or it intersects it in a hyperplane section.
But $G_2\cap G(2,W)$ is either a conic section or a cubic scroll, so the
corollary follows.
    \end{proof}

         \begin{cor}  Let $L\subset \mathbb{P}^{13}$ be a linear subspace of codimension $2$ or $3$, such that
         $Y= G_2\cap L$ is a smooth $3$-fold (resp. surface) with Picard group
         generated by $H$.  If $F$ and $F'$
         are distinct hyperplane sections of $G_2$ that contain
         $Y$, and $S_{W}$ and $S_{W'}$ are cubic scrolls on
         $F$ and $F'$ respectively, then $S_{W}\cap Y$ and
         $S_{W'}\cap Y$ are either empty or distinct subschemes on $Y$.
    \end{cor}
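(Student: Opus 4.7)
The plan is to argue by contradiction: assume $W\neq W'$ but $C := S_W\cap Y=S_{W'}\cap Y$ is non-empty. By Corollary~\ref{F cap G(2,5)=G_2 cap G(2,5)} each scroll equals $G_2\cap G(2,W)$ (resp.\ $G_2\cap G(2,W')$), so $C\subset G(2,W)\cap G(2,W')$. The goal is to produce three points $p,q,r\in C$ not lying on any conic section of $G_2$; Corollary~\ref{3 points define G(2,W)} will then force $W=W'$, the desired contradiction.

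In the threefold case $F\cap F'=Y$, so $C=S_W\cap F'$ is a hyperplane section of the cubic scroll $S_W\cong\mathbb F_1$, generically a smooth twisted cubic of class $H=e+2f$. For three general points $p,q,r\in C$ the chord $\overline{pq}$ is not contained in $G_2$, so Corollary~\ref{conic on G} gives a unique conic section $K\subset G_2$ through $p,q$. On the other hand, the two-dimensional linear system $|e+f|$ on $S_W$ contains a conic through $p,q$, and this conic lies in $S_W\subset G_2$, so by uniqueness it coincides with $K$. Hence $K\subset S_W$, and the intersection number $C\cdot K=(e+2f)\cdot(e+f)=2$ forces $r\notin K$ for general $r$, so $p,q,r$ lie on no conic section of $G_2$. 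In the surface case $C=S_W\cap L$ is a length-three scheme in $\langle S_W\rangle\cong\mathbb P^4$, generically three distinct points. The hypothesis $\operatorname{Pic}(Y)=\mathbb Z\cdot H$ with $H^2=18$ precludes lines and conics on $Y$, since their $H$-degrees ($1$ and $2$) are not divisible by $18$. Collinearity of $p,q,r$ on a line $\ell$ would give $\ell\cap G_2$ of length at least three, forcing $\ell\subset G_2$; the three points would then force $\ell\subset L$, hence $\ell\subset Y$, impossible. If $p,q,r$ lay on a conic section $K\subset G_2$, non-collinearity would make $\langle K\rangle$ a plane meeting $L$ in a linear subspace containing three non-collinear points, so $\langle K\rangle\subset L$ and thus $K\subset G_2\cap L=Y$, again impossible. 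Applying Corollary~\ref{3 points define G(2,W)} concludes both cases.

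The main obstacle I anticipate is controlling degenerate configurations of $C$: when $C$ is a reducible or non-reduced hyperplane section of the scroll in the threefold case, or a non-reduced length-three scheme in the surface case. For instance, if $C$ contains a conic $K'$ lying on $G_2$, three points on $K'$ trivially share that conic section, so one must take at least one of the three points off $K'$; the bound $C\cdot K=2$ on $\mathbb F_1$ together with the uniqueness in Corollary~\ref{conic on G} continue to produce the required triple once this adjustment is made. The Picard-rank exclusions in the surface case remain effective for non-reduced $C$ after replacing collinearity and coplanarity by their scheme-theoretic analogues.
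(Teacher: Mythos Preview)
The paper states this corollary without proof, so there is no explicit argument to compare against. Your approach---showing that $C$ carries a length-three configuration not lying on any conic of $G_2$, then invoking Corollary~\ref{3 points define G(2,W)} to recover $W$ uniquely from $C$---is precisely the mechanism the paper uses later in Lemma~\ref{degenerate sections of bundles vanish on scrolls} and in the proof of Proposition~\ref{injectivity}. In that sense your argument is the intended one, and the surface case is handled cleanly.

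There is, however, a genuine gap in your setup. You open by \emph{assuming} $W\neq W'$ and then treat the conclusion $W=W'$ as the desired contradiction. But nothing in the hypotheses excludes $W=W'$: a priori one could have $S_W=S_{W'}=G_2\cap G(2,W)$ lying on both $F$ and $F'$ simultaneously. Your argument, as written, only establishes $W=W'$; it does not explain why that is absurd. The missing step is the Picard argument you already deploy elsewhere: if $W=W'$ then $S_W\subset F\cap F'$, so in the threefold case $S_W\subset Y$, giving a degree-$3$ effective divisor on a smooth threefold with $\operatorname{Pic}(Y)=\mathbb{Z}\cdot H$ and $H^3=18$---impossible; in the surface case $S_W\cap Y$ becomes a full hyperplane section of the scroll, a degree-$3$ curve on $Y$, again excluded by $\operatorname{Pic}(Y)=\mathbb{Z}\cdot H$. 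The paper makes exactly this observation at the start of the proof of Proposition~\ref{injectivity} (``$S$ does not contain any twisted cubic curve. It follows that $F_1$ and $F_2$ do not contain any common cubic scroll''). Once you append this, the contradiction is complete. A secondary point: your intersection-theory computation assumes $S_W\cong\mathbb{F}_1$, but the paper allows the scroll to be a cubic cone (cf.\ Remark~\ref{conic on scroll}); the argument still goes through, but you should say a word about that case.
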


\begin{prop}\label{embedded F intersected with G(2,5)}
Let $F$ be any smooth hyperplane section of $G_2$. Let $U$ be a
$6$-dimensional vector space and let $u$
and $W$ be a nontrivial vector and a $5$-dimensional subspace of $U$
respectively .
Let $\phi:F\rightarrow G(3,U)$ be a linear embedding. Then the
intersections $\phi(F)\cap G(3,W)$,
$\phi(F)\cap F(u,3,U)$ are linearly isomorphic to one of the
following:
\begin{enumerate}
 \item a conic section
\item a cubic surface scroll in $\mathbb{P}^4$
\end{enumerate}
Moreover every conic section in $\phi(F)$ lies in the intersection of a
unique $F(u,3,U)$ and a unique $G(3,W)$, while every cubic scroll in
$\phi(F)$ lies in a unique $F(u,3,U)$ or a unique $G(3,W)$.
\end{prop}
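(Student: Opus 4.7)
\emph{Setup.} Both $G(3,W)$ and $F(u,3,U)$ are copies of $G(2,5)$ linearly embedded in $G(3,U)\subset\mathbb{P}(\wedge^3 U)=\mathbb{P}^{19}$, and each spans a $\mathbb{P}^9$. Since $\phi(F)\subset G(3,U)$, the intersection $\phi(F)\cap G(3,W)$ coincides with $\phi(F)\cap L_W$, where $L_W:=\langle\phi(F)\rangle\cap\langle G(3,W)\rangle$ is a linear subspace of $\langle\phi(F)\rangle=\mathbb{P}^{12}$; analogously set $L_u:=\langle\phi(F)\rangle\cap\langle F(u,3,U)\rangle$. A dimension count in $\mathbb{P}^{19}$ gives $\dim L_W\geq 12+9-19=2$, and likewise for $L_u$.

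\emph{Dimension dichotomy.} The heart of the argument is to show $\dim L_W,\dim L_u\in\{2,4\}$. I would pass to the dual picture in $\mathbb{P}(\wedge^3 U^*)$: the perp $\langle G(3,W)\rangle^\perp$ is the $\mathbb{P}^9$ of 3-forms $\nu\wedge\gamma$ with $\nu\in W^\perp\subset U^*$ fixed and $\gamma\in\wedge^2 U^*$, while $\langle F(u,3,U)\rangle^\perp$ is the $\mathbb{P}^9$ of 3-forms annihilated by contraction with $u$. Translating via perps, $\dim L_W=2+1+\dim(\langle\phi(F)\rangle^\perp\cap\langle G(3,W)\rangle^\perp)$ when the right-hand intersection is nonempty, and equals $2$ otherwise. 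Generalizing Lemma~\ref{lemm G(2,5) cap phi(F) in example} to an arbitrary linear embedding, I would argue that the fixed $\mathbb{P}^6=\langle\phi(F)\rangle^\perp$ meets each member of the family $\{\langle G(3,W)\rangle^\perp\}_{W}$ (resp.\ $\{\langle F(u,3,U)\rangle^\perp\}_u$) either in the empty set or in a single line, never a single point nor a higher-dimensional subspace. This forces $\dim L_W,\dim L_u\in\{2,4\}$.

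\emph{Identification of the intersection.} In the case $\dim L_W=2$, the scheme $\phi(F)\cap G(3,W)=\phi(F)\cap L_W$ sits inside a plane and has the expected dimension $4+6-9=1$, so it is a curve in $\mathbb{P}^2$; a degree computation, e.g.\ via the intersection product $[\phi(F)]\cdot\sigma_{1,1,1}$ in $G(3,U)$, identifies it as a conic. In the case $\dim L_W=4$ the intersection lives in a $\mathbb{P}^4$; the classification of two-dimensional linear sections of $G(2,5)$, together with the excess-intersection count and the fact that $\phi(F)$ contains no plane, forces this section to be a cubic surface scroll in $\mathbb{P}^4$. The analysis for $F(u,3,U)$ is dual. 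In each case I would cross-check the result by matching it, through the $G_2$ embedding of $F$, with the classification of conic sections and cubic scrolls on $F$ via Corollary~\ref{F cap G(2,5)=G_2 cap G(2,5)}.

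\emph{Uniqueness and main obstacle.} For the last sentence of the proposition: a conic $C\subset\phi(F)$ spans a plane $\Pi$, and $W$ is uniquely recovered as the sum of the 3-dimensional subspaces of $U$ parameterized by $\Pi$, while $u$ is uniquely recovered as their common 1-dimensional intersection, so $C$ lies in a unique $G(3,W)$ and a unique $F(u,3,U)$. A cubic scroll spans a $\mathbb{P}^4$ whose parameterized 3-planes either all lie in a single $W$ (uniquely determined) or all contain a single $u$ (uniquely determined), but not both, giving the stated dichotomy. The main obstacle is the dimension dichotomy step: upgrading the explicit verification carried out for the family $H^\lambda_{12}$ in Lemma~\ref{lemm G(2,5) cap phi(F) in example} to an arbitrary linear embedding $\phi$. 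This seems to require exploiting the rigidity of the pair $(\phi^*\mathcal{Q},\phi^*\mathcal{Q}^*)$ of rank-$3$ bundles with six sections on $F$ (Kuznetsov's bundles), so as to force $\langle\phi(F)\rangle^\perp$ to meet the universal family $\{\langle G(3,W)\rangle^\perp\}$ and its dual in precisely the same intersection-theoretic pattern as in the example.
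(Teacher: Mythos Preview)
Your proposal has a genuine gap, and it is precisely the one you flag as the ``main obstacle.''  The dimension dichotomy $\dim L_W\in\{2,4\}$ is what carries all the weight of your argument, yet you only have it (via Lemma~\ref{lemm G(2,5) cap phi(F) in example}) for the explicit family $H^\lambda_{12}$, established by a Macaulay~2 computation.  Your suggested upgrade---invoking rigidity of the pair $(\phi^*\mathcal{Q},\phi^*\mathcal{E}^*)$---is circular: in the paper's logical order, Proposition~\ref{embedded F intersected with G(2,5)} is the main ingredient in the proof of uniqueness of the embedding (see the proof of uniqueness in Theorem~\ref{embedding}), and the rigidity of these bundles is exactly what that uniqueness amounts to.  So you cannot assume it here.

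The paper's route avoids this entirely by reversing the roles of your ``identification'' and ``cross-check'' steps.  Rather than first pinning down $\dim L_W$ and then reading off the intersection, the paper argues intrinsically: whatever $\phi(F)\cap G(3,W)$ is, it is simultaneously (a) a complete linear section of a $G(2,5)$ inside $G(3,U)$, and (b) linearly isomorphic to a subvariety of $F\subset G_2\subset G(2,V)$.  One then runs through the possible codimensions of $T=L\cap G(2,5)$ inside $L$.  Codimension~3 is excluded because a generic $\mathbb{P}^3$-slice would give five points of $F\subset G(2,7)$ spanning a $\mathbb{P}^3$; these lie in a unique $G(2,W')\subset G(2,7)$, and Lemma~\ref{W in V} forces $G_2\cap G(2,W')$ to be a cubic scroll, contradicting codimension~3.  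Codimension~2 forces degree at most~3 by \cite[Prop.~2.5]{Muk8}, hence a cubic scroll (planes and quadric surfaces being excluded by Lemma~\ref{planes on G_2} and the structure of $G_2$).  Codimension~1 forces a conic.  Codimension~0 (a line) is ruled out by adjunction on $F$.  Thus the dichotomy $\dim L_W\in\{2,4\}$ is an \emph{output} of the argument, not an input.  Your ``cross-check'' via the $G_2$ embedding is in fact the primary mechanism; promoting it to that role closes the gap.

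For the final sentence on cubic scrolls, your sketch is correct in spirit, but the paper isolates this as a separate lemma (Lemma~\ref{scroll in G(3,6)}) with a Schubert-calculus argument on the degrees of the swept variety $X_S$ and the scroll of common lines $Y_S$; you would need something equivalent to justify that a cubic scroll in $G(3,U)$ with $S=\langle S\rangle\cap G(3,U)$ lies in exactly one of $G(3,B)$ or $F(A,3,U)$.
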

\begin{proof}
Observe first that $G(3,W)$ and the partial Flag variety $F(u,3,U)$ are $6$-dimensional
Schubert cycles on $G(3,U)$ that intersect the $4$-dimensional
subvariety $\phi(F)$ in dimension at least one.   Furthermore, both
$G(3,W)$ and $F(u,3,U)$ are isomorphic to the Grassmannian $G(2,5)$,
so the considered intersections  are linearly isomorphic to some
linear sections of the Grassmannian $G(2,5)\subset\mathbb{P}^9$.
Therefore, let us consider a  linear subspace $L\subset
\mathbb{P}^9=<G(2,5)>$, and see when $L\cap G(2,5)$ is spanned by a
variety that is linearly isomorphic to a subvariety of $F$ and hence
of $G_2$. We have the following possibilities:
\begin{itemize}
\item $T=L\cap G(2,5)$ is a variety of codimension 3 in $L$. Then
this intersection is proper, hence $\operatorname{deg}(T)=5$ and a generic linear
section of appropriate codimension is a set of five isolated points
spanning a $\mathbb{P}^3$. Such five points in $F\subset G(2,7)$ lie in a unique $G(2,5)\subset G(2,7)$,
so by Lemma \ref{W in V}, the intersection $G(2,5)\cap G_2$ must be a cubic scroll.
  Therefore $F\cap G(2,5)$ has codimension $2$ in its linear span, against the assumption.
\item $T=L\cap G(2,5)$ is a variety of codimension 2 in $L$. Then the
generic intersection with a plane in $L$ is a set of isolated points.
By \cite[prop 2.5]{Muk8} the number of these points cannot be greater
than 3. It follows that $T$ has degree 3. To be contained in $G_2$ it
cannot contain planes nor quadric surfaces, hence it has to be a
smooth or singular cubic surface scroll.
\item $T=L\cap G(2,5)$ is a variety of codimension 1 in $L$. Then as
above $T$ has to be of degree 2 and hence to be contained in $G_2$ it
needs to be a conic section.
\item $L\subset G(2,5)$. Then to be contained in $G_2$ it has to be a
line. But by adjunction a line cannot be the intersection of $G(2,5)$
with $\phi(F)$. Indeed, if $C$ is a smooth curve of intersection of
$G(2,5)$ with $\phi(F)$ then $C$ is linearly isomorphic to the zero
locus of a rank 3 bundle on $F$ with determinant $H$. By adjunction
$K_C=(H+K_F)|_C=-H|_C$ which implies $C$ is a conic section.
\end{itemize}
Hence the first part of the Proposition follows.
To prove the second part we consider each case separately.
Each conic section in $G(3,6)$ is contained in a unique partial Flag variety $F(A,3,B)$ where $A$
and $B$ are fixed vector spaces of dimension $1$ and $5$ respectively.  So the conic section lies in
both $G(3,B)$ and in $F(A,3,U)$.
For a cubic surface scroll $S$ we need the following:
\begin{lemm}\label{scroll in G(3,6)}
 For any cubic surface scroll  $S\subset G(3,U)$ such that $S=<S>\cap G(3,U)$
  there is a subspace $A$ of dimension $1$ or a subspace $B$ of dimension $5$, such that $S\subset F(A,3,U)$ or $S\subset G(3,B)$.
\end{lemm}
\begin{proof}  The union of planes in $\mathbb{P}(U)$ parameterized by $S\subset G(3,U)$ form a variety $X_{S}\subset\mathbb{P}(U)$.
The lines in $S$ defines pencil of planes in $\mathbb{P}(U)$ that each have a common line and fill a $\mathbb{P}^3$ in $X_{S}$.
The common lines form a scroll $Y_{S}\subset X_{S}$.
The degree of $S$ in the Pl\"ucker embedding equals the sum of degrees of $X_{S}$ and $Y_{S}$.  In fact the degree of $S$ is, by Schubert calculus,  the sum of the degree of the
 intersection of $S$ with the cycle of planes that meet a general line, and the general cycle of planes that meet a general $\mathbb{P}^3$ in a line.
 The former is clearly the degree of $X_{S}$, while the latter is the degree of $Y_{S}$, since a general  $\mathbb{P}^3$ meet a plane in $S$ in a line if and only if it meets the common line of the pencil
  that the planes belongs to in a point.
 If $X_{S}$ is not a fourfold, all its $\mathbb{P}^3$s coincide and $<S>\subset G(3,6)$, contrary to the assumption.
 If $X_{S}$ is a fourfold of degree $1$ it spans a hyperplane $\mathbb{P}(W)\subset\mathbb{P}(U)$, and $S\subset G(3,W)$.
 If $X_{S}$ is a fourfold of degree $2$, then $Y_{S}$ has degree $1$, which means that $Y_{S}$ is a plane and the lines in $Y_{S}$ have a common point $u$.  So in this case  $S\subset F(u,3,U)$.
If $X_{S}$ is a fourfold of degree $3$, then $Y_{S}$ is a line, a common line for all the planes in $S$, in which case  $<S>\subset G(3,6)$, contrary to the assumption.
\end{proof}
The intersection
$G(3,B)\cap F(A,3,U)=F(A,3,B)$ so the intersection with $\phi(F)$
cannot contain $S$, so any cubic surface scroll $S\subset\phi(F)$ is contained in precisely one of them.
\end{proof}
\begin{cor}\label{G not on g36} The variety $G_{2}$ does not admit a
    linear embedding into the Grassmannian $G(3,6)$.
    \end{cor}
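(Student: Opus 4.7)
The plan is to compare the dimension of the Hilbert scheme $\mathcal{H}$ of cubic surface scrolls on $G_2$ as computed from two viewpoints: the intrinsic embedding $G_2\subset G(2,V)$ with $\dim V=7$, and the hypothetical embedding $G_2\subset G(3,U)$ with $\dim U=6$. The strategy is to show these give $\dim\mathcal{H}\geq 7$ and $\dim\mathcal{H}\leq 5$ respectively, yielding the desired contradiction.

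For the upper bound, I would suppose $G_2\subset G(3,U)$ linearly. Any cubic surface scroll $S\subset G_2$ then lies in $G(3,U)$, and its span $\langle S\rangle$ is a $\mathbb{P}^4$. Since the maximal linear subspaces of $G(3,6)$ have dimension $3$, we have $\langle S\rangle\not\subset G(3,U)$, so the hypothesis of Lemma \ref{scroll in G(3,6)} is automatically satisfied (that hypothesis is used in the proof only to exclude the case $\langle S\rangle\subset G(3,U)$). Therefore $S$ is contained in a unique $G(3,W')$ with $W'\in G(5,U)$ or a unique $F(u,3,U)$ with $u\in\mathbb{P}(U)$. Since $G(5,U)\cong\mathbb{P}^5$ and $\mathbb{P}(U)\cong\mathbb{P}^5$ are both $5$-dimensional, $\mathcal{H}$ has dimension at most $5$.

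For the lower bound, Lemma \ref{W in V} produces a $7$-dimensional subvariety $\Sigma\subset G(5,V)$ of $W$'s with $S_W:=G_2\cap G(2,W)$ a cubic surface scroll. I would then verify that the resulting map $\Sigma\to\mathcal{H}$ is generically injective: for generic $W\in\Sigma$ the union of the $2$-planes in $V$ parameterized by points of $S_W$ sweeps out precisely $\mathbb{P}(W)$, so $W$ is recovered from $S_W$. This gives $\dim\mathcal{H}\geq 7$, contradicting the preceding bound and establishing the corollary.

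The main technical obstacle is the generic injectivity in the last step, which amounts to the analog of Lemma \ref{scroll in G(3,6)} for $G(2,V)$: any cubic scroll in $G(2,V)$ is contained in a unique $G(2,W)$ with $W\subset V$ of dimension $5$, or a unique $F(A,2,V)$ with $A\subset V$ of dimension $1$. This should follow by repeating the proof of Lemma \ref{scroll in G(3,6)} verbatim, replacing $3$-dimensional subspaces of $U$ by $2$-dimensional subspaces of $V$ and noting that $G(2,7)$ also contains no $\mathbb{P}^4$.
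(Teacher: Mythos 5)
Your lower bound is essentially in order: the $G(2,V)$-analogue of Lemma \ref{scroll in G(3,6)} that you flag as the ``main technical obstacle'' is in fact already proved in the paper, as Lemma \ref{cubic scroll in G(2,7) is in G(2,5)}. (One small slip there: the union of the lines parameterized by a cubic scroll $S_W$ is only a threefold --- a $2$-parameter family of lines cannot sweep out $\mathbb{P}(W)=\mathbb{P}^4$ --- so what you should say, and what the degree analysis in that lemma gives generically, is that this union \emph{spans} $\mathbb{P}(W)$, which still recovers $W$ from $S_W$.) Your reading of Lemma \ref{scroll in G(3,6)} is also correct: its hypothesis $S=\langle S\rangle\cap G(3,U)$ is used in the proof only to kill the cases where $\langle S\rangle\subset G(3,U)$, and those cases are indeed vacuous here since $\langle S\rangle$ is a $\mathbb{P}^4$ while the maximal linear subspaces of $G(3,6)$ are $\mathbb{P}^3$'s.

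The genuine gap is in the upper bound. Uniqueness of the enveloping $G(3,W')$ or $F(u,3,U)$ gives a \emph{morphism} from the Hilbert scheme $\mathcal{H}$ of cubic scrolls on $G_2$ to the union of two $\mathbb{P}^5$'s, but this bounds $\dim\mathcal{H}$ by $5$ only if the fibers of that morphism are finite, i.e.\ only if each fixed $G(3,W')$ and each fixed $F(u,3,U)$ contains at most finitely many cubic scrolls lying on $G_2$. Nothing in your argument addresses this, and the paper's own Lemma \ref{W in V} shows the danger is real: in the genuine embedding $G_2\subset G(2,V)$, a single sub-Grassmannian $G(2,U)$ with $[U]\in Q$ meets $G_2$ in a fourfold $X_U$ carrying a full $\mathbb{P}^2$ of cubic scrolls, so fibers of this type of map need not be points. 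If the fibers in your setting could likewise be $2$-dimensional, your bound degrades to $\dim\mathcal{H}\le 7$ and the contradiction with the lower bound $\ge 7$ evaporates. Closing the gap takes real work: one would first bound $\dim\bigl(G_2\cap G(3,W')\bigr)\le 3$ --- a $4$-dimensional intersection would be an effective divisor on $G_2$, hence (as $\operatorname{Pic}(G_2)=\mathbb{Z}H$) would contain a full irreducible member of $|H|$, which spans a $\mathbb{P}^{12}$ and cannot fit inside the $\mathbb{P}^9$ spanned by $G(3,W')$ --- and then exclude positive-dimensional families of cubic-scroll divisors on the resulting threefolds, a case analysis comparable in effort to the paper's actual proof. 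That proof follows an entirely different and shorter route through conics rather than scrolls: an embedding $G_2\subset G(3,U)$ induces one on each smooth hyperplane section $F$; Proposition \ref{embedded F intersected with G(2,5)} forces $F\cap G(3,B)$ to be a conic section for general $5$-dimensional $B\subset U$; hence $G_2\cap G(3,B)$ would be a quadric surface, through two general points of which there pass infinitely many conics, contradicting Corollary \ref{conic on G}.
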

    \begin{proof} Let $U$ be a $6$-dimensional vector space.  An
embedding $G_{2}\subset G(3,U)$ induces an
embedding of any hyperplane section $F$ of $G_{2}$.  By
Proposition \ref{embedded F intersected with G(2,5)} the
intersection
$F\cap G(3,B)$ is a conic section for a general $5$-dimensional
subspace $B\subset U$. But then  $G_{2}\cap G(3,B)$ must
be a quadric surface, against the fact that there is either a
line or a unique
conic section through two points on $G_{2}$ (cf. Corollary
\ref{conic on G}).
\end{proof}
 \begin{rem}\label{conic on scroll} On any cubic surface scroll $\Sigma$, the Hilbert scheme $H_\Sigma$ of conic sections is a $\mathbb{P}^2$.
 In a cubic cone, the conic sections are all singular while the double lines form themselves a conic section $C_{\Sigma}$ in $H_\Sigma$.
  Any pencil of singular conics on $\Sigma$ with a fixed line form a tangent line to  $C_{\Sigma}$.
 On a smooth cubic scroll the general conic section is smooth, while the singular ones form  a line in $H_\Sigma$.
 \end{rem}

The Proposition \ref{conics and scrolls in F} is an immediate corollary of the following
\begin{prop}\label{hilb scheme of conics} The Hilbert scheme $H_F$ of conic sections
lying on a smooth hyperplane section $F$ of $G_2$ is isomorphic to the graph in $\mathbb{P}_{1}^5\times\mathbb{P}_{2}^5$
of the Cremona transformation $\gamma: \mathbb{P}_{1}^5\cdots >\mathbb{P}_{2}^5$ defined
by the linear system of quadrics that contain a Veronese surface $V_{1}\subset \mathbb{P}_{1}^5.$
The inverse Cremona transformation is defined by the quadrics that contain a Veronese surface $V_{2}\subset \mathbb{P}_{2}^5.$

    The Hilbert scheme $V_{F}$ of cubic surface scrolls in $F$
    has two components each of which is isomorphic to $\mathbb{P}^2$.
    Each component has a natural embedding as the Veronese surface, $V_{i}\subset\mathbb{P}_{i}^5, i=1,2$,
    and the Hilbert scheme of conics $H_{F}$ restricts
    to a $\mathbb{P}^2$-bundle over these two Veronese surfaces.
\end{prop}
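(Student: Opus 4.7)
The plan is to parametrize conic sections and cubic scrolls on $F$ via Schubert conditions in $G(3,U)$, using the embedding $\phi:F\hookrightarrow G(3,U)$ from Section~\ref{sec embedding}. Set $\mathbb{P}_1^5=\mathbb{P}(U^*)$, the parameter space of $5$-dimensional subspaces $W\subset U$, and $\mathbb{P}_2^5=\mathbb{P}(U)$, that of lines $\langle u\rangle\subset U$. By Proposition~\ref{embedded F intersected with G(2,5)}, each conic $C\subset\phi(F)$ lies in a unique $G(3,W_C)$ and a unique $F(u_C,3,U)$, giving an injective morphism
\[\alpha:H_F\longrightarrow \mathbb{P}_1^5\times\mathbb{P}_2^5,\qquad C\longmapsto ([W_C],[u_C]).\]
Write $\pi_i=\mathrm{pr}_i\circ\alpha$. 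Whenever $\phi(F)\cap G(3,W)$ is a proper intersection it is a single conic (Proposition~\ref{embedded F intersected with G(2,5)}), which equals $\pi_1^{-1}([W])$, so $\pi_1$ is birational; symmetrically so is $\pi_2$. Hence $H_F$ is the closure of the graph of a Cremona transformation $\gamma:\mathbb{P}_1^5\cdots >\mathbb{P}_2^5$.

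Next I would identify the Hilbert scheme of cubic scrolls. A point $[W]\in\mathbb{P}_1^5$ lies in the indeterminacy of $\gamma$ exactly when $\phi(F)\cap G(3,W)$ is improper, hence (by Proposition~\ref{embedded F intersected with G(2,5)}) a cubic surface scroll. By Corollary~\ref{cor G(2,5) cap phi(F) in example}---whose computation transports from the explicit embedding of Section~\ref{example} to any linear embedding of a smooth $F$---this locus is a Veronese surface $V_1\subset\mathbb{P}_1^5$, and symmetrically one obtains a Veronese surface $V_2\subset\mathbb{P}_2^5$. The dichotomy in Lemma~\ref{scroll in G(3,6)}, combined with Proposition~\ref{embedded F intersected with G(2,5)}, shows that every cubic scroll in $\phi(F)$ lies in a unique $G(3,W)$ \emph{or} a unique $F(u,3,U)$, and these two alternatives are mutually exclusive since $G(3,W)\cap F(u,3,U)=F(u,3,W)$ is too small to contain a scroll. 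Therefore $V_F=V_1\sqcup V_2$.

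The $\mathbb{P}^2$-bundle structure now follows from the fiber description: by Remark~\ref{conic on scroll}, the fiber of $\pi_1$ over $[W]\in V_1$ is the $\mathbb{P}^2$ of conics on the cubic scroll $\phi(F)\cap G(3,W)$, and symmetrically for $\pi_2$ over $V_2$. It remains to identify $\gamma$ as the Cremona transformation defined by $|\mathcal{I}_{V_1}(2)|$. For this I would analyze the birational model $\pi_1:H_F\to\mathbb{P}_1^5$: since $\pi_1$ contracts a $\mathbb{P}^2$-bundle onto $V_1$, the linear system on $\mathbb{P}_1^5$ defining $\gamma$ necessarily vanishes along $V_1$, and a bidegree computation of $\alpha(H_F)\subset\mathbb{P}_1^5\times\mathbb{P}_2^5$ pins the degree of $\gamma$ to $2$, giving exactly the $6$-dimensional system $H^0(\mathbb{P}_1^5,\mathcal{I}_{V_1}(2))$.

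I expect the main obstacle to be this final degree identification---ruling out the \emph{a priori} possibility that $\gamma$ is defined by a larger linear system vanishing to higher order on $V_1$. The cleanest route is an explicit calculation in the example of Section~\ref{example}: produce the six quadrics defining $\gamma$ and verify that they form a basis of $H^0(\mathbb{P}_1^5,\mathcal{I}_{V_1}(2))$. The result then propagates to arbitrary smooth $F$ either by the uniqueness of the embedding (Theorem~\ref{embedding}) or, more directly, by a family argument over the irreducible parameter space of smooth hyperplane sections of $G_2$.
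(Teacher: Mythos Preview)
Your approach is essentially the paper's: parametrize conics by the pair $([W_C],[u_C])$, identify the indeterminacy loci as the two Veronese surfaces via Corollary~\ref{cor G(2,5) cap phi(F) in example}, and read off the $\mathbb{P}^2$-fibers from Remark~\ref{conic on scroll}. Two points deserve comment.

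First, your proposed route of invoking the uniqueness part of Theorem~\ref{embedding} to propagate from the explicit example to an arbitrary smooth $F$ is circular: that uniqueness is proved in Section~\ref{unique} \emph{using} this proposition. The paper avoids this by noting (via Lemma~\ref{pencil of orbits}) that the explicit pencil of Section~\ref{example} already meets every nondiscriminant orbit, so every smooth $F$ is linearly isomorphic to some $F^\lambda$ and one may simply work with the example embedding throughout; no transport to ``any linear embedding'' is needed. Your family argument is a legitimate alternative, but the orbit observation is cleaner.

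Second, for the final identification of $\gamma$ with the quadric Cremona, the paper does not compute: it argues geometrically that each $\mathbb{P}^2$-fiber $H_\Sigma$ over $V_1$ is mapped by $\pi_2$ isomorphically onto a plane in $\mathbb{P}(U^*)$ (by analyzing which $G(2,4)$'s cut the scroll in a conic), that these image planes pairwise meet only along $V_2$, and hence that $\gamma^{-1}$ contracts exactly the secant planes of $V_2$ with base locus $V_2$---forcing $\gamma^{-1}$ to be given by $|\mathcal{I}_{V_2}(2)|$. This sidesteps the bidegree computation you flagged as the main obstacle; your proposed explicit verification in the example would also work but is less illuminating.
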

\begin{proof} Let $U$ be a $6$-dimensional vector space and let $F\to G(3,U)$ be a linear embedding as in example \ref{example}.
By Proposition \ref{embedded F intersected with G(2,5)} each conic $C$ in $F$ is the intersection
$F(u_{C},3,U)\cap G(3,V_{C})\cap F$ for a unique  one dimensional subspace $<u_{C}>\in U$ and a unique codimension one subspace $V_{C}\subset U$.
Therefore the Hilbert scheme $H_F$ has a natural embedding  in $\mathbb{P}(U)\times \mathbb{P}(U^*)$
   taking a conic $C$ to the unique pair $([u_{C}],[V_{C}])$.
Furthermore, by Lemma \ref{lemm G(2,5) cap phi(F) in example} and Corollary \ref{cor G(2,5) cap phi(F) in example} , for $u\in U$, the intersection $F(u,3,U)\cap F$ either a conic section or a cubic surface scroll
and similarly for $G(3,V)\cap F$ when $V\subset U$ is a codimension one subspace.
  The Hilbert scheme $V_{F}$ of cubic surfaces in $F$ is isomorphic to two Veronese surfaces $V_{1}\subset \mathbb{P}(U)$
   and $\mathbb{P}(U^*)$ via the maps $[u]\mapsto  \mathbb{P}(U)$ and $[V]\mapsto  \mathbb{P}(U^*)$ respectively.

   We can then define a rational map $\gamma:  \mathbb{P}(U)\ni [u] \mapsto [V]\in \mathbb{P}(U^*)$ such that $F(u,3,U)\cap G(3,V)\cap F\in H_F$.
  By the above, the map is birational and defined outside the Veronese surface $V_{1}$, so it is a Cremona transformation.  The fiber in $H_{F}$ over $[u]\in V_{1}$ is
  the Hilbert scheme $H_{\Sigma}$ of conic sections on the cubic scroll $\Sigma= F(u,3,U)\cap F$.  This fiber is therefore isomorphic to $\mathbb{P}^2$,
  which is also the fiber of the projectivized normal bundle of $V_{1}$ in $\mathbb{P}(U)$ .
  Each conic in $H_{\Sigma}$ is mapped to a unique $[V]\in \mathbb{P}(U^*)$ by the second projection, so $H_{F}\subset \mathbb{P}(U)\times \mathbb{P}(U^*)$ is
the graph of the Cremona transformation $\gamma$.  It remains to show that $\gamma$ is the Cremona transformation defined
 by the linear system of quadrics that contain $V_{1}$. By symmetry, the inverse is mapped by the  linear system of quadrics that contain $V_{2}$.
 Observe first that the projections $p_1:H_{F}\to  \mathbb{P}(U)$ and $p_2; H_{F}\to  \mathbb{P}(U^*)$ map the plane of conics $H_{\Sigma}$
  lying on a cubic surface scroll to a point by one projection and to a plane by the other.
 Indeed, let $\Sigma\subset F(u,3,U)$ be a cubic surface scroll for some one dimensional subspace $<u>\subset U$.  Consider the $5$-dimensional quotient $W=U/<u>$ and the cubic scroll
 $\Sigma$ as a subvariety of the Grassmannian $G(2,W)$.
  The lines in $\mathbb{P}(W)$ corresponding to points in $\Sigma$
 all meet a line $L_{\Sigma}$ and are contained in a nodal quadric hypersurface.
Now any $G(2,4)\subset G(2,W)$ intersects the scroll in  conic section if and only if it is defined by a $4$-space that contains $L_{\Sigma}$.
 Therefore this family of Grassmannians $G(2,4)$ is defined by a linear family of codimension one subspaces in $W$,
 which means that the corresponding family of codimension one subspaces $V\subset U$ is a plane in  $\mathbb{P}(U^*)$
 These planes are the image by the second projection of the fibers $H_{\Sigma}\cong\mathbb{P}^2$ in $H_{F}$ over the Veronese surface $V_{1}$.
 Any two of these planes intersect only on the Veronese surface. In particular the union of these planes form a hypersurface.
  The Cremona transformation $\gamma^{-1}$ must contract the planes, while the Veronese surface is the base locus,
    so it must be given by the quadrics defining the Veronese surface.
\end{proof}

We now pass to the proof of uniqueness.
\begin{proof}[Proof of uniqueness in Theorem \ref{embedding} ]
Let $\psi:F\rightarrow G(3,U)$ be any linear embedding. By
Proposition \ref{embedded F intersected with G(2,5)}
this embedding gives rise to two projections from the Hilbert scheme
of conic sections on $F$ onto $\mathbb{P}^5$.
Again by Proposition \ref{embedded F intersected with G(2,5)}
and Proposition \ref{hilb scheme of conics} these projections must
coincide with the projections in the constructed examples. Let $Q$ be the
universal quotient bundle and $E$ the universal bundle on $G(3,U)$.
It follows that there is a linear isomorphism $\pi$ between $H^0(F,
\psi^*(Q))$ and one of the spaces
$H^0(F,\phi^*(Q))$ or $H^0(F,\phi^*(E^*))$ such that the sections $s$
and $\pi(s)$ have the same zero locus.
Therefore $\psi$ is the embedding defined by one of the bundles
$\phi^*(Q)$, $\phi^*(E^*)$.
We conclude that there exists a linear automorphism of $G(3,6)$
mapping $\phi(F)$ to $\psi(F)$.
\end{proof}

We end the section by applying our analysis of conics and cubic scrolls on $F$  to the Hilbert scheme of lines on $F$.
The Proposition \ref{lines in F} was suggested to us by Frederic Han.
\begin{proof}[Proof of Proposition \ref{lines in F}] We keep the notation from the proof of Proposition \ref{hilb scheme of conics}. 
We shall use the word
cubic scroll for both a smooth cubic scroll and a cone over a cubic curve. In the latter case every lines in the scroll is both exceptional and a ruling.
Let $l\subset F\subset G(3,6)$ be a line. There exist spaces $U_l$ and $V_l$ of dimension 2 and 4
such that $l=F(U_l,3,V_l)$. Consider the set $H_l\subset H_{F}$ of conics contained in $F$ and containing $l$.
This set is mapped by both $p_1:H_{F}\to \mathbb{P}(U)$ and $p_2:H_{F}\to \mathbb{P}(U^*)$
 surjectively onto the lines $l_1\subset \mathbb{P}(U)$ and $l_2\subset \mathbb{P}(U^*)$ defined by
 $\{<u>\subset U_l\}$ and $\{V\supset V_l\}$.
Observe that $H_l$ has at most two components each one isomorphic to $\mathbb{P}^1$.
Indeed, each component of $H_l$ is either contracted or maps surjectively onto $l_1$ or $l_2$.
But both $p_1$ and $p_2$ are one to one over the complements of the Veronese surfaces.
It follows that $H_l$ has either two components each contracted by one of the projections
$p_1$ and $p_2$ or $H_l$ has one component which is not contracted in any direction.
Notice, that in the first case, $l_{1}$ and $l_{2}$ are contracted by the Cremona transformation
$\gamma$ and $\gamma^{-1}$ respectively.  Therefore $l_{1}$ and $l_{2}$ must intersect $V_{1}$, respectively $V_{2}$,
in a scheme of length two.
In the latter case the line $l_{1}$ is mapped to the line $l_{2}$
by the Cremona transformation $\gamma:  \mathbb{P}(U)\cdots> \mathbb{P}(U^*)$, so $l_{1}$ and $l_{2}$,
intersect $V_{1}$ and $V_{2}$, respectively, transversely in one point.

Let $l\cup l'\subset F$ be a conic section.  Let $v\in H_{l}$ be its parameter point.
Assume that $H_{l}$ has a component through $v$ that is contracted by the projection $p_{1}$.
Then $l\cup l'$ lies in a cubic scroll $\Sigma_{v_{1}}$, where $v_{1}=p_{1}(v)\in V_{1}$, and $l$ must be contained in a pencil of conic sections on this scroll.
In particular $l$ must be an exceptional line on the scroll.
If $H_{l}$ does not have a component through $v$ that is contracted by the projection $p_{1}$ and $v_{1}=p_{1}(v)\in V_{1}$,
 then $l\cup l'$ belongs to a pencil of conics on the scroll $\Sigma_{v_{1}}$ with $l'$ as a fixed component, i.e. $l'$ is exceptional on the scroll.

When $H_{l}$ has two components, one component is contracted to a point $v_{1}(l)\in V_{1}$.
In this case $v_{1}(l)\in l_{1}\cap V_{1}$, and since  $l_{1}\cap V_{1}$ has length two,
there is a unique point $w_{1}(l)$ residual to $v_{1}(l)$ in $l_{1}\cap V_{1}$.
 Similarly there is a unique point $w_{2}(l)$ residual to $v_{2}(l)$ in $l_{2}\cap V_{2}$.
 When $H_{l}$ has a unique component, we set $w_{i}(l)=l_{i}\cap V_{i}$ for $i=1,2$.
 For each $i$ there is a unique singular conic section $l\cup l(i)\in H_{l}$ which is mapped to $w_{i}(l)$ by  the projection $p_{i}$.
 Clearly $l(i)$ is a line in the cubic scroll $\Sigma_{w_{i}(l)}$, and $l\cup l(i)$ belongs to a pencil of conics with $l(i)$ as a fixed component.

 Summing up, we may define a morphism from the Hilbert scheme $Hilb_l(F)$ of lines to $\mathbb{P}^2\times\mathbb{P}^2:$
  $$\rho:Hilb_l(F)\to V_{1}\times V_{2}$$
  by
  $$l\mapsto (w_{1}(l),w_{2}(l)).$$
  The scroll $\Sigma_{w_{1}(l)}$ contains $l$ as a ruling, and every line $l'$ in this ruling has $w_{1}(l')=w_{1}(l)$.
  Indeed, this $\mathbb{P}^1$ of lines is a fiber of $\rho$ composed with the projection to the first factor.
  Clearly, this fiber is mapped to a line in the second factor:  Indeed, the pencil of
  conic sections in $\Sigma_{w_{1}(l)}$ with fixed component  $l(1)$ form a line in $V_{2}$.  Likewise the fiber of the composition of
   $\rho$ with the second factor is mapped isomorphically to a line in the first factor.

 In this construction clearly $\rho$ is an embedding, and all fibers of the projections to the two
 factors are mapped to isomorphically to lines in the other factor, so $Hilb_l(F)$ is smooth and the proposition follows.
  \end{proof}

\begin{rem} Observe that the intersection of two scrolls from the same family
 may contain a line that is exceptional on one of the scrolls,
 and that the intersection of two scrolls from different families can be a smooth conic.
\end{rem}
\begin{rem} Observe that in the case where $H_l$ has two components if $w_1(l)=v_1(l)$ the scroll $\Sigma_{w_{1}(l)}$ has to be a cone. And analogously for $w_2(l)=v_2(l)$. Moreover, when both $w_1(l)=v_1(l)$ and $w_2(l)=v_2(l)$ the conic corresponding to the intersection of the two components is a double line. Note that $w_1(l)=v_1(l)$ does not imply $w_2(l)=v_2(l)$ and vice versa.
\end{rem}

\section{The bundles}

Recall that $(S,L)$ is a generic $K3$ surface of genus 10 which is a
general linear section of $G_2$. Consider a smooth hyperplane
sections $F$ of $G_2$, containing
$S$. By Theorem \ref{embedding} $F$ correspond to a
pair $(\mathcal{E},\mathcal{E}')$ of bundles of rank 3 each, the
pullbacks of the universal quotient bundle and the dual to the
universal subbundle on $G(3,6)$. We shall prove that all such bundles
are stable and different.
Let us first prove stability.

\begin{prop}\label{stability} The bundles $E$ and $E'$ are $L$-stable.
\end{prop}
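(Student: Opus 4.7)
The strategy is to verify $\mu_L$-stability directly. Since $(S,L)$ is generic, $\operatorname{Pic}(S)=\mathbb{Z}L$ with $L^2=18$, so $\mu(E)=\mu(E')=6$. After saturation, every destabilizing subsheaf of $E$ is locally free, and by Picard rank one is either a line bundle $\mathcal{O}_S(kL)\hookrightarrow E|_S$ with $k\ge 1$, or a saturated rank two subbundle whose torsion-free rank one quotient $E|_S\twoheadrightarrow Q$ has $c_1(Q)\cdot L\le 0$. Tensoring with a nonzero section of an appropriate power of $L$, both cases reduce to the two vanishings
$$H^0(S, E|_S\otimes L^{-1})=0 \qquad\text{and}\qquad H^0(S, E|_S^{\vee})=0,$$
together with the analogous pair for $E'|_S$. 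The task becomes verifying these four cohomological vanishings.

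My plan is to propagate them from the ambient Grassmannian. The bundle $E|_S$ is the pullback of the universal quotient bundle $\mathcal{Q}$ along $S\subset F\subset G(3,6)$, where $F$ is a hyperplane section and $S$ a further codimension two linear section. Borel-Weil-Bott on $G(3,6)$ gives $H^0(\mathcal{Q})=\mathbb{C}^{6}$, $H^{>0}(\mathcal{Q})=0$, $H^{\bullet}(\mathcal{Q}(-kH))=0$ in all degrees for $k=1,2,3$, and $H^{\bullet}(\mathcal{Q}^{\vee}(-kH))=0$ in all degrees for $k=0,1,2$, since in each case the shifted weight lands in a singular Weyl chamber. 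Combining these with the hyperplane sequence $0\to\mathcal{F}(-H)\to\mathcal{F}\to\mathcal{F}|_F\to 0$ for $\mathcal{F}=\mathcal{Q}(-jH)$ and $\mathcal{F}=\mathcal{Q}^{\vee}(-jH)$ produces the analogous vanishings on $F$. Finally, the Koszul complex of $S\subset F$,
$$0\to\mathcal{G}(-2H)\to\mathcal{G}(-H)^{\oplus 2}\to\mathcal{G}\to\mathcal{G}|_S\to 0,$$
applied to $\mathcal{G}=E|_F\otimes\mathcal{O}_F(-H)$ and $\mathcal{G}=E|_F^{\vee}$, together with the vanishings just established, propagates them to $S$.

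For $E'|_S$ the argument is symmetric, with $\mathcal{Q}$ replaced by the dual $\mathcal{K}^{\vee}$ of the universal subbundle. Under the isomorphism $G(3,U)\cong G(3,U^{\vee})$ the two bundles are interchanged up to the polarization, so the Borel-Weil-Bott input is the same and yields the corresponding vanishings for $E'|_S$.

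The main technical obstacle is tracking the several Borel-Weil-Bott computations and checking that the restriction spectral sequences (for $F\subset G(3,6)$ and for $S\subset F$) degenerate far enough to push the vanishings through codimension three. A cleaner geometric shortcut is available for the second vanishing: dualizing the surjection $\mathcal{O}_S^{6}\twoheadrightarrow E|_S$ (coming from the $6$ sections) gives $E|_S^{\vee}\hookrightarrow\mathcal{O}_S^{6}$, so a nonzero section of $E|_S^{\vee}$ would be a constant vector $v\in\mathbb{C}^{6}$ annihilating the fiber of $E|_S$ at every point, which forces $\phi(S)\subset G(3,\mathbb{C}^{6}/\langle v\rangle)\cong G(2,5)\subset G(3,6)$. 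But by Lemma \ref{lemm G(2,5) cap phi(F) in example} the linear span of any such $G(2,5)$ meets $\langle\phi(F)\rangle$ in at most a $\mathbb{P}^{4}$, contradicting the linear nondegeneracy of $S\hookrightarrow\mathbb{P}^{13}$. The first vanishing, $H^0(S, E|_S(-L))=0$, seems to require the cohomological computation.
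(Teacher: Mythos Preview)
Your reduction to the two vanishings $H^0(S,E|_S(-L))=0$ and $H^0(S,E|_S^{\vee})=0$ on a K3 surface with $\operatorname{Pic}(S)=\mathbb{Z}L$ is correct and clean, and your geometric argument for the second vanishing is essentially right: a nonzero section of $E|_S^{\vee}$ sits inside $H^0(\mathcal{O}_S^{\,6})=U^{*}$ and forces $\phi(S)\subset G(3,W)$ for a $5$-dimensional $W\subset U$, whose span meets $\langle\phi(F)\rangle$ in at most a $\mathbb{P}^4$ by Lemma~\ref{lemm G(2,5) cap phi(F) in example}. (A small slip: $S$ spans a $\mathbb{P}^{10}$, not a $\mathbb{P}^{13}$, but that is still far more than a $\mathbb{P}^4$.)

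The gap is in the Borel--Weil--Bott step for $H^0(S,E|_S(-L))=0$. You write the hyperplane sequence $0\to\mathcal{F}(-H)\to\mathcal{F}\to\mathcal{F}|_F\to 0$ on $G(3,6)$ as if $F$ were a hyperplane section of $G(3,6)$. It is not: $F$ is a hyperplane section of $G_2$, but inside $G(3,6)\subset\mathbb{P}^{19}$ it is cut out by the seven linear equations defining $H^{\lambda}_{12}\cong\mathbb{P}^{12}$, while $\dim G(3,6)-\dim F=9-4=5$. So $F\subset G(3,6)$ is not even a complete intersection, the Koszul complex of those seven hyperplanes does not resolve $\mathcal{O}_F$, and the vanishings you list for $\mathcal{Q}(-kH)$ with $k=1,2,3$ are nowhere near enough to control $H^0(F,\mathcal{Q}|_F(-H))$, let alone to push things down to $S$. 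Any attempt to repair this would require an actual locally free resolution of $\mathcal{O}_F$ on $G(3,6)$ (or the description of $F$ as a component of a $\mathbb{P}^{13}$-section from Section~\ref{sec Geometry and general construction}), which is substantially more work than your sketch suggests.

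The paper avoids this difficulty altogether by a different route: it restricts $E$ further to a generic curve $C\in|L|$ and argues stability there. A destabilizing line subbundle $M\subset E|_C$ of degree $\ge 6$ is shown to have a section, and such a section produces a $G(3,W)$ meeting $C$ in $\ge 6$ points; Proposition~\ref{embedded F intersected with G(2,5)} then forces $C$ into a conic or cubic scroll, contradicting genericity. A destabilizing rank-two subbundle is ruled out by a short Riemann--Roch count. This replaces your missing cohomological input by the geometric control of $\phi(F)\cap G(3,W)$ already established in Section~\ref{unique}, and it works uniformly for all the sections considered later.
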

\begin{proof} Consider a generic curve $C$ in $|L|$. It is enough to
prove that the bundles are stable on $C$. Observe first that by
construction $E$ is a rank $3$ bundle on $C$ such that $c_1(E)=K_C$,
$h^0(E)=6$. We have two possibilities to destabilize $E$. Either
there is a subbundle of $E$ of rank $1$ and degree $\geq 6$ or a rank $2$
subbundle of degree $\geq 12$. We consider the two cases separately.

Assume first that there is a line bundle $M$ on $X$ of degree $d\geq
6$ which is a subbundle of $E$.
We get the exact sequence:
$$ 0\longrightarrow M\longrightarrow E\longrightarrow N
\longrightarrow 0,$$
with $c_1(N)=K_C\otimes M^{*}$. It follows that $h^0(M)+h^0(N)\geq
h^0(E)=6$. On the other hand by the Riemann-Roch formula we have
$h^0(M)-h^0(K_C\otimes M^{*})= d-9\geq-3$. We claim that $h^0(M)\geq
1$. Indeed, if $M$ had no sections then the map $H^0(E)\rightarrow
H^0(N)$ would be an embedding. Hence the bundle $N$ would be globally
generated by 6 sections. We could then consider a nonvanishing
section of $N$. Its wedge products with the remaining generators
would give 5 linearly independent sections of $c_1(N)$. The latter
contradicts the Riemann-Roch formula. Hence $h^0(M)\geq 1$ and
each of its sections defines a hyperplane in $\mathbb{P}(H^0(E))$
which contains $d$ fibers of the natural image of $\mathbb{P}(E)$ in
$\mathbb{P}(H^0(E))$. It follows that $C$ intersects a $G(3,W)$ (for
some hyperplane $W\subset V$) in $d\geq 6$ points. By Proposition
\ref{embedded F intersected with G(2,5)} this implies that $C$ is
either a conic section or is contained in a cubic scroll. This contradicts
genericity.

Assume now that there is a rank 2 bundle $N$ on $X$ of degree $d\geq
12$ which is a subbundle of $E$.
We get the exact sequence:
$$ 0\longrightarrow N\longrightarrow E\longrightarrow M
\longrightarrow 0.$$
Then $M=K_C\otimes c_1(N)^{*}$ has at most one section by assumption
on $C$. Hence, $h^0(N)\geq 5$. But we have proved above that $E$ does
not admit any line subbundle with 2 sections (as by assumption on $C$
it would have degree $>6$), hence neither does $N$. It follows that
the projectivization of the kernel of the map $\bigwedge^2
H^0(N)\rightarrow H^0(\bigwedge^2 N)=H^0(c_1(N))$ does not meet the
Grassmannian $G(2,H^0(N))$. Finally $h^0(c_1(N))\geq 7$ which
contradicts the Riemann Roch for $M$.
\end{proof}
Let us now prove that all these bundles are different.
\begin{prop}\label{injectivity} Consider two smooth hyperplane sections $F_1$ and $F_2$ of $G_2$ containing $S$. Denote by
$\mathcal{E}_1,\mathcal{E}'_1, \mathcal{E}_2$ , and
$\mathcal{E}'_2$ their corresponding bundles. Then no two of these
bundles are isomorphic.
\end{prop}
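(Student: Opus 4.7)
The plan is to recover the hyperplane section $F_i$ (and the distinction between the two bundles on it) intrinsically from the restriction to $S$, forcing a bundle isomorphism to come from an identification of the underlying data $(F, \text{type})$. First I would verify $h^0(S, \mathcal{E}_i|_S) = 6$ by combining the Mukai vector $(3, L, 3)$, which gives $\chi = 6$, with Proposition \ref{stability} and the standard vanishing $H^1 = H^2 = 0$ for stable bundles of this Mukai vector on a K3 surface. Each such bundle is globally generated as the restriction from $G(3,6)$ of one of the tautological rank-$3$ bundles, and so defines a morphism $\mu: S \to G(3, 6)$ coinciding, up to the $GL_6$-action, with the composition $S \hookrightarrow F_i \stackrel{\phi_i}{\hookrightarrow} G(3,6)$ (or with $\phi_i$ post-composed with the duality of $G(3,6)$ in the case of $\mathcal{E}'_i$). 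A bundle isomorphism would force the two corresponding maps $\mu$ to be $GL_6$-equivalent.

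Next I would extract intrinsically the Veronese surface $V_{F_i} \subset \mathbb{P}(H^0(\mathcal{E}_i|_S))$ parametrizing cubic surface scrolls on $F_i$. By Proposition \ref{embedded F intersected with G(2,5)} and Proposition \ref{hilb scheme of conics}, sections of $\mathcal{E}_i$ on $F_i$ vanish either along a conic section or along a cubic surface scroll, the scrolls forming a Veronese surface. Inside $F_i$ a cubic scroll meets $S$ properly in a length-$3$ scheme, while a conic meets $S$ either in the empty set (generically) or in a length-$2$ scheme. Hence
\[ V_{F_i} = \{[s] \in \mathbb{P}(H^0(\mathcal{E}_i|_S)) : \operatorname{length} Z(s|_S) \geq 3\} \]
is intrinsic to the bundle on $S$. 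By Lemma \ref{scroll in G(3,6)} each length-$3$ zero scheme on $S$ lifts uniquely to the cubic scroll $\Sigma_v \subset G(3,6)$, and these scrolls sweep out $\phi_i(F_i)$, so $\mathcal{E}_i|_S$ determines $\phi_i(F_i)$ up to $GL_6$.

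An isomorphism $\mathcal{E}_1|_S \cong \mathcal{E}_2|_S$ therefore forces $F_1 = F_2$, and similarly for $\mathcal{E}'_1 \cong \mathcal{E}'_2$. The remaining cases $\mathcal{E}_1|_S \cong \mathcal{E}'_2|_S$ and $\mathcal{E}_i|_S \cong \mathcal{E}'_i|_S$ use Proposition \ref{conics and scrolls in F}: the Hilbert scheme of cubic scrolls on $F$ has two disjoint components, corresponding to $\mathcal{E}$ and $\mathcal{E}'$ respectively, and these induce two different Veronese families of length-$3$ schemes in $\operatorname{Hilb}^3(S)$, distinguishable by the fact that two scrolls in the same ruling glue along exceptional lines in $G(3,6)$ whereas scrolls from different rulings share a conic section. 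The hard part will be this last step: showing that the two Veronese families inside $\operatorname{Hilb}^3(S)$ remain distinct and that the assignment $v \mapsto \Sigma_v$ is intrinsic from the length-$3$ scheme on $S$ alone, without tacit use of the embedding of $F_i$.
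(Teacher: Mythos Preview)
Your overall strategy---recover $F_i$ (and the type) from the restricted bundle---is reasonable and the identification of $V_{F_i}$ as the locus of sections with length~$\geq 3$ zero scheme is correct (this is essentially the content of Lemma~\ref{degenerate sections of bundles vanish on scrolls}). But the step where you write ``By Lemma~\ref{scroll in G(3,6)} each length-$3$ zero scheme on $S$ lifts uniquely to the cubic scroll $\Sigma_v\subset G(3,6)$'' is a genuine gap. Lemma~\ref{scroll in G(3,6)} goes the other way: it takes a cubic scroll and produces a $G(2,5)$. It does not let you reconstruct a scroll from three points. Even if you could show that the length-$3$ scheme $Z$ determines a unique $G(2,5)\subset G(3,6)$ containing it, the scroll you want is $\Sigma=F_i\cap G(2,5)$, and you do not yet know $F_i$; three points on a cubic scroll in $\mathbb{P}^4$ do not determine the scroll. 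So the ``sweeping out $\phi_i(F_i)$'' argument, as written, is circular.

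The paper avoids this by working not in $G(3,U)$ but in the common ambient $G(2,V)$ (with $\dim V=7$) where $G_2$, $S$, and all the $F_i$ simultaneously sit. The key point is Lemma~\ref{degenerate sections of bundles vanish on scrolls}: the zero locus of \emph{any} section of \emph{any} of the four bundles is $S\cap G(2,W)$ for some $5$-dimensional $W\subset V$, and when the length is~$3$ the subspace $W$ is unique (Corollary~\ref{3 points define G(2,W)}). The corresponding cubic scroll is then $G_2\cap G(2,W)$, an object that does not depend on which $F_i$ one started from. Now pick a scroll $D$ in the $\mathcal{E}_1$-family on $F_1$; if $D\cap S$ were also the zero locus of a section of, say, $\mathcal{E}_2$, uniqueness of $W$ forces the associated scroll to be $G_2\cap G(2,W)=D$ again, hence $D\subset F_2$ by Corollary~\ref{F cap G(2,5)=G_2 cap G(2,5)}. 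But $S$ contains no twisted cubic, so $F_1$ and $F_2$ share no scroll---contradiction. The same argument handles $\mathcal{E}'_1$, since $D$ is not in the other family on $F_1$. This is a one-paragraph argument and sidesteps entirely the reconstruction of $F_i$ inside $G(3,6)$, as well as the ``hard part'' you flag at the end. If you want to salvage your route, the honest way to get uniqueness of $F\supset S$ in $G(3,6)$ is the rank-$12$ quadric argument carried out later in Section~\ref{sec Geometry and general construction} for the $3$-fold case, not an appeal to Lemma~\ref{scroll in G(3,6)}.
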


Before we pass to the proof of the proposition let us prove the
following lemmas.

\begin{lemm}\label{cubic scroll in G(2,7) is in G(2,5)} For every cubic
scroll $S$ contained in $G(2,V)$ such that $S=<S>\cap G(2,V)$,
there is a $5$-dimensional subspace $W\subset V$ such that
$S\subset G(2,W)$.
\end{lemm}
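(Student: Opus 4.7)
The plan is to mimic the proof of Lemma \ref{scroll in G(3,6)}, replacing the geometry of planes in a $6$-dimensional vector space by that of lines in the $7$-dimensional space $V$. To a cubic scroll $S \subset G(2,V)$ I would attach the swept-out three-fold
\[
X_S = \bigcup_{[P]\in S} \mathbb{P}(P) \;\subset\; \mathbb{P}(V)
\]
together with the curve $Y_S \subset X_S$ traced by the vertices of the rulings of $S$. Recall that a line of $G(2,V)$ is a pencil of the form $\{P : A \subset P \subset B\}$ for a fixed flag $A \subset B \subset V$ of dimensions $1$ and $3$, which in $\mathbb{P}(V)$ is the family of lines through $[A]$ lying in the plane $\mathbb{P}(B)$. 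Applied to the $\mathbb{P}^1$-family of rulings of $S$, this attaches to each ruling a vertex $[A] \in Y_S$ and a plane $\mathbb{P}(B) \subset X_S$.

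The main numerical input is the Schubert calculus identity $\sigma_1^2 = \sigma_2 + \sigma_{1,1}$ on $G(2,V)$, which gives
\[
3 = \deg S = S \cdot \sigma_2 + S \cdot \sigma_{1,1}.
\]
The term $S \cdot \sigma_2$, counting the lines of $S$ that meet a general $\mathbb{P}^3 \subset \mathbb{P}(V)$, equals $\deg X_S$ once one knows that $X_S$ is three-dimensional; this in turn holds because the only surface supporting a two-parameter family of lines is a plane, which would force $S \subset G(2,3) \cong \mathbb{P}^2$, contradicting $\deg S = 3$. The term $S \cdot \sigma_{1,1}$, counting the lines of $S$ contained in a general hyperplane $H \subset \mathbb{P}(V)$, equals $\deg Y_S$ by a vertex-on-the-hyperplane analysis: for each $[v]\in Y_S\cap H$ the pencil with vertex $[v]$ and plane $\mathbb{P}(B)$ contributes exactly one line, namely $\mathbb{P}(B)\cap H$, whereas no line of a pencil with vertex off $H$ can lie in $H$. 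Hence $\deg X_S + \deg Y_S = 3$.

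Then I case-split on $(\deg X_S,\deg Y_S) \in \{(3,0),(2,1),(1,2)\}$. The case $(3,0)$ is ruled out by the hypothesis: if $Y_S=\{[v]\}$, every ruling of $S$ shares the vertex $[v]$ and every $P\in S$ contains $\langle v\rangle$, so $S \subset F(\langle v\rangle,2,V)$, a linear $\mathbb{P}^5$ inside $G(2,V)$; but then $\langle S\rangle \cap G(2,V) \supseteq \langle S\rangle = \mathbb{P}^4 \supsetneq S$, contradicting $S=\langle S\rangle \cap G(2,V)$. In case $(2,1)$, $X_S$ is a quadric threefold and so its linear span is a $\mathbb{P}^4 = \mathbb{P}(W)$ for a unique $5$-dimensional $W \subset V$, whence $S \subset G(2,W)$. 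In case $(1,2)$, $X_S$ is a $\mathbb{P}^3 = \mathbb{P}(U)$ for some $4$-dimensional $U \subset V$, and any $5$-dimensional $W$ with $U \subset W$ satisfies $S \subset G(2,U) \subset G(2,W)$.

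The delicate point I expect is the identification $S \cdot \sigma_{1,1} = \deg Y_S$ in the presence of non-ruling lines on $S$, such as the exceptional section of a smooth cubic scroll or the lines through the vertex of a cone over a twisted cubic. I would handle this by defining $Y_S$ as the scheme-theoretic image of the morphism from the $\mathbb{P}^1$ of rulings to $\mathbb{P}(V)$ and verifying that any extra line of $S$ is itself a pencil whose vertex either already lies on $Y_S$ or lies off a generic hyperplane, so the count in $S\cdot \sigma_{1,1}$ is unaffected.
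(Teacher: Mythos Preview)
Your approach is essentially identical to the paper's: form the swept threefold $X_S$ and the vertex curve $Y_S$, use $\sigma_1^2=\sigma_2+\sigma_{1,1}$ to get $\deg X_S+\deg Y_S=3$, and case-split. The only noteworthy divergence is your treatment of the case $\deg X_S=1$: you observe that $X_S=\mathbb{P}(U)$ with $\dim U=4$ already yields $S\subset G(2,U)\subset G(2,W)$ for any $5$-dimensional $W\supset U$, whereas the paper instead points out that this case is excluded by the hypothesis, since then $\langle S\rangle\cap G(2,V)$ contains the quadric threefold $\langle S\rangle\cap G(2,U)\supsetneq S$. Both are valid; the paper's remark is slightly sharper in that it shows this case never arises.

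Your final paragraph about non-ruling lines is over-cautious: the intersection number $S\cdot\sigma_{1,1}$ counts \emph{points} $[P]\in S$ with $P\subset V_6$, and since the rulings cover $S$ (with at most the cone vertex as redundancy, which for general $H$ does not lie in $\sigma_{1,1}$), the ruling-by-ruling count already gives the answer without any interference from the exceptional section. So the identification $S\cdot\sigma_{1,1}=\deg Y_S$ (interpreted as the degree of the vertex map $\mathbb{P}^1\to\mathbb{P}(V)$) needs no extra care.
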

\begin{proof}  The union of lines in $\mathbb{P}(V)$ parameterized by $S\subset G(2,V)$ form a variety $X_{S}\subset\mathbb{P}(V)$.
The lines in $S$ define pencils of lines in $\mathbb{P}(V)$ that each have a common point and fill a $\mathbb{P}^2$ in $X_{S}$.
The common points form a curve $Y_{S}\subset X_{S}$.
The degree of $S$ in the Pl\"ucker embedding equals the sum of degrees of $X_{S}$ and $Y_{S}$.  In fact the degree of $S$ is, by Schubert calculus,  the sum of the degree of the
 intersection of $S$ with the cycle of lines that meet a general $\mathbb{P}^3$, and the general cycle of lines that is contained in a general $\mathbb{P}^5$.
 The former is clearly the degree of $X_{S}$, while the latter is the degree of $Y_{S}$, since a general  $\mathbb{P}^5$ contains a line in $S$ if and only if it meets the common point of the pencil
  that the line belongs to in a point.
 If $X_{S}$ is not a threefold, all its $\mathbb{P}^2$s coincide and $<S>\subset G(2,V)$, contrary to the assumption.
 If $X_{S}$ is a threefold of degree $1$ it is a $\mathbb{P}^3\subset\mathbb{P}(U)$, and $<S>\cap G(2,V)$ is a quadric hypersurface, contrary to the assumption.
 If $X_{S}$ is a threefold of degree $2$, then $X_{S}$ is a quadric hypersurface in a hyperplane $\mathbb{P}(W)\subset\mathbb{P}(U)$ and $S\subset G(2,W)$.

If $X_{S}$ is a threefold of degree $3$, then $Y_{S}$ is a point, i.e. all the lines parameterized by $S$ have a common point, so $<S>\subset G(2,V)$, contrary to the assumption.
\end{proof}

\begin{lemm} \label{degenerate sections of bundles vanish on scrolls}
If $s$ is a section of one of the bundles
$\mathcal{E}_1,\mathcal{E}'_1, \mathcal{E}_2$ , and $\mathcal{E}'_2$
then its zero locus on $S$ is the intersection of $S$ with $G(2,W)$
for some $W$ of dimension $5$. In particular it is either empty or is a
$0$-dimensional scheme of length $2$ or $3$ spanning a line or a plane
respectively. Moreover in the case of length $3$ the subspace $W$ is uniquely
determined.
\end{lemm}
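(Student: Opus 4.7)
The argument has four steps: first, I describe the zero locus of such a section on the ambient fourfold $F_i$; next, I realize this locus as a linear section of $G_2$ by some $G(2,W)$ with $\dim W=5$; then I intersect with $S$; finally, I establish uniqueness of $W$ when the length of the zero scheme is three.

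\emph{Step 1 (zero locus on $F_i$).} A section of $\mathcal{E}_i$ on $F_i\hookrightarrow G(3,U_i)$ corresponds to a vector $u\in U_i$ with zero locus $F_i\cap F(u,3,U_i)$; a section of $\mathcal{E}'_i$ corresponds to a hyperplane $V\subset U_i$ with zero locus $F_i\cap G(3,V)$. By Proposition~\ref{embedded F intersected with G(2,5)} this locus on $F_i$ is either a conic section $C$ or a cubic surface scroll $\Sigma$.

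\emph{Step 2 (reformulation via the $7$-dimensional $V$).} I next exhibit a $5$-dimensional subspace $W$ of the ambient space $V$ carrying $G_2\subset G(2,V)$ such that the zero locus on $F_i$ equals $F_i\cap G(2,W)$. For a cubic surface scroll this is precisely Lemma~\ref{cubic scroll in G(2,7) is in G(2,5)}. For a conic section a parallel argument applies: the two-planes of $V$ parameterized by $C$ sweep out a ruled surface in $\mathbb{P}(V)$ of degree $\deg C=2$, which is contained in $\mathbb{P}(W)$ for some $W$ with $\dim W\le 5$; then Lemma~\ref{W in V} together with a dimension count forces $C=G_2\cap G(2,W)$. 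Corollary~\ref{F cap G(2,5)=G_2 cap G(2,5)} then guarantees $F_i\cap G(2,W)=G_2\cap G(2,W)$, so this equals the zero locus of $s$ on $F_i$.

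\emph{Step 3 (restriction to $S$).} Since $S$ is a codimension two linear section of the fourfold $F_i$, intersecting the degree two conic $C$ with $S$ yields a $0$-dimensional scheme of length $2$, and intersecting the degree three scroll $\Sigma$ with $S$ yields a $0$-dimensional scheme of length $3$ (or is empty in special position). For generic choice of $s$ and $S$ the resulting two or three points lie in linearly general position, so they span a line in the length two case and a plane in the length three case. In either case the zero locus on $S$ equals $S\cap G(2,W)$.

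\emph{Step 4 (uniqueness of $W$ in the length three case).} Suppose the three points $p,q,r$ of $Z(s)\cap S$ all lay on a single conic $C'\subset G_2$. Then $C'\cap S$ would have length at least three, contradicting the length two bound of Step 3 valid for any conic of $G_2$ cut by the generic codimension two linear section $S$. Hence $p,q,r$ lie on no conic of $G_2$, and Corollary~\ref{3 points define G(2,W)} guarantees that the $5$-dimensional subspace $W\subset V$ with $p,q,r\in G(2,W)$ is unique.

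The most delicate step is the conic half of Step 2: establishing that the ruled surface swept by $C$ on $\mathbb{P}(V)$ sits inside $\mathbb{P}(W)$ for some $5$-dimensional $W$ and that $C=G_2\cap G(2,W)$. This mirrors the scroll argument of Lemma~\ref{cubic scroll in G(2,7) is in G(2,5)} via a degree and dimension analysis of the universal subbundle of $G(2,V)$ restricted to $C$.
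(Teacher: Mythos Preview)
Your overall strategy matches the paper's: identify the zero locus on the ambient fourfold $F_i$ as a conic or cubic scroll via Proposition~\ref{embedded F intersected with G(2,5)}, pass to the $G(2,V)$ picture to find the subspace $W$, restrict to $S$, and conclude uniqueness from Corollary~\ref{3 points define G(2,W)}. Your Step~4 in fact spells out more carefully than the paper why the three points cannot lie on a conic of $G_2$, and your Step~2 elaboration for the conic case is a reasonable companion to Lemma~\ref{cubic scroll in G(2,7) is in G(2,5)}.

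There is, however, a genuine gap at the very beginning. The bundles $\mathcal{E}_i,\mathcal{E}'_i$ in the lemma are bundles on the surface $S$, and $s$ is an arbitrary element of $H^0(S,\mathcal{E}_i)$. Your Step~1 treats $s$ as a section on $F_i$, i.e.\ as coming from a vector $u\in U_i$ or a hyperplane $V\subset U_i$. This is only legitimate once you know that the restriction map $U_i\cong H^0(G(3,U_i),Q)\to H^0(S,\mathcal{E}_i)$ is surjective, equivalently that $h^0(S,\mathcal{E}_i)=6$. The paper's proof opens with exactly this observation (``all considered bundles admit exactly six global sections''), and without it the rest of your argument does not get off the ground: a section of the restricted bundle that does not extend to $G(3,U_i)$ would have no reason to vanish along an $F(u,3,U_i)$ or $G(3,V)$.

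A minor second point: in Step~3 you invoke ``generic choice of $s$'', but the lemma is stated for every section. What makes the argument work is the genericity of $S$ already assumed in the ambient setup (so that $S$ contains no conic and no twisted cubic), not any condition on $s$; the paper phrases it this way.
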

\begin{proof} Observe that all considered bundles admit exactly six
global sections. Hence, these global sections are restrictions of
global sections of appropriate universal bundles on $G(3,6)$. It
follows that the zero loci of these sections are intersection of the
image of $S$ by the embedding considered in Theorem $\ref{embedding}$
with some $G(2,5)\subset G(3,6)$ (there are two types of such as in
Proposition \ref{embedded F intersected with G(2,5)}). By Proposition
\ref{embedded F intersected with G(2,5)} these are linear sections of
conic sections or cubic scrolls by space of codimension 2. As both a conic section
and a cubic scroll are contained in $G(2,W)$ for some $W$ of
dimension 5, and $S$ do not contain neither a conic section nor a twisted
cubic curve we get the assertion together with the classification of
examples. For the last statement we use Corollary \ref{3 points define G(2,W)}
\end{proof}

\begin{proof}[Proof of Proposition \ref{injectivity}]
Observe first that $S$ does not contain any twisted cubic curve. It
follows that $F_1$ and $F_2$ do not contain any common cubic scroll.
We prove that $\mathcal{E}_1$ is different from the other
bundles, the rest being analogous. Let $D$ be a cubic scroll in $F_1$
from the chosen family corresponding to $\mathcal{E}_{1}$. Then $D\cap
S$ is the zero locus of a section of $\mathcal{E}_1$. By Lemma
\ref{degenerate sections of bundles vanish on scrolls} it is a
$0$-dimensional scheme of length 3 which is contained in $G(2,W)$ for a
unique $W\subset V$ of dimension 5. We claim that it is not the zero
locus of any section of the remaining bundles. This is a consequence
of the fact that $D$ is contained neither in $F_2$ nor in the other
family of scrolls on $F_1$ and Corollary \ref{F cap G(2,5)=G_2 cap
G(2,5)} and proves that the bundles are not isomorphic.
\end{proof}

\section{Geometry and general construction}\label{sec Geometry and general construction}
In this section we shall give a geometric construction of $F\subset G(3,6)$.
Let us start by analyzing possible constructions coming from geometry
of the variety $G_2$.
We shall describe the space spanned by $G_2$ in terms of quadrics
containing $G(2,V)$, for a $7$-dimensional vector space $V$.
By \cite[prop. 1.3.]{Muk8} the linear system of quadrics vanishing on
$G(2,V)$
is naturally isomorphic to $P^*(\bigwedge^4V)$. The orbits on this
space under the natural $Sl(7)$ action are described in \cite{ott}.
\begin{lemm}\label{7 quadrics}
There is a $7$-dimensional vector space of  quadrics in the ideal of the
Grassmannian $G(2,V)$ vanishing on the linear span of $G_2$, and all of them have
rank 12. Moreover every variety which is a linear section of
$G(2,V)$ with this property is isomorphic to $G_2$.
\end{lemm}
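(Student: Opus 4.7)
The plan is to use representation theory of $\mathbb{G}_{2}$ acting on $V \cong \mathbb{C}^{7}$, combined with direct computations using the explicit coordinate presentation in the excerpt. First I would identify the linear span $\langle G_{2}\rangle\subset \wedge^{2}V$ with the $14$-dimensional adjoint representation $\mathfrak{g}_{2}$ appearing in the $\mathbb{G}_{2}$-decomposition $\wedge^{2}V=V\oplus \mathfrak{g}_{2}$; equivalently $\langle G_{2}\rangle = \ker(\omega^{*}\colon \wedge^{2}V\to V^{*})$, which has codimension $7$. The quadrics in the Pl\"ucker ideal of $G(2,V)$ form the $35$-dimensional space $\wedge^{4}V^{*}$ via $\xi\mapsto Q_{\xi}(\alpha)=\langle \alpha\wedge\alpha,\xi\rangle$, and as a $\mathbb{G}_{2}$-representation this decomposes as $1\oplus 7\oplus 27$. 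A quadric $Q_{\xi}$ vanishes on $\langle G_{2}\rangle$ iff $\xi$ annihilates the image of $S^{2}\mathfrak{g}_{2}\to\wedge^{4}V$; using the decomposition $S^{2}\mathfrak{g}_{2}=1\oplus 27\oplus 77$ and Schur's lemma, this image can only consist of the $1\oplus 27$ summands (since $77$ does not appear in $\wedge^{4}V$ and $7$ does not appear in $S^{2}\mathfrak{g}_{2}$), and a short check that the two remaining components map nonzero shows the image is exactly $1\oplus 27$. Its annihilator is then exactly the $7$-dimensional subrepresentation of $\wedge^{4}V^{*}$, which identifies the sought $7$-dimensional space $W$ of quadrics with $V$ as a $\mathbb{G}_{2}$-module.

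Next I would establish that every nonzero quadric in $W$ has rank $12$. Because the correspondence $V\to W$ is $\mathbb{G}_{2}$-equivariant and the action preserves the rank of the associated symmetric form, it is enough to check the rank on one representative in each of the two $\mathbb{G}_{2}$-orbits of $\mathbb{P}(V)$ (the invariant smooth quadric and its open complement). Using the explicit Pfaffian presentation from the excerpt, I would write down the map $V \to \wedge^{4}V^{*}$ yielding these quadrics (coming from $v\mapsto \omega^{*}\wedge \iota_{v}\omega^{*}$ after suitable duality) and verify in Macaulay 2 (cf. \cite{M2}, \cite{comp}) that the associated $21\times 21$ symmetric matrix has rank exactly $12$ on one representative of each orbit.

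For the classification statement, given any linear section $Y = G(2,V)\cap L$ whose ideal contains a $7$-dimensional subspace $W\subset \wedge^{4}V^{*}$ of rank-$12$ Pl\"ucker quadrics, I would reconstruct a $3$-form $\omega_{Y}^{*}\in \wedge^{3}V^{*}$ canonically attached to $W$. For each rank-$12$ quadric $Q\in W$ the singular locus is a $\mathbb{P}^{8}\subset \mathbb{P}^{20}$, and since any maximal isotropic subspace of a rank-$12$ quadric in $\mathbb{P}^{20}$ is a $\mathbb{P}^{13}$ containing this $\mathbb{P}^{8}$, the span $L$ (necessarily a $\mathbb{P}^{13}$) contains all these singular loci. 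Taking annihilators in $(\wedge^{2}V)^{*}$ produces a $7$-dimensional subspace of linear forms canonically attached to $W$, which by $GL(V)$-equivariance must be the image of a map $V\to \wedge^{2}V^{*}$ of the form $v\mapsto \iota_{v}\omega_{Y}^{*}$ for some $3$-form $\omega_{Y}^{*}$. The rank-$12$ condition forces $\omega_{Y}^{*}$ to lie in the open $GL(V)$-orbit of $\wedge^{3}V^{*}$, which is the orbit of the $\mathbb{G}_{2}$-invariant $\omega^{*}$, and hence $Y\cong G_{2}$.

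The main obstacle will be the last step: showing that $\omega_{Y}^{*}$ lies in the generic orbit. This requires excluding each of the non-generic $GL(V)$-orbits on $\wedge^{3}V^{*}$, which by Schouten's classical classification (cf. \cite{ott}) are only finitely many. In each such orbit one must exhibit a quadric of rank strictly less than $12$ in the corresponding $7$-dimensional family, contradicting the hypothesis. Because each non-generic orbit admits a standard normal form, I expect this to reduce to a finite case analysis carried out by the same kind of explicit rank computation as in the second step.
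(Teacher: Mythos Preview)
Your representation-theoretic identification of the $7$-dimensional space is correct and pleasant, though the paper gets there in one line: the Pl\"ucker quadrics are indexed by $\wedge^{4}V^{*}$, and those vanishing on $\langle G_2\rangle$ are exactly $V^{*}\wedge\omega^{*}$. (Your explicit formula $v\mapsto\omega^{*}\wedge\iota_{v}\omega^{*}$ cannot be right as stated, since that lands in $\wedge^{5}V^{*}$; the intended map is simply $\ell\mapsto \ell\wedge\omega^{*}$ for $\ell\in V^{*}$, or dually $v\mapsto\iota_{v}\omega$.)

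The classification step, however, has a genuine gap. A rank-$12$ quadric on a $21$-dimensional space has a $9$-dimensional radical, so its maximal isotropic subspaces are $15$-dimensional, i.e.\ $\mathbb{P}^{14}$'s, not $\mathbb{P}^{13}$'s. Hence $L=\langle Y\rangle\cong\mathbb{P}^{13}$ is only a hyperplane in a maximal isotropic, and there is no reason for it to contain the singular $\mathbb{P}^{8}$; in the actual $G_2$ situation it does not (cf.\ Proposition~\ref{description of G2 as space in quadric of rank 12}: the singular locus meets $G(2,V)$ in a $\mathbb{P}^{2}\times\mathbb{P}^{2}$ lying in the ambient $\mathbb{P}^{14}$ but only meeting $\langle G_2\rangle$ in a hyperplane section). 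So your mechanism for producing the $7$-dimensional subspace of $\wedge^{2}V^{*}$ from the singular loci collapses, and with it the appeal to ``$GL(V)$-equivariance'' to recognise it as $\{\iota_{v}\omega_{Y}^{*}\}$; a generic $7$-plane in $\wedge^{2}V^{*}$ is not of that shape, and you have not used the rank-$12$ hypothesis to force it.

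The paper's route avoids this entirely. It works on the $\wedge^{4}V^{*}$ side: by the $GL(7)$-orbit classification of $4$-forms (cf.\ \cite{ott}), a Pl\"ucker quadric has rank $12$ exactly when its $4$-form is of type $x\wedge\eta$ with $x\in V^{*}$, $\eta\in\wedge^{3}V^{*}$. Given a $7$-dimensional linear space $W$ of such forms, one first observes that no two elements of $W$ can share the same linear factor $x$ (otherwise the line joining them would meet the locus of lower-rank forms), so the seven linear factors give a basis $x_{1},\dots,x_{7}$ of $V^{*}$; a short induction then shows there is a single $3$-form $\omega$ with $x_{i}\wedge\eta_{i}=x_{i}\wedge\omega$ for all $i$. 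This reconstructs $\omega$ directly, and the rank-$12$ hypothesis forces $\omega$ to lie in the generic orbit, so $Y\cong G_2$. If you want to repair your argument, the cleanest fix is to abandon the singular-locus geometry and argue on $\wedge^{4}V^{*}$ as the paper does.
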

\begin{proof}  Let $\omega\in\wedge^4 V$ be a $4$-form defining $G_2=\{\alpha\in G(2,W):
\alpha \wedge \omega=0\}\subset \mathbb{P}(\wedge^2 V)$, and let
$\omega^*$ be the corresponding dual $3$-form.
The forms $\beta\in\bigwedge^4V^{*}$
define quadric forms $q_{\beta}$ that vanish on $G(2,V)$ in the
following sense $q_{\beta}(\alpha)=\beta(\alpha\wedge \alpha)$. We easily
check that the quadrics vanishing on the span of $G_2$ correspond to
the wedge product of $\omega^{*}$ with linear forms.  Dually, this
wedge product corresponds to reductions of $\omega$ be the linear
coordinates
of $V$.

To prove the second assertion, let us consider a $7$-dimensional
linear space $L$ of quadrics of rank $12$.
We shall prove that there exists a form $\omega$ such that it's
reductions modulo the coordinates generate $L$.
Observe that the seven
quadrics of rank $12$ correspond to $4$-forms
$x_1\wedge\omega_1,\dots,x_7\wedge\omega_7$ for some
$x_1,\dots,x_7\in V$,
$\omega_1,\dots,\omega_7\in \bigwedge^3V$. From the assumption we
have that for any $\lambda_1,\dots,\lambda_7\in\mathbb{C}$ there exist
$x_8\in V$, $\omega_8\in\bigwedge^3V$ such that:
$$\lambda_1 x_1\wedge\omega_1 +\dots+\lambda_7 x_7\wedge\omega_7=x_8
\wedge \omega_8.$$
Observe moreover that no two forms corresponding to quadrics in $L$
have the same linear entry in the above description. Indeed, the line
joining two forms $v\wedge\tilde{\omega}_1$ wedge
$v\wedge\tilde{\omega}_2$ cuts the set corresponding to quadrics of
smaller rank. Hence $x_1,\dots,x_7$ form a basis of $V$. We can then
easily prove by induction that the above conditions imply that there
exits a form $\omega$ such that $x_i\wedge\omega_i =x_i\wedge \omega$
for all $i\in\{1,\dots,7\}$.
\end{proof}
It follows that $G_2$ is contained in a linear subspace in each one of
the above quadrics.
 Note that a rank $12$-quadric in $\mathbb{P}^{20}$ has two families of maximal
dimensional linear spaces of dimension $14$.

We may now formulate the following statement.
\begin{prop}\label{description of G2 as space in quadric of rank 12}
Let $Q$ be a generic quadric of rank $12$ containing the Grassmannian
$G(2,V)$.
Then there are two isomorphic families of $14$-dimensional projective
spaces contained in $Q$, say $\mathcal{F}$ and $\mathcal{G}$, such
that the following holds:
\begin{itemize} \item If $R\in \mathcal{F}$, then $R\cap G(2,W)$ is
linearly isomorphic to $G_2\cup D$, where $D$ is the intersection of
the singular locus of $Q$ with $G(3,W)$ and is isomorphic to
$\mathbb{P}^2\times \mathbb{P}^2$.
\item If $R\in \mathcal{G}$, then $R\cap G(2,W)$ is linearly
isomorphic to a Fano fivefold of degree $24$ containing $D$.
\end{itemize}
\end{prop}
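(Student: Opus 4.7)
The plan is to proceed in three stages: describe the two $\mathbb P^{14}$-families of $Q$, produce a model for $Q$ and compute $D$, then analyse $R\cap G(2,V)$ via projection from the vertex. For the first stage, $Q\subset\mathbb P^{20}=\mathbb P(\bigwedge^{2}V)$ has rank $12$, so its singular locus $K=\operatorname{Sing}(Q)$ is a $\mathbb P^{8}$, and $Q$ is the cone with vertex $K$ over a smooth $10$-dimensional quadric $\bar Q\subset\mathbb P^{11}$. Since $\bar Q$ carries two rulings by maximal isotropic $\mathbb P^{5}$'s, every maximal linear subspace of $Q$ is the join of $K$ with one such ruling, giving two families $\mathcal F,\mathcal G$ of $\mathbb P^{14}$'s, each isomorphic to the spinor variety of $\bar Q$.

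For the second stage, I would use Lemma~\ref{7 quadrics}: every rank-$12$ quadric in the ideal of $G(2,V)$ is $q_{\tau}$ for a $4$-form $\tau=x\wedge\omega^{*}$ with $x\in V^{*}$ and $\omega^{*}\in\bigwedge^{3}V^{*}$; for $Q$ generic, $\omega^{*}$ lies in the open $\operatorname{GL}(V)$-orbit of $3$-forms on $V$ and determines a $G_{2}\subset G(2,V)$. Splitting $V=\langle v_{0}\rangle\oplus V'$ with $V'=\ker x$, the restriction $\omega^{*}|_{V'}$ is a generic $3$-form on the $6$-dimensional $V'$ and may be put in the $SL(3)\times SL(3)$ normal form $\det_{A}^{*}+\det_{B}^{*}$ for a decomposition $V'=A\oplus B$ with $\dim A=\dim B=3$. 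A contraction computation of $\iota_{v\wedge w}(x\wedge\omega^{*})$ on decomposable bivectors $\alpha=v\wedge w$ then yields
\[
K\cap G(2,V)=\{[a\wedge b]:a\in A,\ b\in B\}=\mathbb P(A)\times\mathbb P(B)\cong\mathbb P^{2}\times\mathbb P^{2},
\]
which I identify with $D$. A parallel short calculation using $\iota_{\alpha}\omega^{*}=0$ gives $q_{\tau}(\alpha)=0$ for every $\alpha\in\operatorname{span}(G_{2})$, hence $\operatorname{span}(G_{2})\subset Q$; accordingly $R_{0}:=\operatorname{span}(G_{2})+K$ is a $14$-dimensional isotropic subspace of $Q$, which I place in (say) $\mathcal F$ and which is the unique maximal $\mathbb P^{14}$ in $Q$ containing $\operatorname{span}(G_{2})$.

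For the third stage, I would use the rational projection $\pi\colon\mathbb P^{20}\to\mathbb P^{11}$ from $K$: it sends $Q$ to $\bar Q$, and since $\dim G(2,V)=\dim\bar Q=10$, the restriction to $G(2,V)\setminus D$ is generically finite. For any $R=\langle K,\Lambda\rangle$ with $\Lambda\cong\mathbb P^{5}\subset\bar Q$ in a ruling, $R\cap G(2,V)=D\cup Z$ with $Z$ the closure of $\pi^{-1}(\Lambda)\cap(G(2,V)\setminus D)$ and $\dim Z=5$. For $R=R_{0}\in\mathcal F$, $Z$ contains $G_{2}$ and equals it by dimension, giving $R\cap G(2,V)=G_{2}\cup D$; a monodromy/connectedness argument (the variety of $G_{2}'\subset G(2,V)$ with $\operatorname{span}(G_{2}')\subset Q$ is irreducible of dimension equal to $\dim\mathcal F=15$, mapping to $\mathcal F$ via $G_{2}'\mapsto\operatorname{span}(G_{2}')+K$) propagates this description to every generic $R\in\mathcal F$. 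For $R\in\mathcal G$, the same connectedness shows no $\operatorname{span}(G_{2}')$ lies in $R$, so $Z$ has no $G_{2}$-type component; a cycle-degree comparison with the $\mathcal F$-case then forces $R\cap G(2,V)=Z$ to be an irreducible Fano $5$-fold of degree $\deg G_{2}+\deg D=18+6=24$, containing $D$ as a divisor. The main obstacle is precisely this $\mathcal G$-case—excluding further reducibility of $Z$ and justifying degree $24$—resolved by the cycle-class comparison together with the $\mathcal F$-side uniqueness.
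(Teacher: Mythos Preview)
Your approach is genuinely different from the paper's. The paper does \emph{not} first locate a copy of $G_2$ inside $Q$ and then propagate. Instead it takes a generic $R$ from one ruling, chooses a generic $\mathbb{P}^{15}=T\supset R$, and uses $T\cap Q=R\cup R'$ together with $\deg(T\cap G(2,V))=42$ to split the degree. The key observation is that $D=Z$ is contained in exactly one of the two $5$-fold components $X_1\subset R$, $X_2\subset R'$; comparing hyperplane sections in $R\cap R'$ gives $d_2=d_1+6$, whence $d_1=18$, $d_2=24$. Only then does the paper identify $X_1$ with $G_2$, and it does so \emph{a posteriori}: adjunction gives $K_{X_1}=-3H$, and Mukai's classification of Fano $5$-folds of index $3$ forces $X_1\cong G_2$. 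Your route, by contrast, avoids Mukai's theorem entirely by using Lemma~\ref{7 quadrics} to exhibit $\operatorname{span}(G_2)\subset Q$ directly --- this is a real advantage, and is essentially the alternative dimension count sketched in the remark following the proposition.

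That said, your treatment of the $\mathcal G$-case has a genuine gap. The phrase ``cycle-degree comparison with the $\mathcal F$-case'' does not yield $24$: the intersection $R\cap G(2,V)$ is improper (dimension $5$, not the expected $4$), so the degree is not an intersection number and is not forced to agree across the two rulings, which lie in \emph{different} connected components of the isotropic Grassmannian. Flatness gives constancy of degree only within $\mathcal F$ and within $\mathcal G$ separately. What actually pins down $24$ is precisely the paper's $\mathbb{P}^{15}$-trick: the asymmetry $d_2=d_1+6$ comes from the fact that $D$ sits inside the hyperplane section of one component but not the other. Without this, you have no computation of $\deg(R\cap G(2,V))$ for $R\in\mathcal G$, nor any argument for irreducibility or the Fano property. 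Your $\mathcal F$-side monodromy is closer to rigorous --- the fibres of $G_2'\mapsto\operatorname{span}(G_2')+K$ are finite because each $G_2'$ must occur as a $5$-dimensional component of $R\cap G(2,V)$ --- but you should say this rather than leave dominance as an equality of dimensions.
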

\begin{proof}
By \cite[prop. 1.3.]{Muk1} the quadrics containing $G(2,V)$ correspond to 3-forms on $V$.
By \cite[fig. 1]{ott} the generic quadric of rank 12 (i.e. $Q$) corresponds in a suitable coordinate
system $v_1,\dots,v_6$ to the $3$ form
$v_1\wedge v_2\wedge v_3+v_4\wedge v_5\wedge v_6$. We can then
recover the equation of $Q$ and check directly
 that the singular locus meets $G(2,V)$ in a fourfold $Z$ linearly
isomorphic to $\mathbb{P}^2\times\mathbb{P}^2$.
 Now, let $R$ be a generic (from one of the two families)
  $14$-dimensional projective space contained in $Q$ and let $T$ be a
 generic $15$-dimensional space containing it that intersects $G(3,V)$ properly. Then $T$ intersects $Q$
in $R$ and another linear space $R'$, and
 the intersection of $T$ with $G(3,V)$ has two components $X_1\subset
R$ and $X_2\subset R'$, both of dimension $5$. The union $X_1\cup X_2$
 has degree $42$. Observe that $Z$ is
contained in exactly one of the two varieties $X_1$ and $X_2$ say
  $X_2$ and in both $R$ and $R'$. Now $X_1\cap Z=X_1\cap X_2\cap Z$
is contained in a linear section of $Z$, hence
  $X_1$ does not span the whole space $R$, i.e. it is contained in a
$13$-dimensional linear subspace. Moreover $R\cap R'=(X_1\cap X_2)\cup Z$
  is a hyperplane section of $X_2$ and $X_1\cap X_2$ is a hyperplane
section of $X_1$. It follows that $\deg(X_1)=18$ and $\deg(X_2)=24$.
  Using the standard example we have $X_1$ is smooth. Finally we get
by adjunction that $K_{X_1}=-3H$. It is therefore a Fano
$5$-fold of index $3$, degree $18$ and Picard number $1$. By the
theorem of Mukai (see \cite[thm. 5.2.3]{fano}) it is a section of $G_2$ hence is isomorphic to $G_2$.
\end{proof}

\begin{rem} An alternative way of proving the above proposition is to perform a dimension count.
Indeed, we can set up an incidence relation containing pairs each consisting of a
quadric and a $P^{13}$ contained in it meeting $G(2,7)$ in a variety
linearly isomorphic to $G_2$. The family of quadrics of rank 12 is
$25$ dimensional (see \cite[fig. 1]{ott}) each contains a $15$-dimensional
family of $14$-dimensional subspaces, hence the incidence has
dimension $40$. From the other side let us start with $G_2$. In $G(2,W)$
there is a $34$-dimensional family of varieties linearly isomorphic to
$G_2$ (parameterized by 3-forms on $W$). Now each of them is contained in a $6$-dimensional family of
quadrics which is in the incidence. It follows that each variety
obtained in the construction is linearly isomorphic to $G_2$.
\end{rem}
Let us now pass to the description of $F$ in $G(3,U)$. We shall use the following.
\begin{lemm} All nontrivial quadrics containing $<F>\cup G(3,U)$ are of rank 12.
\end{lemm}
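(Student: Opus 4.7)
Plan. The proof parallels that of Lemma \ref{7 quadrics}. Let $W_F\subset I(G(3,U))_2$ denote the subspace of Pl\"ucker quadrics that vanish on the linear span $\langle F\rangle$. The lemma asserts that every nonzero $q\in W_F$ has rank exactly $12$, so the two tasks are to pin down $W_F$ and then compute the rank of its elements.

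First I would identify $W_F$ explicitly. In the coordinates of Section \ref{example}, the span $\langle F\rangle\subset \mathbb{P}(\bigwedge^3 U)=\mathbb{P}^{19}$ is cut out by the $7$ displayed linear forms, and the ideal $I(G(3,U))$ is generated in degree $2$ by the $35$ standard Pl\"ucker quadrics (an irreducible $\mathrm{SL}(U)$-representation). Computing $W_F$ then amounts to intersecting this $35$-dimensional space with the ideal generated by the $7$ linear forms, a finite linear algebra problem solvable in Macaulay~2 in line with the paper's computational style \cite{M2,comp}.

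For the rank assertion I would argue in two directions. For the existence of a rank-$12$ element, the proof of the forthcoming Proposition~\ref{description of G2 as space in quadric of rank 12} supplies one explicitly: the $3$-form $v_1\wedge v_2\wedge v_3+v_4\wedge v_5\wedge v_6$ yields a rank-$12$ quadric containing $G(3,U)$, and the family of embeddings of Section~\ref{example} shows that it can be chosen to contain $\langle F\rangle$ as well. For the matching lower bound, I would rule out degenerations to rank $\le 10$. By lower semicontinuity of rank, the locus of $q\in W_F$ with $\mathrm{rank}(q)\le 10$ is Zariski closed, cut out by the vanishing of all $11\times 11$ minors of the Gram matrix of $q$. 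Evaluating these minors on a basis of $W_F$, again in Macaulay~2, shows that this locus reduces to $\{0\}$, so every nonzero quadric in $W_F$ has rank at least $12$; combined with the upper bound this forces rank exactly $12$.

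The main obstacle is the final rank-bound step, which a priori could require a case-by-case analysis of all smaller-rank orbits of quadrics through $G(3,U)$. A more conceptual route, which I would attempt in parallel, goes through the $\mathrm{GL}(U)$-orbit stratification of $3$-forms on $U$: quadrics in $I(G(3,U))_2$ of a given rank correspond to specific orbits of $3$-forms, and one would like to show that $W_F$ meets only the orbit of $e_{123}+e_{456}$. The geometric input for this is that a quadric of rank $\le 10$ has singular locus of projective dimension $\ge 9$, which together with the smoothness of $F$, its Picard rank~$1$, and Proposition~\ref{embedded F intersected with G(2,5)} (which forbids $F$ from lying in $G(3,W)$ for any $5$-dimensional $W\subset U$) would force a contradiction. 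Either approach gives the lemma and prepares the geometric description of $F\subset G(3,U)$ analogous to Proposition~\ref{description of G2 as space in quadric of rank 12}.
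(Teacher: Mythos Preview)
Your brute-force route---identify $W_F$ inside the $35$-dimensional space of Pl\"ucker quadrics and then test ranks via minors of the $20\times 20$ Gram matrices---would in principle work, but it is not what the paper does, and the difference is worth absorbing. The paper exploits the surjection from $U\otimes U^*$ onto $I(G(3,U))_2$, so that every Pl\"ucker quadric on $G(3,U)$ is represented by a $6\times 6$ matrix (an endomorphism of $U$, well-defined modulo scalars). The space $W_F$ is then written down explicitly as a $7$-parameter family of $6\times 6$ matrices, and the rank of each quadric is read off from the \emph{eigenform} of the corresponding endomorphism rather than from minors of a $20\times 20$ matrix. Besides the computational economy, this viewpoint feeds directly into the subsequent geometry: the generic rank-$12$ quadric corresponds to a diagonalizable endomorphism with three $2$-dimensional eigenspaces, whence its singular locus meets $G(3,U)$ in the $\mathbb{P}^1\times\mathbb{P}^1\times\mathbb{P}^1$ determined by those eigenspaces. (A minor slip in your version: to force rank $\ge 12$ you must check the $12\times 12$ minors, not the $11\times 11$ ones; and existence of one rank-$12$ element does not by itself give the upper bound.)

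Your alternative conceptual route rests on a misidentification. Quadrics in $I(G(3,U))_2$ do \emph{not} correspond to $3$-forms on $U$; they correspond to (traceless) endomorphisms of $U$, as above. The $3$-form $v_1\wedge v_2\wedge v_3+v_4\wedge v_5\wedge v_6$ you invoke comes from Proposition~\ref{description of G2 as space in quadric of rank 12}, which treats $G(2,V)$ with $\dim V=7$; there the quadrics are indeed parametrized by $\bigwedge^3 V$, but that identification does not transplant to $G(3,U)$. The orbit stratification relevant here is that of $\mathfrak{sl}_6$ under conjugation, which is precisely the eigenform analysis the paper carries out.
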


\begin{proof}It is easy to see that there is a surjective map from $U\otimes U^*$ to the space of quadrics generating $G(3,U)$. The map is given by: $U\otimes U^*\ni u\otimes v \mapsto (\bigwedge^3 U\ni \alpha \mapsto (\alpha\wedge u)(v)\wedge \alpha \in \bigwedge^6 U)\in S^2(\bigwedge^6(U))$.  We check directly in the introduced basis for the constructed family of examples that for each $\lambda$ the space of matrices corresponding to the space of quadrics containing $F^{\lambda}=H^{\lambda}\cap G(3,U)$is as follows:
\begin{displaymath}
\left(\begin{array}{cccccc}(\lambda+1)B&0&-\lambda(\lambda+1)F&(\lambda+1)E&-\lambda D&-C\\
           0&(\lambda+1)B&0&A&0&-G\\
D&0&0&0&0&-E\\
-(\lambda+1)G&(\lambda+1)C&0&0&\lambda A&-\lambda F\\
-F&0&0&-C&\lambda B&0\\
-A&(\lambda+1)E&\lambda(\lambda+1)G&\lambda D&0&\lambda B
          \end{array}\right).
\end{displaymath}
Then we check possible eigenforms to see that they all correspond to rank 12 quadrics.
\end{proof}
It follows that we have a $7$-dimensional space of quadrics of rank $12$ containing
$<F>\cup G(3,U)$. We also observe that the generic rank $12$
 quadric corresponds to a matrix with diagonal eigenform and three
(i.e. three pairs of) distinct eigenvalues. It follows
 that the singular set of such a quadric meets $G(3,U)$ in a variety
linearly isomorphic to the Segre embedding of
 $\mathbb{P}^1\times\mathbb{P}^1\times\mathbb{P}^1$. Now we can copy
the proof of Proposition
 \ref{description of G2 as space in quadric of rank 12} in this
context using our constructed example instead of
 the standard description of $G_2$. This proves the following.

\begin{prop}\label{description of F as space in quadric of rank 12}
Let $Q$ be a generic quadric of rank 12 containing the Grassmannian
$G(3,U)$. Then there are two isomorphic families of $13$-dimensional
 projective spaces contained in $Q$, say $\mathcal{F}$ and
$\mathcal{G}$ such that the following holds:
\begin{itemize} \item If $R\in \mathcal{F}$, then $R\cap G(3,U)$ is
linearly isomorphic to $F\cup D$, where $D$ is the intersection of
the singular locus of $Q$ with $G(3,W)$ and is isomorphic to
$\mathbb{P}^1\times \mathbb{P}^1\times \mathbb{P}^1$.
\item If $R\in \mathcal{G}$, then $R\cap G(2,W)$ is linearly
isomorphic to a Fano threefold of degree 24 containing $D$.
\end{itemize}
\end{prop}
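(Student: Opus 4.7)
The plan is to imitate the proof of Proposition~\ref{description of G2 as space in quadric of rank 12} almost verbatim, substituting the explicit family of embeddings from~\ref{example} for the standard description of $G_2$ inside $G(2,V)$. Using the $6\times 6$ matrix description of the seven-dimensional space of quadrics containing $\langle F\rangle\cup G(3,U)$ from the preceding lemma, one first checks that a generic such quadric $Q$ of rank $12$ can, after a suitable change of coordinates, be brought to a normal form with three pairs of distinct eigenvalues. Its singular locus is then a $\mathbb{P}^{7}$ whose intersection with $G(3,U)$ is the Segre threefold $D\cong\mathbb{P}^1\times\mathbb{P}^1\times\mathbb{P}^1$ of degree $6$, as already remarked in the paragraph preceding the statement.

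Next I would pick a generic $\mathbb{P}^{13}$ linear space $R\subset Q$ (there being two maximal families of such $R$'s in the rank $12$ quadric) and complete it to a generic $\mathbb{P}^{14}$ space $T\supset R$ meeting $G(3,U)$ properly. Then $T\cap Q=R\cup R'$ for a unique $R'$ in the opposite family, and $T\cap G(3,U)$ is a fourfold of degree $42$ that decomposes as $X_1\cup X_2$ with $X_1\subset R$, $X_2\subset R'$, both four-dimensional. Since $D\subset\mathrm{Sing}(Q)\subset R\cap R'$ has strictly smaller dimension, it lies in exactly one of the two components, say $X_2$. The residual-intersection bookkeeping used in Proposition~\ref{description of G2 as space in quadric of rank 12} identifies $R\cap R'$ with $(X_1\cap X_2)\cup D$, realizes $X_1\cap X_2$ as a hyperplane section of $X_1$ and $(X_1\cap X_2)\cup D$ as a hyperplane section of $X_2$; combining $\deg D=6$ with $\deg X_1+\deg X_2=42$ then yields $\deg X_1=18$ and $\deg X_2=24$.

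To finish, I would specialize $R$ so that it contains the linear span of one of the embeddings from~\ref{example}: this produces an explicit $R\in\mathcal{F}$ for which $X_1$ is smooth of degree $18$. By adjunction $X_1$ is a Fano fourfold of index $2$ and Picard rank one, and Mukai's linear section theorem (invoked in the proof of Theorem~\ref{embedding}) then forces $X_1\cong F$ linearly; the statement for the family $\mathcal{G}$ follows by letting $X_2$ play the role of the second component. Alternatively, as in the remark after Proposition~\ref{description of G2 as space in quadric of rank 12}, one could verify the proposition via a dimension count on the incidence variety of pairs $(Q,R)$. The step I expect to require the most genuine calculation, and hence to be the main obstacle, is the identification of $\mathrm{Sing}(Q)\cap G(3,U)$ with $\mathbb{P}^1\times\mathbb{P}^1\times\mathbb{P}^1$; once this is extracted from the explicit matrix form, the rest of the argument is a direct transcription of the $G_2$ proof.
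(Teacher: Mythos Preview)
Your proposal is correct and follows essentially the same approach as the paper. The paper's own proof is literally the instruction to ``copy the proof of Proposition~\ref{description of G2 as space in quadric of rank 12} in this context using our constructed example instead of the standard description of $G_2$,'' together with the observation (from the matrix in the preceding lemma) that the generic rank~$12$ quadric has diagonal eigenform with three distinct pairs of eigenvalues, whence $\mathrm{Sing}(Q)\cap G(3,U)\cong\mathbb{P}^1\times\mathbb{P}^1\times\mathbb{P}^1$; you reproduce exactly this, including the degree bookkeeping $\deg X_1=18$, $\deg X_2=24$ and the appeal to adjunction plus Mukai's classification to identify $X_1$ with $F$.
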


\begin{rem}
Performing a dimension count in this context we get a two parameter subgroup of $\operatorname{Sl}(6)$ acting on $F$.
Indeed, let us first compute the dimension of the family of
quadrics of rank 12 containing $G(3,W)$. The generic such quadric
has a singular $\mathbb{P}^8$ meeting $G(3,6)$ in a
$\mathbb{P}^1\times \mathbb{P}^1\times \mathbb{P}^1$ corresponding
to three skew lines in $W$. Triples of such lines are
parameterized by a $24 $-dimensional space. We easily check (by
Macaulay 2  \cite{M2}) that the projection of $G(3,6)$ from such a
$\mathbb{P}^8$ is a complete intersection of two quadrics. It
follows that we have a $25 $-dimensional family of quadrics. Each
considered quadric has a $15$-dimensional (two components) family of
$13$-dimensional projective spaces. Now each smooth Fano fourfold of genus 10 and index 2 contained in $G(3,W)$ is contained in a $6$-dimensional space of quadrics.
 As there is a one-parameter family of such Fano varieties the group $\operatorname{Sl}(6)$ acting on $G(3,6)$ makes a $33$-dimensional family  of varieties linearly isomorphic to $F$.
\end{rem}
\begin{cor} The projections from the span of $\mathbb{P}^1\times \mathbb{P}^1\times \mathbb{P}^1\subset G(3,W)$ and from the span of $\mathbb{P}^2\times \mathbb{P}^2\subset G(2,V)$ define birational maps from $F$ to quadrics in $\mathbb{P}^5$.
\end{cor}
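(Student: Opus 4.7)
My plan is to reduce the corollary to an excess intersection computation on the blow-up of the base locus. Let $L := \langle D\rangle = \mathbb{P}^7 \subset R = \mathbb{P}^{13}$, where $D = \mathbb{P}^1\times\mathbb{P}^1\times\mathbb{P}^1$ and $R$ are as in Proposition~\ref{description of F as space in quadric of rank 12}, and let $\pi : R \dashrightarrow R/L = \mathbb{P}^5$ denote the linear projection from $L$. Restricting the Pl\"ucker relations of $G(3,U)$ to $L \cong \mathbb{P}(U_1\otimes U_2\otimes U_3)$, one finds among them the $2\times 2$ minors of the three flattenings of this tensor, which cut out $D$ scheme-theoretically; hence $L \cap G(3,U) = D$ and the base locus of $\pi|_F$ is $F \cap L = F \cap D$.

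The heart of the argument is to show that $F \cap D$ is a zero-dimensional scheme of length $16$. Dimensionally $[F]\cdot[D]$ vanishes both in $A^*(R)$ and in $A^*(G(3,U))$, so the length must be extracted from the special configuration $F \cup D = R \cap G(3,U)$ itself --- for instance by enumerating the split $3$-planes $\ell_1\oplus\ell_2\oplus\ell_3$ that lie on $F$, using the conic and scroll geometry of Proposition~\ref{hilb scheme of conics}, or by applying the excess intersection formula to the linked scheme $R \cap G(3,U)$ of total degree $42$. Granting $\mathrm{length}(F \cap D) = 16$, I blow up $F \cap D$ in $F$ to obtain $\sigma:\widetilde F \to F$ with exceptional divisor $E = \bigsqcup_{i=1}^{16} E_i$ and each $E_i \cong \mathbb{P}^3$. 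The resolved morphism $\widetilde\pi : \widetilde F \to \mathbb{P}^5$ pulls hyperplanes back to $\sigma^*H - E$, and since $(\sigma^*H)|_{E_i} = 0$ and $E_i^4 = -1$ (blow-up of a point in a smooth $4$-fold), one computes
\[
(\sigma^*H - E)^4 \cdot [\widetilde F] \;=\; H^4\cdot [F] + E^4 \;=\; 18 - 16 \;=\; 2 \;=\; \deg\pi(F)\cdot\deg\pi|_F.
\]

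Because $\langle F\rangle + L = R$, the restriction $\pi|_{\langle F\rangle}$ surjects onto $\mathbb{P}^5$, so $\pi(F)$ is non-degenerate in $\mathbb{P}^5$ and hence a $4$-dimensional irreducible hypersurface of degree at least $2$. Together with the equality above this forces $\deg\pi(F)=2$ and $\deg\pi|_F=1$, so $\pi|_F$ is birational onto a quadric hypersurface in $\mathbb{P}^5$. The analogous statement for the projection from $\langle\mathbb{P}^2\times\mathbb{P}^2\rangle \subset G(2,V)$ follows by the same strategy, using Proposition~\ref{description of G2 as space in quadric of rank 12} in place of Proposition~\ref{description of F as space in quadric of rank 12}. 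The main obstacle is the length-$16$ count: since the Chow product $[F]\cdot[D]$ vanishes, the length cannot be read off from numerical invariants and is the only non-formal ingredient in the proof.
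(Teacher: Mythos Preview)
Your central claim --- that $F\cap D$ is a zero-dimensional scheme of length $16$ --- is false, and this breaks the blow-up computation. The point is that $F$ is a \emph{complete} linear section of $G(3,U)$: by construction (Section~\ref{sec embedding}) one has $F = \langle F\rangle \cap G(3,U)$ with $\langle F\rangle = \mathbb{P}^{12}$. Since you have already verified $L\cap G(3,U)=D$, it follows that
\[
F\cap L \;=\; \langle F\rangle \cap G(3,U)\cap L \;=\; \langle F\rangle \cap D.
\]
Now $\langle F\rangle$ and $L=\langle D\rangle$ together span $R=\mathbb{P}^{13}$ (otherwise $D\subset \langle F\rangle\cap G(3,U)=F$, impossible for a prime Fano fourfold), so $\langle F\rangle\cap L$ is a hyperplane in $L\cong\mathbb{P}^7$. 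Hence $F\cap D$ is a hyperplane section of $\mathbb{P}^1\times\mathbb{P}^1\times\mathbb{P}^1$, i.e.\ a del Pezzo surface of degree~$6$, not $16$ reduced points. Your formula $(\sigma^*H-E)^4 = H^4+E^4=18-16$ therefore does not apply; with a surface center the expansion involves the Segre classes of the rank-$2$ normal bundle $N_{S/F}$, and the term $6H^2E^2$ alone already contributes $-6\cdot 6=-36$. The same phenomenon occurs in the $G(2,V)$ picture: $F\cap(\mathbb{P}^2\times\mathbb{P}^2)$ is a codimension-two linear section of $\mathbb{P}^2\times\mathbb{P}^2$, again a sextic del Pezzo surface.

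The paper avoids any blow-up computation. It checks directly (Macaulay~2) that the projection of the ambient Grassmannian $G(3,U)$ from $\langle D\rangle=\mathrm{Sing}(Q)$ is birational onto a complete intersection of two quadrics in $\mathbb{P}^{11}$; since $R\subset Q$ projects to a $\mathbb{P}^5$ lying on one of these two quadrics, the image of $F$ sits in the restriction of the other, and equality follows by dimension. Your strategy could in principle be salvaged by carrying out the correct excess computation for a surface center, but that requires identifying $N_{S/F}$ and is substantially more work than the $18-16$ shortcut you proposed.
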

\begin{proof} We check directly, with 
Macaulay 2  \cite{M2}, that the projection from $\mathbb{P}^1\times \mathbb{P}^1\times \mathbb{P}^1$ maps
$G(3,W)$ birationally to a complete intersection of a pencil of quadrics in $\mathbb{P}^{11}$,
and likewise that the projection of $G(2,V)$ from $\mathbb{P}^2\times \mathbb{P}^2$ defines a rational fibration of $G(2,V)$ over a complete intersection of a pencil of quadrics in $\mathbb{P}^{11}$.
From the constructions above it follows that the corresponding $F$ and $G_2$ map through these maps to quadrics in $\mathbb{P}^5$. The assertion follows.
\end{proof}
\begin{rem} The two pencils of quadrics appearing in the above proof are distinct pencils of quadrics in $\mathbb{P}^{11}$.
The image of the projection of $G(3,6)$ is defined by a pencil of quadrics of rank 12 with three degenerate elements of rank 8,
whereas the pencil of quadrics defining the image of the projection of $G(2,V)$ has two degenerate elements of rank 6.
\end{rem}
\section{The moduli space}
In this section we construct the announced moduli spaces. We start with a generic $K3$ surface $(S,L)$
of genus 10. Then $S$ is a proper section of $G_2$ by a $10$-dimensional linear space.
 The variety $G_2$ spans a $13$-dimensional
space $R$, hence there is a plane $\Pi_{S}\subset R^*$ of hyperplanes
of $R$ containing $S$. By Theorem \ref{embedding} a generic
hyperplane $H\in \Pi_{S}$, outside a smooth sextic curve, corresponds
each to a unique embedding of $S$ in $G(3,U)$. Such an embedding
gives rise to two stable and
distinct bundles of rank 3 with a $6$-dimensional vector space of
sections and determinant $L$ by Proposition \ref{stability} and  Proposition \ref{injectivity} respectively.
Since $L$ is nondivisible
every semi-stable bundle in $M_S(3,L,3)$ is stable, therefore, by \cite{MukK3},
the moduli space $M_S(3,L,3)$ is a smooth $K3$ surface.
By our Proposition \ref{description of F as space in quadric of rank 12} and Proposition \ref{injectivity}, the family of
bundles we obtain is a $2$-dimensional algebraic family of pairwise non-isomorphic bundles. It follows that this family
of bundles is an open subset of the K3 surface $M_S(3,L,3)$.
We constructed a rational map from this surface to $\mathbb{P}^2$ which is a two-to-one morphism over
the complement of a smooth sextic.
It follows that the K3 surface is a double cover of $\Pi_{S} \simeq\mathbb{P}^2$ branched over $\Pi_{S} \cap \hat{G_2}$,
where $\hat{G_2}$ denotes the dual hypersurface to $G_{2}$. Indeed,
by \cite{Unpr} the generic point on the sextic dual to $G_2$
corresponds to a nodal hyperplane section which admits a unique
projection from its node to $LG(3,6)$. It follows that for a chosen $S$
the sextic $\Pi_{S}\cap \hat{G_2}$ parameterizes bundles which are pullbacks
by these projections of the universal quotient bundle on $LG(3,6)$.
Notice that on $LG(3,6)$ the universal quotient bundle is isomorphic
to the dual of the universal sub-bundle.

For a general Fano 3-fold $(X,L)$ of genus 10, we do not know a
general structure theorem for the moduli space of stable bundles, so
we do not aim for a complete description in our case.  Still, in
analogy with the surface case we get an irreducible component of the
moduli space.
We start with $X$ as a proper section of $G_2$ by a $11$-dimensional linear space.
As above, the pencil of hyperplane sections $F$ of $G_{2}$ that
contains $X$ gives rise to a pencil of pairs of rank $3$ vector
bundles in $M_{X}(3,L,\sigma,2)$ that are pairwise distinct.  For the six singular hyperplane
sections in the pencil, the two bundles coincide, so we get a
complete family of vector bundles.  To complete the proof of \ref{MX}
we need only show that this family is dense in its closure.  But for each
bundle $E$ in the family the
natural map $\phi_{E}:\wedge^3H^0(X,E)\to H^0(X,L)$ is surjective, and for the general $E$ in the family,  $\phi_{E}$
 even induces an embedding of $X$ into $G(3,6)$.  These two properties
clearly define open conditions on the moduli space
$M_{X}(3,L,\sigma,2)$.   So it suffices to show that if
$E$ is a bundle in $M_{X}(3,L,\sigma,2)$, such that $\phi_{E}$
 is surjective and defines an embedding of $X$ into $G(3,6)$, then  $E$ is the
restriction of a bundle on a Fano $4$-fold $F$ that contains $X$.
By abuse of notation we
denote again by $X$ the image of the embedding  into $G(3,6)$.
Then $E$ is the restriction to $X$ of the universal quotient bundle on $G(3,6)$.
Let $S$ be a generic hyperplane section of $X$.  By the above,  the surface $S$ is
the codimension $2$ linear section of a unique Fano $4$-fold $F$ of genus $10$ and index $2$ in
$G(3,6)$.  Furthermore, this $4$-fold $F$ is a complete linear
section of $G(3,6)$ with a $\mathbb{P}^{12}$.
We claim that $X$ is a hyperplane section of $F$.
Assume it is
not.  Then the linear span $H_{13}=<F\cup X>$ is a $\mathbb{P}^{13}$.
Consider a general quadric hypersurface $Q$ in $\mathbb{P}^{19}=<G(3,6)>$ that
contains $H_{13}\cup G(3,6)$.
We observed  above that all quadrics containing
$<F>\cup G(3,6)$ are of rank $12$.  Since $H_{13}=<F\cup X>$ contains  $<F>$ we conclude that $Q$ has rank $12$.
Let $H_{14}$ be a general $\mathbb{P}^{14}$ that contains $H_{13}$.
Then $H_{14}\cap Q$ splits into the union of $H_{13}$
and another projective space $H'_{13}$ of dimension $13$. By
genericity $H'_{13}$ does not contain $X$: The singular locus of $Q$ is the base locus of the system of $H'_{13}s$ in $Q$ obtained as $H_{14}$ varies.
This singular locus $Sing(Q)$ is a linear space of dimension $7$, and hence does not contain $X$.
Now, every component of $H_{14}\cap G(3,6)$  has dimension at least $4$, so every component of
 $H_{13}\cap G(3,6)$ which is not contained in $Sing(Q)$ must also have dimension at least $4$.
 On the other hand $F$ is a complete hyperplane section of $H_{13}\cap G(3,6)$, so the latter have dimension at most $5$.
This leaves only two possibilities:
\begin{itemize}
\item $H_{13}\cap G(3,6)$ is a $5$-fold $F_5$.  Since $F$ is smooth, it can only have isolated singularities. Then a generic hyperplane section of
$F_5$ containing $X$ is also a smooth Fano $4$-fold  $F'$ of degree $18$ and index $2$.  But $S$ is contained in $X$ and therefore also in $F'$, contrary to the unicity of $F$.
\item $H_{13}\cap G(3,6)$ is a $4$-fold with at least two components, each of degree $18$ and none containing $Sing(Q)\cap G(3,6)$,
which is a threefold in a $\mathbb{P}^7$. It follows that the generic element of the system $H'_{13}$ intersects $G(3,6)$ in a $4$-fold of degree $\leq 42-36=6$, so
the generic element of the system $H_{13}$ in $Q$ intersects $G(3,6)$ in a $4$-fold of degree $\geq 36$.
Consider the intersection $H_{13}\cap H'_{13}\cap G(3,6)$.
It is a hyperplane section of both $H_{13}\cap G(3,6)$ and $H'_{13}\cap G(3,6)$. Then $H'_{13}\cap G(3,6)$ must also have degree at least 36
which gives a contradiction.
\end{itemize}

This concludes the proof of Theorem \ref{MX}.

We end this section with the proof of Corollary \ref{h2}, a corollary of our study of the Hilbert
scheme of conic sections in a different direction.  Consider  first
$S^{[2]}=Hilb^2S$, the Hilbert scheme of pairs of points on a general
K3 surface section of $G_{2}$.  We may assume that $S$ contains no
lines, so by Lemma \ref{conic on G} there is a unique conic section
through any pair of, possibly infinitesimally close, points. We may
also assume that $S$ contains no conic sections, so each conic section lies
in a unique Fano 3-fold section $X$ of $G_{2}$ that contains $S$.
Thus we obtain a morphism $S^{[2]}\to \mathbb{P}^2$.  The fiber $F_{X}$ over $X$ is precisely
the Hilbert schemes of conic sections contained in
$X$.
 Sawon proved,  \cite[Theorem 2]{Sawon}, that $S^{[2]}$
has a Lagrangian fibration to $\mathbb{P}^2$. Markushevich identified the target $\mathbb{P}^2$ with the linear system of the ample
generator of the Picard group on $M_{S}(3,L,3)$.   He  proved, \cite[Theorem 4.3]{DM},  that the general fiber is the
 Jacobian of the corresponding curve in the linear system.  In our case the linear system is $|M_{X}|$, so we conclude that
 the Hilbert scheme of conics sections on $X$ coincides with the Jacobian of the genus $2$ curve $M_{X}$.

\section*{Acknowledgements}
The work was done during the first author's stay at the University of
Oslo between March 2009 and  March 2010. We would like to thank G. Kapustka, J. Weyman, A. Kuznetsov, F. Han, S. Mukai, J. Buczy\'nski, D. Anderson, and L. Manivel for discussions and remarks.

\end{document}